\numberwithin{equation}{section}
\newcommand\ackname{Acknowledgements}
  \newenvironment{acknowledgements}{%
      \titlepage
      \null\vfil
      \@beginparpenalty\@lowpenalty
      \begin{center}%
        \bfseries \ackname
        \@endparpenalty\@M
      \end{center}}%
     {\par\vfil\null\endtitlepage}
\newcommand{\bfi}{\begin{fig}}
\newcommand{\efi}{\end{fig}}
\newcommand{\btab}{\begin{tab}}
\newcommand{\etab}{\end{tab}}
\newcommand{\barr}{\begin{array}}
\newcommand{\earr}{\end{array}}
\newcommand{\beqq}{\begin{equation}}
\newcommand{\eeqq}{\end{equation}}
\newcommand{\beao}{\begin{align*}}
\newcommand{\eeao}{\end{align*}\noindent}
\newcommand{\beam}{\begin{eqnarray}}
\newcommand{\eeam}{\end{eqnarray}\noindent}
\newcommand{\bdis}{\begin{displaymath}}
\newcommand{\edis}{\end{displaymath}\noindent}
\newcommand{\bbn}{\mathbb{N}}
\newcommand{\bbr}{\mathbb{R}}
\newcommand{\calS}{\mathcal{S}}
\newcommand{\bfS}{\mathbf{S}}
\newcommand{\eps}{{\epsilon}}
\newcommand{\si}{{\sigma}}
\newcommand{\Om}{{\Omega}}
\newtheorem{Satz}{Satz}[section]
\newtheorem{Theorem}[Satz]{Theorem}
\newtheorem{Korollar}[Satz]{Corollary}
\newtheorem{Proposition}[Satz]{Proposition}
\newtheorem{Lemma}[Satz]{Lemma}
\theoremstyle{definition}
\newtheorem{Definition}[Satz]{Definition}
\newtheorem{Annahme}[Satz]{Assumption}
\newtheorem{Bemerkung}[Satz]{Remark}
\theoremstyle{remark}
\begin{document}

\pagenumbering{arabic}
\setcounter{page}{1}

\title{Volume preserving curvature flows in Lorentzian manifolds}
\author{Matthias Makowski}
\date{April 12, 2011}
\subjclass[2010]{35K55, 35K93, 53C44,  53C50}
\keywords{curvature flows, lorentz manifolds, volume, foliation}
\address{Matthias Makowski, Universit\"at Konstanz, 78457 Konstanz, Germany}
\def\fuaddress{@uni-konstanz.de}
\email{Matthias.Makowski\fuaddress}
\address{http://www.math.uni-konstanz.de/\~{}makowski/}
\begin{abstract}
Let $N$ be a $(n+1)$-dimensional globally hyperbolic Lorentzian manifold with a compact Cauchy hypersurface $\mathcal{S}_0$ and $F$ a curvature function, either the mean curvature $H$, the root of the second symmetric polynomial $\si_2 = \sqrt{H_2}$ or a curvature function of class $(K^*)$. We consider curvature flows with curvature function $F$ and a volume preserving term and prove long time existence of the flow and exponential convergence of the corresponding graphs in the $C^\infty$-topology to a hypersurface of constant $F$-curvature, provided there are barriers. Furthermore we examine stability properties and foliations of constant $F$-curvature hypersurfaces.
\end{abstract}
\maketitle
\tableofcontents

\nopagebreak

\section{Introduction}
\label{Introduction}

We show the long time existence and convergence to a constant $F$-hypersurface of the following curvature flow in a globally hyperbolic Lorentzian manifold with compact Cauchy hypersurface under suitable assumptions:
\begin{equation}
\label{floweq}
\begin{split}
	\dot{x} &= (\Phi(F) - f)\, \nu,\\
	x(0) &= x_0,
\end{split}
\end{equation}
where $x_0$ is the embedding of an initial, compact, connected, spacelike hypersurface $M_0$ of class $C^{m+2,\alpha}$, $2\leq m \in \bbn$, $0< \alpha < 1$, $\nu$ is the corresponding past directed normal, $F$ is a curvature function of class $C^{m,\alpha}(\Gamma)$ evaluated at the principal curvatures of the flow hypersurfaces $M(t)$, $x(t)$ denotes the embedding of $M(t)$, $\Phi$ is a smooth supplementary function satisfying $\Phi' > 0$, $\Phi'' \leq 0$, and $f$ is a volume preserving global term, $f = f_k$, see the definition below.

Furthermore the initial hypersurface should be \begin{em}admissible\end{em}, meaning that its principal curvatures belong to the defining cone $\Gamma$ of the curvature function $F$, which will be specified below.

Depending on which type of volume has to be preserved, we define the global term as in \cite{McCoyMixedAreaGen}:
\begin{equation}
\label{globTerm}
	f_k(t) = \frac{\int_{M_t}{H_k \Phi(F)\, \mathrm{d\mu_t}}}{\int_{M_t}{H_k\,\mathrm{d\mu_t}}}.
\end{equation}
Here $H_k$, $k = 0, ..., n$,  denotes the k-th elementary symmetric polynomial, where $H_0 = 1$. For an overview of the notation (especially concerning the curvature functions) we refer to section 2.

We assume that the ambient space $N$ is a $(n+1)$-dimensional smooth, connected, globally hyperbolic Lorentzian manifold with a compact, smooth, connected Cauchy hypersurface $\mathcal{S}_0$, and $N$ is covered by a future directed Gaussian coordinate system $(x^\alpha)$, such that the metric $(\bar{g}_{\alpha\beta})$ can be expressed in the form
\begin{equation}
\label{GKS}
	d\bar{s}^2 = e^{2\psi(x^0, x)}\{ - (dx^0)^2 + \sigma_{ij}(x^0, x)\, dx^idx^j\},
\end{equation}
where $x^0$ is the time function defined on an interval $I = (a,b)$, we suppose without loss of generality $0 \in I$ and $(x^i)$ are local coordinates for the Cauchy hypersurface $\mathcal{S}_0$. The coordinates can be chosen such that
\begin{equation}
	\mathcal{S}_0 = \{x^0 = 0\}.
\end{equation}
The existence of a smooth, proper function $f:N \rightarrow \bbr$ with non-vanishing timelike gradient in a merely connected, smooth Lorentzian manifold $N$ already assures the existence of such a special coordinate system, see \cite[Theorem 1.4.2]{GerhCP}, implying that $N$ is globally hyperbolic with a compact Cauchy hypersurface. Alternatively one can deduce the existence of the special coordinate system in smooth, globally hyperbolic Lorentzian manifolds with compact Cauchy hypersurface from \cite[Theorem 1.1]{Split2} and \cite[Lemma 2.2]{Split1}.

We need one further assumption on the ambient manifold, namely we consider curvature flows in cosmological spacetimes, a terminology due to Bartnik, meaning a Lorentzian manifold with the above properties, which furthermore satisfies the timelike convergence condition, an assumption which is quite natural in the setting of general relativity as it corresponds to the strong energy condition (see for example \cite{HawEll}). Hence for all $p\in N$ there holds
\begin{equation}
\label{TCC}
	\bar{R}_{\alpha\beta}V^\alpha V^\beta \geq 0 \quad\forall \text{ timelike } V \in T_pN.
\end{equation}
We only mention that for the proof of Theorem \ref{MainTheorem1} (with $F=H$) this condition could be relaxed to the case where the lower bound is $-\Lambda$ with a constant $\Lambda > 0$, where in this case one needs to assume that there holds $H > \sqrt{n\Lambda}$ on the initial hypersurface.

In the case of general curvature functions however we will need to assume that the timelike sectional curvatures of $N$ are non-positive, i.e. at points $p \in N$ there holds
\begin{equation}
\label{NPTSC}
	\bar{R}_{\alpha\beta\gamma\delta}V^\alpha W^\beta V^\gamma W^\delta \geq 0 \quad\forall \text{ timelike } V\in T_pN, \,\forall\, \text{ spacelike } W \in T_pN.
\end{equation}

The possible curvature functions are $F =H$, $F=\si_2$ or $F \in (K^*)$. For these we have to distinguish their cones of definition $\Gamma$ and the supplementary function $\Phi$:
\begin{itemize}
\item
Let $F = H$ and $k=0$, then let $\Phi(x) = x$ and $\Gamma = \bbr^n$. For $k=1$ let $\Gamma = \Gamma_1$ and $\Phi \in C^{m,\alpha}(\bbr_+)$ be an arbitrary function satisfying merely $\Phi' > 0$ and $\Phi'' \leq 0$. For example, one could consider the surface-area preserving inverse mean curvature flow, $\Phi(x) = -x^{-1}$. For higher $k$ the flow is not well defined, since convexity does not need to be preserved during the flow.
\item
For $F = \si_2 = H_2^{\frac{1}{2}}$ let $\Gamma = \Gamma_2$ and $\Phi(x) = x$ or $\Phi(x) = -x^{-1}$. Again, the flow is only well defined for $k \in \{0,1,2\}$ for the same reasons as above.
\item
Lastly, let $F \in (K^*)$ be a homogeneous function of degree 1 and of class $C^{m,\alpha}(\Gamma_+)$, then for $k \in \{0, \ldots, n\}$ we choose $\Phi(x) = \log(x)$ and $\Gamma = \Gamma_+$.
\end{itemize}

We denote by $(F, \Gamma, \Phi)$ one of the possible choices of curvature functions and their respective cones of definitions as well as supplementary functions stated above.
	
In order to be able to derive $C^0$-estimates we have to add an additional assumption, first we provide the necessary definition:
\begin{Definition}
\label{DefVolDec}
Let $F$ be a continuous curvature function defined on an open, convex, symmetric cone $\Gamma \subset \bbr^n$. Then we define (here we distinguish the cases considering the future or the past by brackets):

Let $c$ be a constant, then we say we have a future (past) curvature barrier for $(F, \Gamma, c)$ of class $C^{k, \beta}$, where $k\in \bbr$, $k\geq 2$, $0\leq \beta \leq 1$, if there exists a compact, connected, spacelike and admissible hypersurface $M$ of class $C^{k, \beta}$, satisfying
\begin{equation}
\label{Barrier}
	F_{|M} \geq (\leq)\, c.
\end{equation}
\end{Definition}

With the definition
\begin{equation}
	c_1 = \underset{M_0}{\min}\, F \quad \textnormal{ and } \quad c_2 = \underset{M_0}{\max}\, F,
\end{equation}
we can state the following assumption:
\begin{Annahme}
\label{MainAssumption}
	We have a future curvature barrier for $(F, \Gamma, c_2)$ of class $C^{2}$ and a past curvature barrier for $(F, \Gamma, c_1)$ of class $C^{2}$. If in the case $F=H$, $\Gamma = \bbr^n$, for $i=1$ or $i=2$ there holds $c_i = 0$, then we assume the corresponding barrier to be strict.
	
	If the curvature function is not the mean curvature, we assume the existence of a strictly convex function $\chi_\Om \in C^2(\bar{\Om})$, where $\Om \subset N$ is the region between the barriers. For geometric conditions implying the existence of such a function see Lemma \ref{ExistenceStrictlyConvex}.
\end{Annahme}

Now we state the theorem:
\begin{Theorem}
\label{MainTheorem1}
	Let $N$, $M_0$ and $(F, \Gamma, \Phi)$ be as above, $m \geq 2$, $0< \alpha < 1$, and suppose there holds assumption \ref{MainAssumption}. Then the flow \eqref{floweq} with $f= f_k$ has a unique solution existing for all times $0\leq t < \infty$, such that for fixed time $M_t \in C^{m+2,\alpha}$ and the $M_t's$ considered as graphs $u(t, \cdot)$ converge exponentially in $C^{m+2}$ to a compact, connected, spacelike hypersurface of class $C^{m+2,\alpha}$, which is a stable solution of the equation
	\begin{equation}
		F = c_0,
	\end{equation}
	where $c_0 = \lim\limits_{t\rightarrow \infty}{\Phi^{-1}(f_k)}$.
	
If $M_0$ and $F$ are smooth, then the convergence of the graphs is exponential in the $C^\infty$-topology.

For $k=0$ the enclosed volume, for $k=1$ the volume of the hypersurfaces and if the ambient space has constant curvature $K_N = 0$, then for $1 <k \leq n$ the mixed volume $V_{n+1-k}$ is preserved.
\end{Theorem}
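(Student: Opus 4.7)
The plan is to rewrite the flow \eqref{floweq} as a scalar parabolic equation for the graph function $u(t,\cdot)$ of $M_t$ over $\mathcal{S}_0$ via the Gaussian coordinates \eqref{GKS}. Since $\Phi'>0$ and $F$ is elliptic on $\Gamma$, the resulting equation is uniformly parabolic as long as the principal curvatures of $M_t$ remain in a compact subset of $\Gamma$, so short-time existence and uniqueness follow from standard quasilinear theory. Long-time existence and the announced convergence then rest on $t$-independent a priori estimates up to $C^{m+2,\alpha}$.

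For the $C^0$ bound, the two barriers from Assumption \ref{MainAssumption} should confine $M_t$ to the enclosed region $\Om$: at a hypothetical first touching with, say, the future barrier $M^+$, geometric comparison at the contact point gives an inequality for $F_{|M_t}$, which, combined with the fact that $f_k$ is an $H_k$-weighted mean of $\Phi(F)$ on $M_t$ and therefore pinched between $\min_{M_t}\Phi(F)$ and $\max_{M_t}\Phi(F)$, produces a sign contradiction for the normal velocity $\Phi(F)-f_k$. This argument simultaneously yields a uniform bound on $|f_k|$. The $C^1$ bound on the tilt $\tilde v := \langle \nu, \partial_0\rangle$ is then obtained by applying the maximum principle to an expression of the form $\log\tilde v + \lambda\chi_\Om$, respectively using the time function in place of $\chi_\Om$ when $F=H$, where \eqref{TCC} alone suffices.

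The main technical obstacle is the $C^2$ estimate, i.e.\ a uniform bound on the principal curvatures together with a uniform distance of the curvature tuple from $\partial\Gamma$. Writing the evolution equations for $F$ and for $h^i_j$, the ambient curvature terms acquire the correct sign from \eqref{NPTSC} (respectively from \eqref{TCC} in the mean-curvature case), while the extra terms generated by the global translation $-f_k\,\nu$ are of lower order and controlled by the previously obtained bound on $|f_k|$. Applying the maximum principle to a test function of the form $\log\kappa_{\max}+\lambda\chi_\Om$, the coercive Hessian of $\chi_\Om$ absorbs the remaining reaction terms in the non mean-curvature case and produces an upper bound on $\kappa_{\max}$; the past barrier and ellipticity of $F$ keep the curvatures away from $\partial\Gamma$. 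Concavity of $F$ on $\Gamma$ (and of $\Phi$ in the homogeneous degree one case) then allows Krylov-Safonov and Schauder bootstrapping to yield uniform $C^{m+2,\alpha}$ bounds, whence long-time existence follows by standard continuation.

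For exponential convergence, the strategy is to show that the oscillation quantity $\Theta := \Phi(F)-f_k$ decays exponentially. Differentiating $\int_{M_t}\Theta^2\, d\mu_t$, one exploits the defining identity of $f_k$ from \eqref{globTerm} to eliminate the zero order contribution, the concavity of $\Phi$ together with the curvature bounds to generate a coercive quadratic form in $\nabla F$, and a Poincaré inequality on $M_t$ (whose geometry is uniformly controlled by the prior estimates) to close a differential inequality of the form $\tfrac{d}{dt}\int_{M_t}\Theta^2\,d\mu_t \leq -\delta \int_{M_t}\Theta^2\,d\mu_t$. Interpolation against the uniform $C^k$ bounds promotes this $L^2$-decay to exponential decay in every $C^k$-norm, giving convergence of $f_k$ to some $\Phi(c_0)$, of $F$ to $c_0$, and of the graph functions $u(t,\cdot)$ to a limit defining a stable solution of $F=c_0$. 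The volume preservation statements are then verified by direct differentiation of the corresponding volume using \eqref{globTerm} together with, for $k>1$, the Minkowski-type integral identities available in ambient spaces of constant sectional curvature.
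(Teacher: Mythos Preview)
Your outline has the right overall architecture, but there are two genuine gaps, and several places where your route diverges from the paper's.

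\textbf{The $C^0$ step is circular as written.} At a first touching with the future barrier $M^+$ at a point $p$, geometric comparison gives $F_{|M_t}(p)\ge c_2$, hence $\Phi(F)(p)\ge\Phi(c_2)$. To get a sign contradiction on the normal speed you need $\Phi(F)(p)\ge f_k$, but $f_k\le\max_{M_t}\Phi(F)$ is all the weighted-mean property gives, and $p$ is a maximum of $u$, not of $\Phi(F)$. The paper resolves this by \emph{first} proving, via the maximum principle applied to the evolution equation \eqref{EvPhi} and using \eqref{TCC} resp.\ \eqref{NPTSC}, that the pointwise bounds $c_1\le\Phi(F)\le c_2$ are preserved for all time (Lemma \ref{FBoundsLemma}); only then does the barrier comparison (Lemma \ref{FMonotone}) yield the $C^0$ estimate. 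The preservation of the $\Phi(F)$-range is the key step you are missing, and it is also what gives the uniform bound on $f_k$ and keeps the curvatures away from $\partial\Gamma$.

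\textbf{The Krylov--Safonov step does not apply.} After the $C^2$ bound, you invoke Krylov--Safonov and Schauder bootstrapping. The paper explicitly points out that this fails here: the global term $f_k(t)$ is at this stage merely bounded, not H\"older in $t$, so the scalar equation $\partial_t u=-e^{-\psi}v(\Phi(F)-f_k)$ is not of the form to which the parabolic Krylov--Safonov estimates apply. The paper instead (Section \ref{higherorder}) obtains $H^{\beta,\beta/2}$ estimates for $u$ and $\Phi(F)$ from the parabolic Harnack inequality, then freezes $t$, extends $F$ via a Bellman construction (Lemma \ref{FExtension}), and applies Caffarelli's elliptic $C^{2,\beta}$ estimate (Theorem \ref{NonlinearCaffarelli}) to get spatial $C^{2,\beta}$ control uniformly in $t$; parabolic H\"older estimates for $Du$, $D^2u$ then follow from Andrews' results, after which $f_k$ itself becomes H\"older and Schauder theory closes.

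\textbf{Other differences.} For the $C^1$ estimate the paper uses $w=\tilde v\,e^{\mu e^{\lambda u}}$ (Proposition \ref{C1estimates}), not $\log\tilde v+\lambda\chi_\Om$; the convex function $\chi_\Om$ enters only at the $C^2$ stage, and for $F\in(K^*)$ the test function is $\log h^n_n+\lambda\tilde v+\mu\chi$, where the extra $\tilde v$ term is needed. For exponential convergence the paper does not use an $L^2$/Poincar\'e argument; it applies Lemma \ref{LemmaLipschitz} to $\eta(t)=\Phi^{\sup}(t)-\Phi^{\inf}(t)$ and reads off $\dot\eta+c_0\eta\le0$ directly from \eqref{EvPhi}, which is both simpler and avoids needing a uniform Poincar\'e constant. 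Finally, short-time existence is not ``standard quasilinear theory'' because of the nonlocal term; the paper uses a Schauder fixed-point argument on the map $h\mapsto f_k[u_h]$ (Section \ref{shorttime}).
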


Finally, we want to name some of the works about volume preserving curvature flows in different ambient manifolds and discuss shortly the results obtained in this work.

Volume preserving curvature flows have been considered for various curvature functions in different settings. Roughly speaking, if one assumes a certain convexity assumption or pinching condition on the initial hypersurface and shows that this condition is preserved during the flow, then after proving a priori estimates the existence of the flow for all times $t \in [0,\infty)$ and the exponential convergence in the $C^\infty$-topology of the flow to a sphere or a geodesic sphere can be deduced.

In the case the ambient manifold is $\bbr^{n+1}$, volume preserving mean curvature flows have been previously considered by Gage for $n=1$ in \cite{Gage} and by Huisken for $n\geq 2$ in \cite{HuisVol}. McCoy considered mixed volume preserving mean curvature flows in $\bbr^{n+1}$ in \cite{McCoyMixedArea} and later on extended the results to very general curvature functions in \cite{McCoyMixedAreaGen}.

Recently Cabezas-Rivas and Miquel proved similar results for a volume preserving mean curvature flow in the hyperbolic space under the assumption of horosphere-convexity of the initial hypersurface, see \cite{CabMiqHyp}. Cabezas-Rivas and Sinestrari then considered the volume-preserving flow by powers of the elementary symmetric polynomials in the euclidean setting in \cite{RivSin} by assuming a pinching condition on the principal curvatures of the initial hypersurface.

However, to our knowledge the only result concerning volume preserving curvature flows in Lo\-rentzian manifolds can be found in the paper \cite{EckerHuisken} by Ecker and Huisken, where the volume preserving mean curvature flow has been considered. The method in the Lorentzian case differs substantially from the euclidean case. Neither convexity nor the pinching condition on the principal curvatures is preserved, but assuming \eqref{TCC} in the case of $F=H$ and \eqref{NPTSC} in the case of a general curvature function respectively, investigating the evolution equation for the curvature function one can see that the upper and lower bound of the curvature function is preserved during the flow, which is also valid if an arbitrary, but bounded global term is considered. This result is the crucial part that enables one to prove $C^0$-estimates under the assumption of barriers. Now the $C^1$ and $C^2$-a priori estimates can be deduced by the same methods used in the case of a time-independent force-term and do not rely on the special choice of the global term. The higher order estimates can not be deduced directly from the results of Krylov-Safonov in view of the global term (which is merely bounded at this moment), instead we use a method already employed in the papers \cite{McCoyMixedAreaGen} and \cite{RivSin}. Then again the evolution equation for the curvature function is the starting point to conclude the exponential convergence to a hypersurface of constant $F$-curvature.

From the above remark about the dependence of the proofs on the global term $f$ one can conclude, that, as far as long time existence is concerned, a far wider class of global terms can be considered than the ones used throughout the paper. In particular one can look as well at curvature flows that preserve volumes with different densities and obtain the same results stated above, as they neither disturb the boundedness of the curvature function nor the analysis carried out to achieve convergence.

It is also possible to prove the foliation of a future end of $N$ by CMC-\-hyper\-surfaces by a similar method as in \cite{GerhCP} by using the volume preserving curvature flow. However, since the proof is more complicated than the proof by using the mean curvature flow without a global term, we omit the proof of this result. Instead, we show in section 10 that a region enclosed by barriers for the $F$-curvature can be foliated by hypersurfaces of constant $F$-curvature. Furthermore we show that each CFC-surface in the interior of this region can be obtained as the limit hypersurface of a nontrivial curvature flow which preserves the volume respectively the area. 

{\bf Acknowledgement:} This work is part of the doctoral dissertation of the author at the University of Heidelberg. The author wishes to thank Prof. Dr. C. Gerhardt for the introduction to the subject of geometric analysis and for the most important parts of his mathematical education in general.

\newpage

\section{Notation and Definitions}
\label{Notation}

The main objective of this section is to formulate the governing equations of a hypersurface in a Lorentzian $(n+1)$-dimensional manifold $N$ and to provide the definitions of the classes of curvature as well as some well-known properties of certain curvature functions which will be used throughout this paper. Note that the main differences of hypersurfaces in Lorentzian manifolds compared to hypersurfaces in Riemannian manifolds arise from the sign change in the Gauß formula and hence the Gauß equation. For more detailed definitions about curvature functions, we refer the reader to \cite[Chapter 2.1, 2.2]{GerhCP} and for an account of the differential geometry to \cite[Chapter 11, 12]{GerhAna} and especially Chapter 12.5 therein with respect to Gaussian coordinate systems and Lorentzian manifolds.

Throughout this section $N$ will be assumed to be a $(n+1)$-dimensional Lorentzian manifold and, unless stated otherwise, the summation convention is used throughout the paper.

We will denote geometric quantities in the ambient space $N$ by greek indices with range from $0$ to $n$ and usually with a bar on top of them, for example the metric and the Riemannian curvature tensor in the ambient space will be denoted by $(\bar{g}_{\alpha\beta})$ and $(\bar{R}_{\alpha\beta\gamma\delta})$ respectively, etc., and geometric quantities of a spacelike hypersurface $M$ by latin indices ranging from $1$ to $n$, i.e. the induced metric and the Riemannian curvature tensor on M are denoted by $(g_{ij})$ and $(R_{ijkl})$ respectively. Generic coordinate systems in $N$ and $M$ will be denoted by $(x^\alpha)$ and $(\xi^i)$ respectively. Ordinary partial differentiation will be denoted by a comma whereas covariant differentiation will be indicated by indices or in case of possible ambiguity they will be preceded by a semicolon, i.e. for a function $u$ in $N$, $(u_\alpha)$ denotes the gradient and $(u_{\alpha\beta})$ the Hessian, but e.g. the covariant derivative of the curvature tensor will be denoted by $(\bar{R}_{\alpha\beta\gamma\delta;\epsilon})$. We also point out that (with obvious generalizations to other quantities)
\begin{equation}
	\bar{R}_{\alpha\beta\gamma\delta;i} = \bar{R}_{\alpha\beta\gamma\delta;\epsilon} x_i^\epsilon,
\end{equation}
where $x$ denotes the embedding of $M$ in $N$ in local coordinates $(x^\alpha)$ and $(\xi^i)$.

The induced metric of the hypersurface will be denoted by $g_{ij}$, i.e.
\begin{equation}
	g_{ij} = \langle x_i, x_j \rangle \equiv \bar{g}_{\alpha \beta} x_i^\alpha x_j^\beta,
\end{equation}
the second fundamental form will be denoted by $(h_{ij})$ and the normal by $\nu$, which is a \begin{em}timelike\end{em} vector, i.e. for $p \in M$ there holds
\begin{equation}
	\nu(p) \in C_p := \{\xi \in T_p^{1,0}(N): \langle \xi, \xi \rangle < 0\},
\end{equation}
where $T_p^{k, l}(N)$ denotes the k-times contravariant and l-times covariant tensors and we note that the light cone $C_p$ consists of two connected components, $C_p^+$ and $C_p^-$, which we call \begin{em} future directed \end{em} and \begin{em} past directed\end{em} respectively.

The geometric quantities of the spacelike hypersurface $M$ are connected through the \begin{em}Gauß formula\end{em}, which can be considered as the definition of the second fundamental form,
\begin{equation}
	x_{ij} = h_{ij}\nu,
\end{equation}
where we are free to choose the future or the past directed normal, but we stipulate that we always use the past directed normal.

Note that here and in the sequel a covariant derivative is always a full tensor, i.e.
\begin{equation}
	x_{ij}^\alpha = x_{,ij}^\alpha -\Gamma_{ij}^k x_k^\alpha + \bar{\Gamma}_{\beta\gamma}^\alpha x_i^\beta x_j^\gamma,
\end{equation}
where $\bar{\Gamma}^\alpha_{\beta \gamma}$ and $\Gamma^k_{ij}$ denote the Christoffel-symbols of the ambient space and hypersurface respectively.

The second equation is the \begin{em}Weingarten equation\end{em}:
\begin{equation}
	\nu_i = h_i^k x_k = g^{kj} h_{ij} x_k.
\end{equation}

Finally, we have the \begin{em}Codazzi equation\end{em}
\begin{equation}
	h_{ij;k} = h_{ik;j} + \bar{R}_{\alpha\beta\gamma\delta}\nu^\alpha x_i^\beta x_j^\gamma x_k^\delta,
\end{equation}
as well as the \begin{em}Gauß equation\end{em}
\begin{equation}
	R_{ijkl} = -\{h_{ik}h_{jl} - h_{il}h_{jk}\} + \bar{R}_{\alpha\beta\gamma\delta}x^\alpha_i x_j^\beta x_k^\gamma x_l^\delta.
\end{equation}
Note that in the last equation the sign change comes into play. 

Now we want to define the different classes of curvature functions, first we provide the definition of such functions and mention some identifications, which will be used in the sequel without explicitly stating them again.
\begin{Definition}
	Let $\Gamma \subset \bbr^n$ be an open, convex, symmetric cone, i.e.
	\begin{equation}
		(\kappa_i) \in \Gamma \Longrightarrow (\kappa_{\pi i}) \in \Gamma \quad \forall \, \pi \in \mathcal{P}_n,
	\end{equation}
	where $\mathcal{P}_n$ is the set of all permutations of order $n$. Let $f \in C^{m, \alpha}(\Gamma)$, $m \in \bbn$, $0\leq \alpha \leq 1$, be \textit{symmetric}, i.e.,
	\begin{equation}
		f(\kappa_i) = f(\kappa_{\pi i}) \quad \forall \, \pi \in \mathcal{P}_n.
	\end{equation}
	Then $f$ is said to be a \textit{curvature function} of class $C^{m,\alpha}$. For simplicity we will also refer to the pair $(f, \Gamma)$ as a curvature function.
\end{Definition}
Now denote by $\mathbf{S}$ the symmetric endomorphisms of $\bbr^n$ and by $\mathbf{S}_\Gamma$ the symmetric endomorphisms with eigenvalues belonging to $\Gamma$, an open subset of $\mathbf{S}$. Then we can define a mapping
\begin{equation}
\begin{split}
	F: &\mathbf{S} \rightarrow \bbr,\\
	&A\mapsto f(\kappa_i),
\end{split}
\end{equation}
where the $\kappa_i$ denote the eigenvalues of $A$. For the relation between these different notions, especially the differentiability properties and the relation between their derivatives, see \cite[Chapter 2.1]{GerhCP}. Since the differentiability properties are the same for $f$ as for $F$ in our setting, see \cite[Theorem 2.1.20]{GerhCP}, we do not distinguish between these notions and write always $F$ for the curvature function. Hence at a point $x$ of a hypersurface we can consider a curvature function $F$ as a function defined on a cone $\Gamma \subset \bbr^n$, $F = F(\kappa_i)$ for $(\kappa_i) \in \Gamma$ (representing the principal curvatures at the point $x$ of the hypersurface), as a function depending on $(h_i^j)$, $F = F(h_i^j)$ or even as a function depending on $(h_{ij})$ and $(g_{ij})$, $F = F(h_{ij}, g_{ij})$. However, we distinguish between the derivatives with respect to $\Gamma$ or $\mathbf{S}$. We summarize briefly our notation and important properties:

	For a (sufficiently smooth) curvature function $F$ we denote by $F^{ij} = \frac{\partial F}{\partial h_{ij}}$, a contravariant tensor of order 2, and $F^j_i = \frac{\partial F}{\partial h_j^i}$, a mixed tensor, contravariant with respect to the index $j$ and covariant with respect to $i$. We also distinguish the partial derivative $F_{,i} = \frac{\partial F}{\partial \kappa_i}$ and the covariant derivative $F_{;i} = F^{kl}h_{kl;i}$. Furthermore $F^{ij}$ is diagonal if $h_{ij}$ is diagonal and in such a coordinate system there holds $F^{ii} = \frac{\partial F}{\partial \kappa_i}$. For a relation between the second derivatives see \cite[Lemma 2.1.14]{GerhCP}. Finally, if $F \in C^2(\Gamma)$ is concave, then $F$ is also concave as a curvature function depending on $(h_{ij})$. 
With these definitions we can turn to special classes of curvature functions. 

But first we remind the definition of an admissible hypersurface:
\begin{Definition}
A spacelike, orientable hypersurface $M$ of class $C^2$ in a Lo\-rentzian manifold $N$ is said to be \begin{em}admissible\end{em} with respect to a continuous curvature function $(F, \Gamma)$, if its principal curvatures with respect to the past directed normal lie in $\Gamma$.
\end{Definition}\begin{Definition}
\label{curvClass}
We distinguish three classes of curvature functions:
\begin{itemize}
	\item[(i)] A symmetric curvature function $F \in C^{2,\alpha}(\Gamma_+)\cap C^0(\bar{\Gamma}_+)$, where $\Gamma_+ := \{(\kappa_i) \in \bbr^n: \kappa_i > 0, 1 \leq i \leq n\}$, positively homogeneous of degree $d_0 > 0$, is said to be of class $(K)$, if 
		it is strictly monotone, i.e.
		\begin{equation} F_{,i} = \frac{\partial F}{\partial \kappa^i} > 0 \quad \text{in } \Gamma_+ \, \text{,} \end{equation}
		vanishes on the boundary of $\Gamma_+$ and fulfills the following inequality:
		\begin{equation}
			F^{ij,kl}\eta_{ij}\eta_{kl} \leq F^{-1}(F^{ij}\eta_{ij})^2 - F^{ik}\tilde{h}^{jl}\eta_{ij}\eta_{kl} \quad \forall\, \eta \in \mathbf{S},
		\end{equation}
		where $F$ is evaluated at $(h_{ij}) \in \bfS_{\Gamma_+}$ and $(\tilde{h}^{ij})$ is the inverse of $(h_{ij})$.
	\item[(ii)] A function $F \in (K)$ is said to be of class $(K^*)$ if there exists $0 < \epsilon_0 = \epsilon_0(F)$ such that
		\begin{equation}
			\epsilon_0 F H\leq F^{ij}h_{ik} h^k_j \quad \forall\, (h_{ij}) \in \mathbf{S}_{\Gamma_+},
		\end{equation}
		where $F$ is evaluated at $(h_{ij})$ and $H$ represents the mean curvature, i.e. the sum of the eigenvalues of $(h_{ij})$.
	\item[(iii)] A differentiable curvature function $F$ is said to be of class $(D)$, if for every admissible hypersurface $M$ the tensor $F^{ij}$, evaluated at $M$, is divergence free.
\end{itemize}
\end{Definition}
First, we define the most important curvature functions, the elementary symmetric polynomials
\begin{equation}
H_k(\lambda_1, \cdots, \lambda_n) = \sum_{i_1<\cdots <i_k} \lambda_{i_1}\cdots\lambda_{i_k},\quad \lambda = (\lambda_i) \in \bbr^n, \, 1\leq k \leq n,
\end{equation} 
and note that the $n$-th root of the gaussian curvature $\si_n = K^{\frac{1}{n}} = H_n^{\frac{1}{n}}$ is an example of a curvature function of class $(K^*)$. 

Further examples are given by noticing that if $F \in (K^*)$ then $F^a \in (K^*)$ for $a > 0$ and if furthermore $G \in (K)$ (where in this case it does not have to vanish on the boundary) then $FG \in (K^*)$. Possible choices of $G$ would be the inverses of the symmetric polynomials $\tilde{H}_k(\kappa_i) = \frac{1}{H_k(\kappa_i^{-1})}$, see \cite[Chapter 2.2]{GerhCP}. Secondly, we remark, that a curvature function of class $(K)$ and homogeneous of degree 1 is also concave, see \cite[Lemma 2.2.14]{GerhCP}.

We note some important properties of the elementary symmetric polynomials:
\begin{Lemma}
\label{symPol}
Let $1\leq k \leq n$ be fixed.
	\begin{itemize}
		\item[(i)] We define the convex cone 
			\begin{equation}
				\Gamma_k = \{(\kappa_i) \in \bbr^n: H_1(\kappa_i) > 0, H_2(\kappa_i) > 0, \ldots, H_k(\kappa_i) > 0 \}.
			\end{equation}
			Then $H_k$ is strictly monotone on $\Gamma_k$ and $\Gamma_k$ is exactly the connected component of
			\begin{equation}
				\{(\kappa_i) \in \bbr^n: H_k(\kappa_i) > 0\}
			\end{equation}
			containing the positive cone.
		\item[(ii)] The $k$-th roots $\sigma_k = H_k^{\frac{1}{k}}$ are concave on $\Gamma_k$.
		\item[(iii)] For $ 1 < s < t < n$ and $\tilde{\sigma}_k = \Bigl(\frac{H_k}{{n \choose k}}\Bigr)^{\frac{1}{k}}$ there holds
			\begin{equation}
				\tilde{\sigma}_n \leq \tilde{\sigma}_t \leq \tilde{\sigma}_s \leq \tilde{\sigma}_1,
			\end{equation}
			where the principal curvatures have to lie in $\Gamma_n \equiv \Gamma_+$ for the first, in $\Gamma_t$ for the second and in $\Gamma_s$ for the third inequality.
		\item[(iv)] For fixed $i$, no summation over $i$, there holds
			\begin{equation}
				H_k = \frac{\partial H_{k+1}}{\partial \kappa_i} + \kappa_i \frac{\partial H_k}{\partial \kappa_i}.
			\end{equation}
	\end{itemize}
\end{Lemma}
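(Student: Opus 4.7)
The plan is to treat these as classical facts about symmetric polynomials and Gårding-hyperbolic cones, supplying short direct arguments for the elementary pieces and invoking standard references (Gårding's theorem, the Newton inequalities) for the deeper ones; this is the approach already signalled by the reference to \cite[Chapter 2.2]{GerhCP}.

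I would dispose of (iv) first, since it is purely combinatorial and underpins the other parts. Fix $i$ and split the sum defining $H_{k+1}(\kappa_1,\ldots,\kappa_n)$ according to whether the index $i$ appears among the $k+1$ factors:
\begin{equation*}
H_{k+1} = \kappa_i \sum_{\substack{j_1<\cdots<j_k\\ j_l \neq i}} \kappa_{j_1}\cdots \kappa_{j_k} \;+\; \sum_{\substack{j_1<\cdots<j_{k+1}\\ j_l\neq i}} \kappa_{j_1}\cdots\kappa_{j_{k+1}}.
\end{equation*}
The first sum depends on $\kappa_i$ linearly, so differentiating in $\kappa_i$ gives $\partial H_{k+1}/\partial\kappa_i = H_k(\kappa_1,\ldots,\hat\kappa_i,\ldots,\kappa_n)$, and similarly $\partial H_k/\partial\kappa_i = H_{k-1}(\kappa_1,\ldots,\hat\kappa_i,\ldots,\kappa_n)$. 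Using the analogous decomposition $H_k(\kappa) = H_k(\kappa_1,\ldots,\hat\kappa_i,\ldots,\kappa_n) + \kappa_i H_{k-1}(\kappa_1,\ldots,\hat\kappa_i,\ldots,\kappa_n)$ then yields (iv).

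For (i), I would induct on $k$. The base case $\Gamma_1 = \{H_1>0\}$ is a half-space, hence open, convex, symmetric and contains $\Gamma_+$. For the inductive step, assuming the claim for $H_1,\ldots,H_{k-1}$, the identity in (iv) together with positivity of $H_{k-1}$ on $\Gamma_k$ shows that every partial derivative $\partial H_k/\partial\kappa_i$ is positive on $\Gamma_k$ (at least at points where also $H_k>0$), giving strict monotonicity. That $\Gamma_k$ coincides with the connected component of $\{H_k>0\}$ around the positive cone, and that this component is convex, is exactly the content of Gårding's theorem on hyperbolic polynomials; I would cite \cite[Theorem 2.2.4 and 2.2.5]{GerhCP} (or Gårding's original paper) rather than reprove it. Item (ii) is then the second half of Gårding's theorem: the $k$-th root of a hyperbolic polynomial, normalized so that it is positive on its positivity cone, is concave on that cone. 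Again I would cite it directly.

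Finally, (iii) is the classical chain of Newton--Maclaurin inequalities. The strategy is to reduce the whole chain to the basic Newton inequality $\tilde H_k^2 \geq \tilde H_{k-1}\tilde H_{k+1}$ (valid on $\Gamma_{k+1}$, or in fact on $\bbr^n$ in the real-rooted case), and then iterate: from $\tilde H_{k+1}/\tilde H_k \leq \tilde H_k/\tilde H_{k-1}$ one deduces by induction $(\tilde H_{k+1})^{1/(k+1)} \leq (\tilde H_k)^{1/k}$ on the appropriate cone, which telescopes to the claimed monotonicity. The main obstacle, if one insisted on a self-contained treatment, is the full proof of Gårding's theorem underlying (i) and (ii); within the scope of this paper, however, citing it is clearly the sensible route, so there is no real obstacle once part (iv) is in hand.
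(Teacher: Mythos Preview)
Your proposal is correct and matches the paper's own approach: the paper also treats (i)--(iii) as classical facts established elsewhere (citing \cite[Section 2]{HuiskSinestr} for (i) and \cite{Lieberman}, Lemma 15.12 and Theorem 15.16 for (ii)--(iii)) and obtains (iv) directly from the definition of $H_k$. The only difference is your choice of references (G{\aa}rding's theorem via \cite{GerhCP} and the Newton--Maclaurin inequalities) and that you supply more detail; one small caution is that your inductive sketch for strict monotonicity in (i) via the identity (iv) is not quite self-contained as written, but since you fall back on G{\aa}rding's theorem anyway this does not affect the argument.
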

\begin{proof}
	The convexity of the cone $\Gamma_k$ and (i) follows from \cite[Section 2]{HuiskSinestr}, (ii) and (iii) from \cite{Lieberman}, Lemma 15.12 and Theorem 15.16, and (iv) follows directly from the definition of the $H_k$.
\end{proof}
A consequence of the preceding Lemma is the following
\begin{Lemma}
\label{divFree}
	Let $N$ be a semi-Riemannian space of constant curvature, then the symmetric polynomials $F= H_k$, $1\leq k \leq n$, are of class $(D)$. In case $k=2$ it suffices to assume that $N$ is an Einstein manifold.
\end{Lemma}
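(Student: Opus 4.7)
The plan is to deduce $F^{ij}{}_{;j}=0$ for $F=H_k$ from the Codazzi equation, exploiting that in constant curvature the third covariant derivative $h_{ij;k}$ is totally symmetric, and then combining this with a Newton-tensor recursion.

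First I would observe that in a space of constant curvature $K_N$ the ambient Riemann tensor takes the form
\begin{equation*}
\bar{R}_{\alpha\beta\gamma\delta} = K_N\bigl(\bar{g}_{\alpha\gamma}\bar{g}_{\beta\delta}-\bar{g}_{\alpha\delta}\bar{g}_{\beta\gamma}\bigr),
\end{equation*}
so the inhomogeneous term $\bar{R}_{\alpha\beta\gamma\delta}\nu^\alpha x_i^\beta x_j^\gamma x_k^\delta$ in the Codazzi equation vanishes identically because $\langle \nu,x_j\rangle=\langle \nu,x_k\rangle=0$. Hence $h_{ij;k}=h_{ik;j}$ on every admissible hypersurface, and together with the usual $h_{ij}=h_{ji}$ this makes the tensor $h_{ij;k}$ fully symmetric in all three indices.

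Second, I would identify $F^{ij}=\partial H_k/\partial h_{ij}$ (up to the usual convention) with the Newton transformation $T_{k-1}^{ij}$ defined by the recursion
\begin{equation*}
T_k^{ij}=H_k\,g^{ij}-h^{i}{}_{l}\,T_{k-1}^{lj},\qquad T_0^{ij}=g^{ij},
\end{equation*}
and prove $(T_k^{ij})_{;j}=0$ by induction on $k$. The base case is trivial. In the induction step the derivative of $H_k$ produces a term that can be rewritten using the totally-symmetric tensor $h_{pq;i}$, while differentiating $h^i{}_l T_{k-1}^{lj}$ gives one term that vanishes by the inductive hypothesis and another that, after relabelling indices using the total symmetry of $h_{pq;i}$, cancels the first.

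Third, for the Einstein case with $k=2$: here $(H_2)^{ij}=H\,g^{ij}-h^{ij}$, so
\begin{equation*}
(H_2)^{ij}{}_{;j}=H^{;i}-h^{ij}{}_{;j}.
\end{equation*}
Tracing the Codazzi equation with $g^{pq}$ and using $g^{pq}x_p^\beta x_q^\delta=\bar{g}^{\beta\delta}+\nu^\beta\nu^\delta$, the right-hand side collapses to a single Ricci contraction $\bar{R}_{\alpha\gamma}\nu^\alpha x_k^\gamma$ (the $\nu^\beta\nu^\delta$ piece drops out by antisymmetry of $\bar{R}$). On an Einstein manifold $\bar{R}_{\alpha\gamma}$ is a multiple of $\bar{g}_{\alpha\gamma}$, which contracted with $\nu^\alpha x_k^\gamma$ yields zero, and we conclude $(H_2)^{ij}{}_{;j}=0$ without needing full constant curvature.

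The main obstacle is bookkeeping rather than ideas: the signs in the Lorentzian convention ($\langle\nu,\nu\rangle=-1$) need to be tracked when splitting $g^{pq}x_p^\beta x_q^\delta=\bar{g}^{\beta\delta}+\nu^\beta\nu^\delta$, and the induction must apply the total symmetry of $h_{ij;k}$ at exactly the step where the non-commutativity of $h^i{}_l$ and $T_{k-1}^{lj}$ would otherwise obstruct the cancellation. Once those are in place the identities reduce to straightforward tensor manipulations.
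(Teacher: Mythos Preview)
Your proposal is correct and follows essentially the same route as the paper's cited proof: the paper refers to \cite[Lemma 5.8]{GerhSurvey} and says the argument is induction on $k$ together with property (iv) of Lemma \ref{symPol}, and that recursion $H_k = \partial H_{k+1}/\partial\kappa_i + \kappa_i\,\partial H_k/\partial\kappa_i$ is precisely the diagonal form of your Newton-tensor recursion $T_k^{ij}=H_k\,g^{ij}-h^i{}_l T_{k-1}^{lj}$. Your treatment of the Einstein case for $k=2$ via the contracted Codazzi equation is also the standard one and is consistent with the Lorentzian sign conventions used in the paper.
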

\begin{proof}
	The proof of the Lemma can be found in \cite[Lemma 5.8]{GerhSurvey}. The proof consists of induction on $k$ and (iv) of Lemma \ref{symPol}.
\end{proof}
Now we state a well-known inequality for general curvature functions:
\begin{Lemma}
\label{FHineq}
	Let $F \in C^2(\Gamma)$ be a concave curvature function, homogeneous of degree 1 with $F(1,\ldots, 1) > 0$, then 
	\begin{equation}
	F\leq \frac{F(1, \ldots, 1)}{n} H.
	\end{equation}
\end{Lemma}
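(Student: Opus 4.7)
The argument is the standard ``tangent plane lies above the graph'' estimate for concave, 1-homogeneous symmetric functions, so the plan is short.

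First I would invoke Euler's relation: since $F$ is homogeneous of degree $1$ on $\Gamma$,
\begin{equation*}
F(\kappa) = \sum_{i=1}^{n} \kappa_i\, F_{,i}(\kappa) \qquad \forall\, \kappa \in \Gamma.
\end{equation*}
Combining this with concavity of $F$ (applied at a base point $\kappa$ and a variable point $\mu \in \Gamma$), the supporting hyperplane inequality
\begin{equation*}
F(\mu) \leq F(\kappa) + \sum_{i=1}^{n} F_{,i}(\kappa)(\mu_i - \kappa_i)
\end{equation*}
collapses, via Euler, to the linear bound
\begin{equation*}
F(\mu) \leq \sum_{i=1}^{n} F_{,i}(\kappa)\, \mu_i.
\end{equation*}

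Next I would specialize to $\kappa = (1,\ldots,1)$, which lies in $\Gamma$ since $F(1,\ldots,1) > 0$ and $\Gamma$ is the open cone on which $F$ is defined (and the hypothesis implicitly requires it there). The symmetry of $F$ forces $F_{,i}(1,\ldots,1)$ to be independent of $i$; call the common value $c$. Applying Euler at $(1,\ldots,1)$ gives
\begin{equation*}
F(1,\ldots,1) = \sum_{i=1}^{n} 1 \cdot c = n\, c, \qquad \text{so}\qquad c = \frac{F(1,\ldots,1)}{n}.
\end{equation*}

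Plugging this into the linear bound yields, for every $\mu \in \Gamma$,
\begin{equation*}
F(\mu) \leq \frac{F(1,\ldots,1)}{n} \sum_{i=1}^{n} \mu_i = \frac{F(1,\ldots,1)}{n}\, H(\mu),
\end{equation*}
which is the claim. There is no serious obstacle: the only point worth checking is that $(1,\ldots,1) \in \Gamma$, which is ensured by the hypothesis $F(1,\ldots,1) > 0$, together with $F \in C^2(\Gamma)$.
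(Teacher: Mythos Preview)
Your argument is correct and is precisely the standard proof: the paper does not give its own argument but simply refers to \cite[Lemma 2.2.20]{GerhCP}, where exactly this combination of concavity, Euler's relation, and symmetry at the diagonal point is used. The only cosmetic remark is that the justification ``$(1,\ldots,1)\in\Gamma$ because $F(1,\ldots,1)>0$'' is slightly circular---the real reason is that the hypothesis $F(1,\ldots,1)>0$ already presupposes the point lies in the domain $\Gamma$ of $F$---but this does not affect the validity of the proof.
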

\begin{proof}
	See \cite[Lemma 2.2.20]{GerhCP}.
\end{proof}
\begin{Bemerkung}
	To estimate tensors, we will need a Riemannian metric on $N$. We use a Riemannian reference metric, which we define by
	\begin{equation}
		\tilde{g}_{\alpha\beta}dx^\alpha dx^\beta = e^{2\psi} \left\{(dx^0)^2 + \si_{ij}dx^i dx^j\right\}.
	\end{equation}
	The corresponding norm of a vector field $\eta$ on $N$ will be denoted by
	\begin{equation}
		|||\eta||| = (\tilde{g}_{\alpha\beta}\eta^\alpha \eta^\beta)^{\frac{1}{2}}.
	\end{equation}
\end{Bemerkung}

Finally, we want to note that we will use the parabolic H\"older spaces in later sections, where for the notation we refer to \cite[Definition 2.5.2]{GerhCP}.

\section{Evolution equations}
In this chapter we state some facts about the representation of hypersurfaces as graphs and the evolution equations of the geometric quantities needed throughout the paper. For a derivation of the latter we refer to \cite[Chapter 2]{GerhCP}. Note that there is a slight but significant difference in the evolution equations compared to the Riemannian case due to the sign change in the Gau\ss{} equation.
%
%
%
%
%
First of all, we have in view of \cite[Lemma 3.1]{KroenerInv} the following
\begin{Lemma}
	Let $N$ be a smooth, connected, globally hyperbolic Lorentzian manifold with a compact, connected Cauchy hypersurface $\mathcal{S}_0$ and $M \subset N$ a compact, connected, spacelike hypersurface of class $C^{m, \alpha}$, $0\leq \alpha \leq 1$, $1\leq m \in \bbn$, then $M$ can be written as a graph over $\mathcal{S}_0$
	\begin{equation}
		M = \textnormal{graph }u_{|\mathcal{S}_0},
	\end{equation}
	with $u \in C^{m,\alpha}(\mathcal{S}_0)$.
\end{Lemma}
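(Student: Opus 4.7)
The plan is to exploit the splitting \eqref{GKS}: in the Gaussian coordinates $(x^0,x^i)$ define the projection
\[
	\pi : N \to \mathcal{S}_0,\qquad \pi(x^0,x) = (0,x),
\]
whose fibres are the integral curves of $\partial_{x^0}$. If I can show that $\pi_M := \pi|_M$ is a $C^{m,\alpha}$-diffeomorphism from $M$ onto $\mathcal{S}_0$, then $u := x^0 \circ \pi_M^{-1}$ lies in $C^{m,\alpha}(\mathcal{S}_0)$ and satisfies $M = \textnormal{graph}\, u_{|\mathcal{S}_0}$, which is the claim.

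First I would check that $\pi_M$ is a local $C^{m,\alpha}$-diffeomorphism. Spacelikeness of $M$ means the past-directed normal $\nu$ is timelike, so $\partial_{x^0}|_p \notin T_pM$ for every $p \in M$. Since $\ker d\pi = \textnormal{span}(\partial_{x^0})$, the restriction $d\pi|_{T_pM}\colon T_pM \to T_{\pi(p)}\mathcal{S}_0$ is an injective linear map between $n$-dimensional spaces, hence an isomorphism. The $C^{m,\alpha}$ version of the inverse function theorem then yields that $\pi_M$ is locally a $C^{m,\alpha}$-diffeomorphism, and in particular that its image is open in $\mathcal{S}_0$.

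The crucial step is injectivity of $\pi_M$. Two distinct points $p_1 \neq p_2$ in $M$ over the same base point would lie on a common integral curve of $\partial_{x^0}$, and the segment between them is timelike by \eqref{GKS}, hence $p_1$ and $p_2$ would be timelike related. To exclude this I would invoke the standard fact from Lorentzian causal theory that in a globally hyperbolic spacetime every compact, connected spacelike hypersurface (without boundary) is achronal --- indeed a Cauchy hypersurface --- which can be traced back to the splitting results already cited after \eqref{GKS}. This causal-theoretic input is the only non-formal ingredient of the proof and is where I expect the main difficulty; all other steps are routine manifold arguments.

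Given injectivity, $\pi_M(M)$ is open in $\mathcal{S}_0$ by the local diffeomorphism property and closed in the Hausdorff manifold $\mathcal{S}_0$ as the continuous image of a compact set. Connectedness of $\mathcal{S}_0$ forces $\pi_M(M) = \mathcal{S}_0$, so $\pi_M$ is a bijective local $C^{m,\alpha}$-diffeomorphism, hence a global $C^{m,\alpha}$-diffeomorphism. Setting $u := x^0 \circ \pi_M^{-1}$ then gives the required $C^{m,\alpha}(\mathcal{S}_0)$ graph function, completing the argument.
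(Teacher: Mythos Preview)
Your argument is correct and is precisely the standard route: the paper does not give an independent proof but cites \cite[Lemma 3.1]{KroenerInv} and remarks that the $C^{m,\alpha}$ regularity follows from the implicit function theorem, which is exactly your local-diffeomorphism step, while the global bijectivity is the causal/topological input you invoke. There is nothing to add.
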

We remark that the additional regularity mentioned above follows by the same proof as in \cite[Lemma 3.1]{KroenerInv}, the implicit function theorem being the main theorem used in the proof. 

From now on we assume to work in local coordinates of the special coordinate system given by \eqref{GKS}.

The flow hypersurfaces can be written as graphs over $\mathcal{S}_0$
\begin{equation}
	M(t) = \{x^0 = u(x^i): x = (x^i) \in \mathcal{S}_0\},
\end{equation}
where we use the symbol $x$ ambiguously by denoting points $p = (x^\alpha) \in N$ as well as points $p = (x^i) \in \mathcal{S}_0$.
\noindent
Now suppose the flow hypersurfaces are given by an embedding $x = x(t, \xi)$, where $\xi = (\xi^i)$ are local coordinates of a compact manifold $M$, i.e. initially we have the embedding $x:M \rightarrow N$, $M_0 := x(M)$. Then there holds
\begin{equation}
\begin{split}
&x^0 = u(t, \xi) = u(t, x(t,\xi)),\\
&x^i = x^i(t, \xi).
\end{split}
\end{equation}
The induced metric has the form
\begin{equation}
\label{NotMetric}
g_{ij} = e^{2\psi}\{-u_i u_j + \si_{ij}\},
\end{equation}
where $\si_{ij}$ is evaluated at $(u(x), x)$. Its inverse $(g^{ij}) = (g_{ij})^{-1}$ can be expressed as
\begin{equation}
\label{InvMetric}
	g^{ij} = e^{-2\psi}\big\{\si^{ij} + \frac{\breve{u}^i}{v}\, \frac{\breve{u}^j}{v}\big\},
\end{equation}
where $(\si^{ij}) = (\si_{ij})^{-1}$ and we distinguish $u^i = g^{ij}u_j$ and
\begin{equation}
	\breve{u}^i = \si^{ij}u_j
\end{equation}
and where we define
\begin{equation}
\label{v}
	v^2 = 1 - \si^{ij}u_i u_j \equiv 1 - |Du|^2.
\end{equation}
Hence, graph $u$ is spacelike if and only if $|Du| < 1$, in view of \eqref{NotMetric}.

The past-directed normal has the form
\begin{equation}
\label{normal}
	(\nu^\alpha) = - v^{-1}e^{-\psi}(1, \breve{u}^i).
\end{equation}

Furthermore, looking at the component $\alpha = 0$ in the gaussian formula, we obtain
	\begin{equation}
	\label{EvHU}
		e^{-\psi}\tilde{v} \, h_{ij} = - u_{;ij} - \bar{\Gamma}^0_{00}u_i u_j - \bar{\Gamma}^0_{0i}u_j - \bar{\Gamma}^0_{0j}u_i - \bar{\Gamma}^0_{ij},
	\end{equation}
where the covariant derivatives are taken with respect to the induced metric of the considered hypersurface and $\tilde{v} = v^{-1}$. For later use, we reformulate the above expression as in \cite[(2.5.11)]{GerhCP}, such that
\begin{equation}
\label{EvHUlin}
	e^{-\psi}\tilde{v}\, h_{ij} = -v^{-2}u_{ij} + e^{-\psi}\bar{h}_{ij} - e^{-\psi} \psi_\alpha\bar{\nu}^{\alpha}\bar{g}_{ij}+ v^{-1}e^{-\psi}\psi_{\alpha}\nu^\alpha g_{ij},
\end{equation}
where $\bar{\nu}$, $\bar{g}_{ij}$ and $\bar{h}_{ij}$ denote the normal, metric and second fundamental form of the coordinate slices $\{x^0 = \textnormal{const}\}$ and where the covariant derivatives of $u$ are now taken with respect to the metric $\si_{ij}(u, x)$.

In the Lorentzian case controlling the $C^1$-norm of graph $u$ is tantamount to controlling $\tilde{v}$ in view of \eqref{v} and
\begin{equation}
	||Du||^2 = g^{ij}u_iu_j = e^{-2\psi}\frac{|Du|^2}{v^2}.
\end{equation}
Finally, as for the curvature flows with general curvature functions we had to assume the existence of a strictly convex function $\chi \in C^2(\bar{\Om})$ in a given domain $\Om$, we shall state a geometric condition guaranteeing the existence of such a function. For a proof of the following Lemma see \cite[Lemma 1.8.3]{GerhCP}.
\begin{Lemma}
\label{ExistenceStrictlyConvex}
	Let $N$ be a smooth, globally hyperbolic Lorentzian manifold, $\mathcal{S}_0$ a Cauchy surface, $(x^\alpha)$ a future directed Gaussian coordinate system associated with $\mathcal{S}_0$ and $\bar{\Om} \subset N$ compact. Then there exists a strictly convex function $\chi \in C^2(\bar{\Om})$, i.e. a function satisfying
	\begin{equation}
		\chi_{\alpha\beta} \geq c_0 \bar{g}_{\alpha\beta}
	\end{equation}
	with a positive constant $c_0$, provided the level hypersurfaces $\{x^0 = \textnormal{const}\}$ that intersect $\bar{\Om}$ are strictly convex.
\end{Lemma}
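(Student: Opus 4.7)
The natural ansatz is to seek $\chi=\chi(x^0)$ depending only on the time function, exploiting directly the convexity assumption on the slices. A straightforward computation of the ambient Hessian of such a function gives
$$
\chi_{\alpha\beta} = \chi''(x^0)\,\delta^0_\alpha\,\delta^0_\beta - \chi'(x^0)\,\bar{\Gamma}^0_{\alpha\beta},
$$
so the problem reduces to controlling the Christoffel symbols $\bar{\Gamma}^0_{\alpha\beta}$ in the coordinate system \eqref{GKS} and choosing $\chi$ appropriately.

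The key observation is that, differentiating $\bar{g}_{ij}=e^{2\psi}\sigma_{ij}$ in the $x^0$-direction, the spatial block $\bar{\Gamma}^0_{ij}$ is, up to a positive factor, the negative of the second fundamental form $\bar{h}_{ij}$ of the slices $\{x^0=\mathrm{const}\}$, plus a benign term in $\psi_0$. Hence the assumption that the slices intersecting $\bar{\Omega}$ are strictly convex, combined with compactness of $\bar{\Omega}$, furnishes a uniform constant $c>0$ with $-\bar{\Gamma}^0_{ij}\ge c\,\sigma_{ij}$ on $\bar{\Omega}$. The remaining components $\bar{\Gamma}^0_{00}$ and $\bar{\Gamma}^0_{0i}$ are simply uniformly bounded on $\bar{\Omega}$.

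One then takes $\chi(x^0)=e^{\mu x^0}$ with $\mu>0$ to be fixed. Then $\chi'=\mu\chi$ and $\chi''=\mu^2\chi$, so $\chi_{ij}\ge c\mu\chi\,\sigma_{ij}$, the diagonal entry satisfies $\chi_{00}=\mu^2\chi-\mu\chi\,\bar{\Gamma}^0_{00}=\mu^2\chi+O(\mu\chi)$, and the mixed entries are controlled by $|\chi_{0i}|\le C\mu\chi$. To verify the inequality $\chi_{\alpha\beta}-c_0\,\bar{g}_{\alpha\beta}\ge 0$, one applies a Schur-complement argument to the resulting block matrix: the spatial block is at least $(c\mu-c_0 e^{2\psi})\sigma_{ij}$, the $00$-entry grows like $\mu^2\chi + c_0 e^{2\psi}$, and the cross-term penalty $|\chi_{0i}|^2/(c\mu\chi)$ is only of order $\mu\chi$. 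Thus, for $\mu$ chosen sufficiently large, the full matrix is positive, uniformly on $\bar{\Omega}$, for some $c_0>0$.

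The main obstacle is the sign bookkeeping for the Christoffel symbols (in particular, isolating the piece corresponding to the second fundamental form of the slices and handling the $\psi_0$-contributions) and the correct interpretation of the tensor inequality in Lorentzian signature. The former is a routine if somewhat tedious computation using the explicit form \eqref{GKS}, while the latter is resolved by the elementary observation that the $O(\mu^2)$ entry in the timelike direction dominates the $O(\mu)$ off-diagonal terms once $\mu$ is taken large enough.
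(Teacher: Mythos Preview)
Your approach is correct and is the standard argument; the paper does not supply its own proof but simply refers to \cite[Lemma 1.8.3]{GerhCP}, where the construction is exactly of this type---take $\chi=\chi(x^0)$, use $(x^0)_{;\alpha\beta}=-\bar\Gamma^0_{\alpha\beta}$ together with the relation $e^{-\psi}\bar h_{ij}=-\bar\Gamma^0_{ij}$ for the coordinate slices (this is \eqref{EvHU} with $u$ constant), and choose $\chi$ sufficiently convex so that the $00$-entry dominates the mixed terms. Two minor remarks: the identity $-\bar\Gamma^0_{ij}=e^{-\psi}\bar h_{ij}$ is exact, so there is no separate ``benign $\psi_0$-term'' to split off in the spatial block; and the tensor inequality $\chi_{\alpha\beta}\ge c_0\bar g_{\alpha\beta}$ just means positive semidefiniteness of the difference as a bilinear form, so the Lorentzian signature enters only through the favourable sign $\bar g_{00}<0$ in the $00$-entry.
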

%
%
%
%
We consider a curvature function $F \in C^{m,\alpha}(\Gamma)$, $2 \leq m \in \bbn$, $0< \alpha <1$, a function $f = f(t)$ and a real function $\Phi \in C^{m,\alpha}(\bbr)$ and write from now on $\Phi = \Phi(F)$.

The curvature flow is then given by the evolution problem \eqref{floweq} with $f=f_k$, $0\leq k \leq n$, as defined in \eqref{globTerm} (where we remark again that not all values of $k$ are allowed for $F=H$ or $F=\si_2$, see the remarks after equation \eqref{globTerm}).

We will assume throughout the next sections that short time existence has already been assured and we consider a solution $x \in H^{m+\alpha, \frac{m+\alpha}{2}}(Q_{T^*})$ of the curvature flow on a maximal interval $[0, T^*)$, $0 < T^* \leq \infty$, where $Q_T = [0, T) \times M$. Short-time existence is well known for the curvature flow without the global term as we are dealing with a parabolic problem and with a fixed point argument we can extend this result to the flow including the global term. This will be supplemented in section \ref{shorttime}.

Hence we consider a sufficiently smooth solution of the initial value problem \eqref{floweq} and show how the geometric quantities of the hypersurfaces $M(t)$ evolve. All time derivatives are \textit{total} derivatives, i.e., covariant derivatives of tensor fields defined over the curve $x(t)$, cf. \cite[Chapter 11.5]{GerhAna}.
	
First, we consider the evolution equations for the hypersurfaces represented as graphs.

Looking at the component $\alpha = 0$ of the flow \eqref{floweq} we obtain the scalar flow equation
\begin{equation}
\label{totalU}
	\dot{u} = -e^{-\psi}v^{-1}(\Phi - f),
\end{equation}
where the time derivative is a total time derivative of $u = u(t, x(t, \xi))$. If however we consider $u$ to depend on $u = u(t, \xi)$ we obtain the partial derivative
\begin{equation}
\label{partialU}
\begin{split}
	\frac{\partial u}{\partial t} &= \dot{u} - u_k\dot{x}^k\\
	&= -e^{-\psi}v(\Phi - f).
\end{split}
\end{equation}
Let us now state the evolution equations, where we note that all covariant derivatives appearing in these equations are taken with respect to the induced metric of the flow hypersurfaces:
\begin{Lemma} We have the following evolution equations:
{\allowdisplaybreaks
\begin{align} 
\label{EvMetrik}
	&\dot{g}_{ij} = 2(\Phi - f) h_{ij}, \\
\label{EvVolumen}
	& \frac{d}{dt}\sqrt{g} = (\Phi - f) H \sqrt{g},\quad \textnormal{where } g = \det{g_{ij}},\\
\label{EvNormal}
	& \dot{\nu} = g^{ij}\Phi_i x_j,\\
\label{EvSecFF1}
	& \dot{h}^j_i = \Phi_{;i}^{\,j} - (\Phi - f)\{h^k_ih^j_k +\bar{R}_{\alpha\beta\gamma\delta}\nu^\alpha x_i^\beta\nu^\gamma x_k^\delta g^{kj}\},\\
\label{EvSecFF1b}
	& \dot{h}_{ij} = \Phi_{;ij} - (\Phi - f)\{- h^k_ih_{kj} + \bar{R}_{\alpha\beta\gamma\delta}\nu^\alpha x_i^\beta\nu^\gamma x_j^\delta\},\\
\label{EvPhi}
	& \dot{\Phi} - \Phi' F^{ij}\Phi_{;ij} = -\Phi' (\Phi - f)\{F^{ij}h_i^kh_{kj} + F^{ij}\bar{R}_{\alpha\beta\gamma\delta}\nu^\alpha x_i^\beta \nu^\gamma x_j^\delta\}, \\
	& \quad \textnormal{where } \Phi' = \frac{d}{dr} \Phi(r)\notag,\\
\label{EvParGraph}
	 &\dot{u} - \Phi' F^{ij}u_{ij} = -e^{-\psi}\tilde{v}(\Phi - f) + \Phi'Fe^{-\psi}\tilde{v}\\
	 &\quad + \Phi' F^{ij}\{\bar{\Gamma}^0_{00}u_iu_j + 2 \bar{\Gamma}^0_{0i}u_j + \bar{\Gamma}^0_{ij}\}\notag ,\\
\label{EvV}
	& \dot{\tilde{v}} - \Phi' F^{ij}\tilde{v}_{;ij} = - \Phi' F^{ij}h_{ik}h^k_j \tilde{v} + [(\Phi - f) - \Phi' F] \eta_{\alpha\beta} \nu^\alpha \nu^\beta\\
		& \quad - 2\Phi' F^{ij}h^k_j x_i^\alpha x_k^\beta \eta_{\alpha\beta} - \Phi' F^{ij}\eta_{\alpha \beta \gamma} x_i^\beta x_j^\gamma \nu^\alpha \notag\\
		& \quad - \Phi' F^{ij} \bar{R}_{\alpha\beta\gamma\delta} \nu^\alpha x_i^\beta x_k^\gamma x_j^\delta \eta_\epsilon x_l^\epsilon g^{kl}\notag ,\\
		& \quad \textnormal{where } \eta \textnormal{ is the covariant vector field } (\eta_\alpha) = e^\psi (-1, 0, ..., 0) \notag ,\\
\label{EvSecFF2}
	& \dot{h}^j_i - \Phi' F^{kl}h^j_{i;kl} = - \Phi' F^{kl}h_{rk}h^r_lh_i^j + \Phi' Fh_{ri}h^{rj}\\
		& \quad - (\Phi - f)h^k_ih_k^j + \Phi' F^{kl,rs}h_{kl;i}h_{rs;}^{\quad j} + \Phi'' F_iF^j \notag\\
		& \quad + 2\Phi' F^{kl}\bar{R}_{\alpha\beta\gamma\delta}x_m^\alpha x_i^\beta x_k^\gamma x_r^\delta h_l^mg^{rj} - \Phi' F^{kl}\bar{R}_{\alpha\beta\gamma\delta} x_m^\alpha x_k^\beta x_r^\gamma x_l^\delta h_i^mg^{rj} \notag\\
		& \quad -\Phi' F^{kl}\bar{R}_{\alpha\beta\gamma\delta}x_m^\alpha x_k^\beta x_i^\gamma x_l^\delta h^{mj} - \Phi' F^{kl}\bar{R}_{\alpha\beta\gamma\delta}\nu^\alpha x_k^\beta \nu^\gamma x_l^\delta h_i^j \notag\\ 
		& \quad + \Phi' F \bar{R}_{\alpha\beta\gamma\delta}\nu^\alpha x_i^\beta \nu^\gamma x_m^\delta g^{mj} - (\Phi - f) \bar{R}_{\alpha\beta\gamma\delta}\nu^\alpha x_i^\beta \nu^\gamma x_m^\delta g^{mj} \notag\\
		& \quad + \Phi' F^{kl}\bar{R}_{\alpha\beta\gamma\delta ;\epsilon}\{\nu^\alpha x_k^\beta x_l^\gamma x_i^\delta x_m^\epsilon g^{mj} + \nu^\alpha x_i^\beta x_k^\gamma x_m^\delta x_l^\epsilon g^{mj}\}.\notag
\end{align}
}
\end{Lemma}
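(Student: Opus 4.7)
The plan is to derive each equation by direct calculation from the Gauss formula $x_{ij} = h_{ij}\nu$ (with past-directed, timelike $\nu$, so $\langle\nu,\nu\rangle = -1$), the Weingarten relation $\nu_i = h_i^k x_k$, and the Codazzi and Gauss equations from Section 2. The key simplification is that $f = f_k(t)$ depends only on $t$, so $f_{;i} = 0$ and spatial covariant derivatives of $\Phi - f$ agree with those of $\Phi$; this is what makes the forcing term harmless in all bulk computations.

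I would first establish the simple identities. Differentiating $g_{ij} = \bar g_{\alpha\beta} x_i^\alpha x_j^\beta$ in $t$, commuting $\partial_t$ with $\partial_{\xi^i}$, and inserting $\dot x = (\Phi - f)\nu$ via the Gauss formula gives \eqref{EvMetrik}, from which \eqref{EvVolumen} follows via $\partial_t \sqrt g = \tfrac{1}{2} g^{ij}\dot g_{ij}\sqrt g$. For $\dot\nu$, differentiate $\langle\nu,\nu\rangle = -1$ and $\langle\nu, x_i\rangle = 0$ and use $\dot x_i = \nabla_i\bigl((\Phi - f)\nu\bigr) = \Phi_i \nu + (\Phi - f)h_i^k x_k$: this forces $\dot\nu$ to be tangential with coefficient $\Phi^i$, yielding \eqref{EvNormal}. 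The formula \eqref{EvSecFF1b} then comes from differentiating $\nu_i = h_i^k x_k$ in $t$, applying Ricci's identity to commute $\partial_t$ and $\nabla_i$ on $\nu$, and extracting the curvature term $\bar R_{\alpha\beta\gamma\delta}\nu^\alpha x_i^\beta \dot x^\gamma x_j^\delta$; the Lorentzian sign $\langle\nu,\nu\rangle = -1$ is responsible for the sign of the $-h_i^k h_{kj}$ term. Equation \eqref{EvSecFF1} follows by combining with $\dot g^{ij} = -2(\Phi - f)h^{ij}$, and \eqref{EvPhi} by contracting \eqref{EvSecFF1b} with $\Phi' F^{ij}$ and using $\Phi_{;ij} = \Phi'' F_{;i} F_{;j} + \Phi' F_{;ij}$.

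The scalar graph equations \eqref{totalU}, \eqref{partialU} and \eqref{EvParGraph} are obtained by reading off the $\alpha = 0$ component of the flow, using \eqref{normal} for $\nu^0$ and the identity \eqref{EvHU} to replace $h_{ij}$ by $u_{;ij}$ up to the Christoffel terms $\bar\Gamma^0_{00}, \bar\Gamma^0_{0i}, \bar\Gamma^0_{ij}$. For \eqref{EvV} I would write $\tilde v = -\eta_\alpha \nu^\alpha$ with $\eta_\alpha = e^\psi(-1, 0, \ldots, 0)$ viewed as a smooth ambient covector field, compute $\tilde v_{;i}$ and $\tilde v_{;ij}$ using Gauss and Weingarten, and then apply the parabolic operator $\partial_t - \Phi' F^{ij}\nabla_i\nabla_j$ using \eqref{EvNormal} and \eqref{EvMetrik}; the various curvature and covariant-derivative-of-$\eta$ terms on the right-hand side appear at this stage.

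The main obstacle is \eqref{EvSecFF2}, the Laplacian-form evolution for $h_i^j$. Here I would start from \eqref{EvSecFF1} and rewrite $\Phi_{;i}^{\,j} = (\Phi' F)_{;i}^{\,j}$ as $\Phi' F^{kl} h^j_{i;kl}$ plus lower-order corrections, using the Simons-type identity obtained by applying the Ricci identity to Codazzi. This produces the reaction terms $-\Phi' F^{kl}h_{rk}h^r_l h_i^j + \Phi' F h_{ri}h^{rj}$ (once more with Lorentzian sign from the Gauss equation), the $\Phi'' F_i F^j$ term from the chain rule on $\Phi''$, and the collection of six ambient curvature and curvature-derivative terms involving $\bar R_{\alpha\beta\gamma\delta}$ and $\bar R_{\alpha\beta\gamma\delta;\epsilon}$. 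Conceptually no new idea is required beyond the Riemannian derivation in \cite[Chapter 2]{GerhCP}, but every step must be rechecked against the sign change in the Gauss equation, and the index bookkeeping for the covariant-derivative commutators in a Lorentzian ambient space is where the real care is needed.
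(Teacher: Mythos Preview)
Your proposal is correct and follows exactly the route taken in \cite[Chapter 2]{GerhCP}, which is what the paper simply cites; the paper gives no independent argument beyond that reference. One small slip: with $(\eta_\alpha) = e^\psi(-1,0,\ldots,0)$ and the past-directed normal \eqref{normal} you get $\tilde v = \eta_\alpha\nu^\alpha$, not $-\eta_\alpha\nu^\alpha$, but this does not affect the derivation of \eqref{EvV}.
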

\begin{proof}
	See \cite[Lemma 2.3.1 for the first two equations, then Lemma 2.3.2, Lemma 2.3.3 for the next two equations, Lemma 2.3.4, equation \eqref{partialU} together with \eqref{EvHU}, Lemma 2.4.4, Lemma 2.4.1]{GerhCP}.
\end{proof}

\section{Height estimates and volume preservation of the flow}
\label{C0}
First, we remind the definition of the mixed volume, for $k \in \{0, \ldots, n\}$ and a hypersurface $M$ represented by a graph $u$ we have:
%
%
%
%
\begin{equation}
	V_{n+1-k} = 
	\begin{cases}
	\int_{\mathcal{S}_0}{\int_0^u{{e^{\psi}\sqrt{\bar{g}}}}} , & k = 0\\
	\{(n+1) {n \choose k} \}^{-1}\int_{M}{H_{k-1}\, \mathrm{d\mu}}, & k = 1, \ldots, n,
	\end{cases}
\end{equation}
where for $(t, x) \in N$ we denote by $\bar{g}(t, x) = \det(\bar{g}_{ij})(t,x)$ the volume element of the level hypersurface $x^0 = t$ at the point $x \in \calS_0$. The choice of the reference point $0 \in (a,b)$ for the enclosed volume is arbitrary. Now we are going to prove the claimed volume preservation property of the flow \eqref{floweq} with $f=f_k$, $0\leq k\leq n$.
%
%
%
%
\begin{Lemma}
\label{VolPreservation}
	For $k=0$ the enclosed volume $V_{n+1}$ and for $k=1$ the volume of the hypersurfaces $V_n$ is preserved.\\
	If the ambient space has vanishing sectional curvatures, then for $1 <k \leq n$ the mixed volume $V_{n+1-k}$ is preserved.
\end{Lemma}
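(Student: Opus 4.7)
The plan is to differentiate $V_{n+1-k}(t)$ in each case and identify the result with an integral that vanishes by the very choice of $f_k$ in \eqref{globTerm}.

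For $k=0$, I differentiate the iterated integral in $x^0$ and apply \eqref{partialU}; combined with $\sqrt{\bar g}=e^{n\psi}\sqrt{\si}$ (from \eqref{GKS}) and $\sqrt{g}=e^{n\psi}v\sqrt{\si}$ (from \eqref{NotMetric}), the $e^{\psi}\sqrt{\bar g}$ boundary factor cancels the $e^{-\psi}v$ in $\partial_t u$, leaving $\frac{d}{dt}V_{n+1}=-\int_{M_t}(\Phi-f_0)\,d\mu=0$ by the definition of $f_0$. For $k=1$, $V_n$ is a constant multiple of the hypersurface volume, and \eqref{EvVolumen} gives at once $\frac{d}{dt}V_n=\frac{1}{(n+1)n}\int_{M_t}(\Phi-f_1)H\,d\mu=0$.

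The nontrivial case is $2\le k\le n$ under the flatness hypothesis. Viewing $H_{k-1}$ as a function of $(h^j_i)$ and inserting \eqref{EvSecFF1}, the Riemann contribution drops by flatness, and at a point where $h^j_i$ is diagonal with eigenvalues $\kappa_i$ one obtains $\dot H_{k-1}=(H_{k-1})^{ij}\Phi_{;ij}-(\Phi-f_k)\sum_i\kappa_i^2\,\partial H_{k-1}/\partial\kappa_i$. The key algebraic input is the Newton-type identity $\sum_i\kappa_i^2\,\partial H_{k-1}/\partial\kappa_i=H\,H_{k-1}-kH_k$, obtained by multiplying Lemma \ref{symPol}(iv) (written as $H_{k-1}=\partial H_k/\partial\kappa_i+\kappa_i\,\partial H_{k-1}/\partial\kappa_i$) by $\kappa_i$, summing over $i$, and using $\sum_i\kappa_i\,\partial H_k/\partial\kappa_i=kH_k$. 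Combining with \eqref{EvVolumen}, the two $(\Phi-f_k)H\,H_{k-1}$ contributions cancel exactly, leaving $\frac{d}{dt}(H_{k-1}\sqrt g)=\{(H_{k-1})^{ij}\Phi_{;ij}+k(\Phi-f_k)H_k\}\sqrt g$.

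After integration over the closed hypersurface $M_t$, the second summand vanishes by the definition of $f_k$. The first summand is killed by Lemma \ref{divFree}: since $N$ is flat, $H_{k-1}$ is of class $(D)$, so $(H_{k-1})^{ij}{}_{;j}=0$ and $(H_{k-1})^{ij}\Phi_{;ij}=((H_{k-1})^{ij}\Phi_{;i})_{;j}$ is a pure divergence whose integral vanishes by Stokes' theorem. The main obstacle is the derivation of $\dot H_{k-1}$; in particular the Lorentzian sign in the $h^k_ih^j_k$-term of \eqref{EvSecFF1} (opposite to the Riemannian analogue) is precisely what makes the $H\,H_{k-1}$-contributions cancel rather than reinforce each other. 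Once this sign bookkeeping and the Newton identity are in place, the integration by parts via Lemma \ref{divFree} is immediate.
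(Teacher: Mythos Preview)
Your proof is correct and follows essentially the same route as the paper's. For $k=0,1$ the arguments coincide; for $2\le k\le n$ the paper simply records the outcome
\[
(n+1){\textstyle\binom{n}{k}}\frac{d}{dt}\int_{M_t}H_{k-1}\,d\mu_t
=\int_{M_t}kH_k(\Phi-f_k)\,d\mu_t
-\int_{M_t}(\Phi-f_k)(H_{k-1})^i_j\bar R_{\alpha\beta\gamma\delta}\nu^\alpha x_i^\beta\nu^\gamma x_l^\delta g^{lj}\,d\mu_t
\]
and invokes Lemma~\ref{divFree} and Lemma~\ref{symPol}, whereas you spell out the intermediate algebra (the Newton identity and the cancellation of the $(\Phi-f_k)HH_{k-1}$ contributions) before integrating by parts; the ingredients and the use of flatness are the same.
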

\begin{proof}
	First we observe that for $x\in \mathcal{S}_0$ we have 
	\begin{equation}
		\sqrt{g(u(x), x)} = v\sqrt{\det(\bar{g}_{ij}(u(x), x))},
	\end{equation}
	where $\bar{g}_{ij}(t, \cdot)$ denotes the metric of the level hypersurface $x^0 = t$.
	
	Taking this into account, we have for $k=0$ in view of \eqref{partialU}:
	\begin{equation}
	\begin{split}
		\frac{d}{dt} V_{n+1} &= \int_{\mathcal{S}_0}{\frac{\partial u}{\partial t}\,e^\psi\sqrt{\bar{g}(u(x), x)} \, \mathrm{dx}}\\
		&= -\int_{\mathcal{S}_0}{(\Phi - f_0) \sqrt{g(u(x),x)}\,\mathrm{dx}} = 0,
	\end{split}
	\end{equation}
	in view of the definition of $f_0$.
	Hence the enclosed volume is preserved by the flow.
	
	For $k=1$ we have in view of \eqref{EvVolumen}
	\begin{equation}
	\begin{split}
		(n+1)n \frac{d}{dt} V_n = \frac{d}{dt} |M_t|
		= \int_{M_t}{(\Phi - f_1) H\, \mathrm{d\mu_t}} = 0.
	\end{split}
	\end{equation}
	Finally, for $1 < k \leq n$ we assume the ambient space has vanishing sectional curvatures. Then we exploit Lemma \ref{divFree} and Lemma \ref{symPol}.\\
	We get
	\begin{equation}
	\begin{split}
	(n+1){n \choose k}&\frac{d}{dt} \int_{M_t}{H_{k-1} \mathrm{d\mu_t}} = \int_{M_t}{kH_k(\Phi - f_k) \mathrm{d\mu_t}} \\
	&- \int_{M_t}{(\Phi - f_k)(H_{k-1})^i_j\bar{R}_{\alpha\beta\gamma\delta}\nu^\alpha x_i^\beta \nu^\gamma x_k^\delta g^{kj} \mathrm{d\mu_t}}\\
	&= k \int_{M_t}{(\Phi - f_k) H_k \,\mathrm{d\mu_t}} = 0.
	\end{split}
	\end{equation}
\end{proof}
%
%
%
%
In order to prove the $C^0$-estimates we will show that the curvature function is bounded during the evolution. Together with the monotonicity of the constant $F$-curvature hypersurfaces, which we will prove afterwards, we then obtain that the flow stays within the domain bounded by the barriers for all times.
\begin{Lemma}
\label{FBoundsLemma}
Let
\begin{equation}
c_1 := \underset{M_0}{\min} \,\Phi(F) \quad \textnormal{ and }\quad c_2 := \underset{M_0}{\max} \,\Phi(F),
\end{equation}
then there holds for all times $0 < t < T^*$
\begin{equation}
\label{FBounds}
c_1 \leq \Phi(F) \leq c_2.
\end{equation}
Moreover $\Phi^{\sup}(t) := \underset{M_t}{\max} \,\Phi$ is monotonically decreasing and $\Phi^{\inf}(t) := \underset{M_t}{\min} \,\Phi$ is monotonically increasing.
\end{Lemma}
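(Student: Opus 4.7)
The plan is to apply Hamilton's parabolic maximum principle to the evolution equation \eqref{EvPhi}. Since $\Phi' > 0$ by assumption, the sign of the right-hand side of \eqref{EvPhi} is governed by $-(\Phi - f_k)\{F^{ij}h_i^k h_{kj} + F^{ij}\bar{R}_{\alpha\beta\gamma\delta}\nu^\alpha x_i^\beta \nu^\gamma x_j^\delta\}$. The first task is to show that the bracketed expression is pointwise non-negative along the flow. In every admissible cone considered ($\bbr^n$ for $F=H$ with $k=0$, otherwise $\Gamma_1$, $\Gamma_2$, or $\Gamma_+$) the tensor $(F^{ij})$ is positive definite by strict monotonicity of $F$ on $\Gamma$; simultaneously diagonalising $(F^{ij})$ with $(h^i_j)$ then gives $F^{ij}h_i^k h_{kj} = \sum_i F^{ii}\kappa_i^2 \geq 0$. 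For the curvature term I would split into two cases: when $F = H$ one has $F^{ij} = g^{ij}$, and the identity $g^{ij}x_i^\beta x_j^\delta = \bar g^{\beta\delta} + \nu^\beta\nu^\delta$ together with antisymmetry of $\bar R$ in the first pair of indices collapses the contraction to $\bar R_{\alpha\gamma}\nu^\alpha\nu^\gamma$, which is $\geq 0$ by TCC \eqref{TCC}; for the remaining curvature functions an $F^{ij}$-adapted orthonormal tangent frame $(e_i)$ writes the contraction as $\sum_i F^{ii}\bar R_{\alpha\beta\gamma\delta}\nu^\alpha e_i^\beta \nu^\gamma e_i^\delta$, a non-negative combination of the quantities appearing in \eqref{NPTSC}.

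Next I would note that $f_k$ is a weighted average of $\Phi$ with positive weights. Admissibility on $[0, T^*)$ keeps the principal curvatures inside $\Gamma$, and in each of the three scenarios this forces $H_k > 0$ on $M_t$ (trivially for $k=0$; from the definition of $\Gamma_k$ combined with Lemma \ref{symPol}(i) when $k \geq 1$). Hence
\begin{equation*}
\Phi^{\inf}(t) \leq f_k(t) \leq \Phi^{\sup}(t) \qquad \forall\, t \in [0, T^*).
\end{equation*}

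Combining the two ingredients via Hamilton's trick closes the argument. At a spatial maximum point $\xi_t$ of $\Phi(t, \cdot)$ on $M_t$ we have $\Phi_{;i}(t,\xi_t) = 0$ and $(\Phi_{;ij})(t,\xi_t)$ negative semi-definite, so $\Phi' F^{ij}\Phi_{;ij} \leq 0$ there; combined with $\Phi(t,\xi_t) - f_k(t) = \Phi^{\sup}(t) - f_k(t) \geq 0$ and the non-negativity of the curvature bracket, \eqref{EvPhi} yields $\dot\Phi(t,\xi_t) \leq 0$, hence $\frac{d}{dt}\Phi^{\sup}(t) \leq 0$ in the upper Dini sense, i.e.\ $\Phi^{\sup}$ is monotonically decreasing. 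The dual argument at a spatial minimum, where $\Phi_{;ij} \geq 0$ and $\Phi^{\inf}(t) - f_k(t) \leq 0$, gives $\dot\Phi \geq 0$ there and so $\Phi^{\inf}$ is monotonically increasing. Together with the initial data $\Phi^{\inf}(0) = c_1$ and $\Phi^{\sup}(0) = c_2$ this delivers \eqref{FBounds}.

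The only real technical hurdle is the sign analysis of the curvature contraction, which is precisely where the ambient hypotheses \eqref{TCC} (for $F = H$) and \eqref{NPTSC} (for the remaining cases) come in. As emphasised in the introduction, once these geometric assumptions yield non-negativity of the bracket, the argument works for any bounded global term; the specific form of $f_k$ enters only through the elementary weighted-average bound above.
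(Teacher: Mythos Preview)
Your argument is correct and follows essentially the same route as the paper: both proofs hinge on the non-negativity of the bracket $F^{ij}h_i^kh_{kj} + F^{ij}\bar R_{\alpha\beta\gamma\delta}\nu^\alpha x_i^\beta\nu^\gamma x_j^\delta$ (via \eqref{TCC} for $F=H$, via \eqref{NPTSC} otherwise) together with the weighted-average bound $\Phi^{\inf}\leq f_k\leq\Phi^{\sup}$, and then apply the parabolic maximum principle to \eqref{EvPhi}. The only cosmetic difference is that the paper implements the maximum principle through an $\eps$-perturbation first-touching argument (setting $\tilde\Phi=\Phi-c_2-\eps t-\eps$ and deriving a contradiction at the first zero), whereas you invoke Hamilton's trick directly; the paper in fact uses exactly your device later in Lemma~\ref{LemmaLipschitz} and Lemma~\ref{Fexp}.
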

\begin{proof}
We will prove \eqref{FBounds} holds until $T_0$, where $0 < T_0 < T^*$ is arbitrary. This will prove the first statement and the second one follows by observing that the argument holds as well for the interval $[t_1, T^*)$, where $0 < t_1 < T^*$ is arbitrary.

We only prove the upper bound, the proof for the lower bound follows analogously.

Let
\begin{equation}
\tilde{\Phi} := (\Phi(F) - c_2) - \eps t - \eps,
\end{equation}
where $\eps >0$ is chosen arbitrarily. Therefore there holds
\begin{equation}
\tilde{\Phi}_{|t=0} < 0
\end{equation}
and from \eqref{EvPhi} we get
\begin{equation}
\label{EvPhiTilde}
	\dot{\tilde{\Phi}} - \Phi' F^{ij} \tilde{\Phi}_{;ij} = -\Phi' (\Phi - f) \{F^{ij}h_{ik}h^k_j + F^{ij}\bar{R}_{\alpha\beta\gamma\delta}\nu^\alpha x_i^\beta \nu^\gamma x_j^\delta\} - \eps.
\end{equation}
Suppose there is a point $(t_0, x_0)$, $0 < t_0 \leq T_0$, $x_0 \in \mathcal{S}_0$ such that
\begin{equation}
	\tilde{\Phi}(t_0, x_0) = 0
\end{equation}
and where $t_0$ is the first time for this to happen.

Hence there holds for all $x\in \mathcal{S}_0$
\begin{equation}
	\tilde{\Phi}(t_0, x) \leq \tilde{\Phi}(t_0, x_0)
\end{equation}
and this implies by the definition of $\tilde{\Phi}$ and $f_k$
\begin{equation}
f_{k}(t_0) \leq \Phi(t_0, x_0).
\end{equation}
Evaluating \eqref{EvPhiTilde} at $(t_0, x_0)$ yields therefore in view of the maximum principle
\begin{equation}
	0 \leq - \eps < 0,
\end{equation}
where we used the timelike convergence condition \eqref{TCC} in case of the mean curvature flow and \eqref{NPTSC} for general curvature functions as well as the non-negativity of the term $F^{ij}h_{ik}h^{k}_j$.

This contradiction implies
\begin{equation}
	\Phi < \eps t + \eps + c_2 \leq \eps T_0 + \eps + c_2
\end{equation}
and the Lemma follows because $\eps$ can be chosen arbitrarily small.
\end{proof}
%
%
%
%
Note that this Lemma ensures in the case of $F=H$ or $F=\si_2$, that the principal curvatures of the flow lie in the cone of definition, where we use Lemma \ref{FHineq} for $F= \si_2$. Furthermore, with regard to the supplementary function, the proof merely depends on the fact that $\Phi' > 0$.

We also want to point out the following observation, which can be used to prove long time existence for bounded, but otherwise more general global force terms, than the ones we consider in this paper, as long as we have suitable barriers:
\begin{Bemerkung}
\label{FBoundsRem}
	The proof of the preceding Lemma shows that for a global term $f = f(t)$ which is bounded from below and above by $c_1$ and $c_2$ respectively, where these are arbitrary constants, the curvature function $\Phi(F)$ is bounded by the same constants if $\Phi(F)_{|t=0}$ is.
\end{Bemerkung}
%
%
%
%
Now we want to show the monotonicity of hypersurfaces with respect to their $F$-curvature. For the mean curvature in a cosmological spacetime the proof can be found in \cite[Lemma 4.7.1]{GerhCP}, for general $F$ the proof needs a minor modification:
\begin{Lemma}
\label{FMonotone}
Let $N$ be a smooth cosmological spacetime with compact Cauchy-hypersurface $\calS_0$ and non-positive timelike sectional curvatures, $F$ a strictly monotone curvature function defined on an open, convex, symmetric cone $\Gamma$, $F \in C^1(\Gamma)\cap C^0(\bar{\Gamma})$, such that $F$ vanishes on the boundary of $\Gamma$ and $F > 0$ in $\Gamma$. 

Let $M_i = \text{graph}\, u_i$, $i=1,2$ be two compact, connected, spacelike, admissible hypersurfaces of class $C^2$, such that the respective $F$-curvatures $F_i$ satisfy
\begin{equation}
\label{F1lessF2}
	F_1 < (\le) \,\underset{M_2}{\min} \,F_2,
\end{equation}
and if $p \in \calS_0$ is a point such that $F_1(u_1(p), p) = \underset{M_2}{\min} \, F_2$, then we assume that the principal curvatures of $M_2$ at $(u_2(p), p)$ are not all equally to zero.

Then there holds
\begin{equation}
	u_1 < (\leq)\, u_2.
\end{equation}
\end{Lemma}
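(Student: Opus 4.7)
I would argue by contradiction via a quasilinear elliptic comparison. Writing \eqref{EvHUlin} as $h_{ij} = h_{ij}(u, Du, D^2 u, x)$ and composing with the curvature function yields a scalar equation $F = \mathcal{F}(u, Du, D^2 u, x)$ for graphs, with $F_i = \mathcal{F}(u_i, Du_i, D^2 u_i, x)$ on $M_i$. Suppose the conclusion fails: then on the compact Cauchy hypersurface $\calS_0$ the continuous function $w := u_1 - u_2$ attains a maximum $t^* \ge 0$ (resp.\ $t^* > 0$ in the non-strict version) at some $p_0 \in \calS_0$, at which $Du_1(p_0) = Du_2(p_0)$ and the ordinary Hessians satisfy $u_{1,ij}(p_0) \le u_{2,ij}(p_0)$, while $u_1(p_0) \ge u_2(p_0)$. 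The goal is to show $F_1(p_0) \ge F_2(p_0) \ge \min_{M_2} F_2$, contradicting \eqref{F1lessF2}.

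I would split this into two monotonicities of $\mathcal{F}$. The \emph{Hessian monotonicity} is immediate from \eqref{EvHUlin}: the coefficient of $u_{ij}$ is $-v^{-1}e^{\psi} < 0$, reflecting the past-directed normal choice, so larger Hessian gives algebraically smaller $h_{ij}$; monotonicity of $F$ on $\Gamma$ then yields $\mathcal{F}(u_1, Du_1, D^2 u_1, p_0) \ge \mathcal{F}(u_1, Du_1, D^2 u_2, p_0)$. The \emph{shift monotonicity} $\partial \mathcal{F}/\partial u \ge 0$---the only ``minor modification'' from $F = H$ to general $F$---is where \eqref{NPTSC} enters. One differentiates \eqref{EvHUlin} in $u$, using that the $u$-derivatives of $\bar h_{ij}$, $\psi$, $\sigma_{ij}$ and $\bar g_{ij}$ along the coordinate-slice foliation are controlled by Riccati/Raychaudhuri-type identities, and contracts with $F^{ij}$. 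The resulting expression for $F^{ij}\partial h_{ij}/\partial u$ decomposes into non-negative algebraic contributions plus a curvature term of the form $F^{ij}\bar R_{\alpha\beta\gamma\delta}\nu^\alpha x_i^\beta \nu^\gamma x_j^\delta$, which is $\ge 0$ because $F^{ij}$ is positive semidefinite on the symmetric endomorphisms of $TM$ and $(X,Y)\mapsto \bar R(\nu,X,\nu,Y)$ is a positive semidefinite bilinear form on $TM$ by \eqref{NPTSC}. For $F = H$ this collapses to the trace formulation controlled by \eqref{TCC} used in \cite[Lemma 4.7.1]{GerhCP}. Applied to $u_1(p_0) \ge u_2(p_0)$, the shift monotonicity yields $\mathcal{F}(u_1, Du_1, D^2 u_2, p_0) \ge \mathcal{F}(u_2, Du_2, D^2 u_2, p_0) = F_2(p_0)$.

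Chaining the two inequalities gives $F_1(p_0) \ge F_2(p_0) \ge \min_{M_2} F_2$, a contradiction with the strict hypothesis. In the non-strict case the chain collapses to equalities, so $F_2(p_0) = \min_{M_2} F_2$ is attained at $\hat p := (u_2(p_0), p_0)$ and the second fundamental forms (after the time-shift $\tilde u_1 := u_1 - t^*$) of $M_1$ and $M_2$ agree at $\hat p$. Since by hypothesis the principal curvatures of $M_2$ at $\hat p$ are not all zero, $F^{ij}$ at $\hat p$ is a nondegenerate positive semidefinite bilinear form on a spanning set of tangent directions, rendering the linearized operator uniformly elliptic in a neighborhood; the Hopf strong maximum principle applied to the quasilinear comparison inequality satisfied by $\tilde u_1 - u_2$ near $p_0$ then forces $\tilde u_1 \equiv u_2$ locally, and by connectedness of $\calS_0$ globally---but then $F_1 \equiv F_2 \equiv \min_{M_2} F_2$ reduces matters to the strict case and gives back a contradiction at any point where the technical hypothesis applies. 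The main obstacle in carrying out this plan is the shift monotonicity: the Riccati/Raychaudhuri computation showing that the $F^{ij}$-trace of the $u$-derivative of \eqref{EvHUlin} is non-negative under \eqref{NPTSC}, generalizing Gerhardt's TCC-based computation for $F = H$.
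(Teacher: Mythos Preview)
Your approach is genuinely different from the paper's, and the step you yourself flag as the ``main obstacle'' --- the shift monotonicity $\partial_u\mathcal F\ge 0$ --- does not reduce to \eqref{NPTSC} in the way you sketch. At a point where $Du\neq 0$ the coordinate vector $\partial_{x^0}$ decomposes as $-\tilde v e^{\psi}\nu + T$ with a nonzero tangential part $T$; only the normal component produces the Riccati expression. Differentiating \eqref{EvHUlin} in $u$ at fixed $(p,r,x)$ brings in the $x^0$-derivatives of $\sigma_{ij}$, $\psi$, $\bar g_{ij}$ and $\bar h_{ij}$, and the Riccati identity you invoke is the one for the \emph{coordinate slices} (normal $\bar\nu$), not for the graph (normal $\nu$). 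After contraction with $F^{ij}$ you do not obtain the clean term $F^{ij}\bar R_{\alpha\beta\gamma\delta}\nu^\alpha x_i^\beta\nu^\gamma x_j^\delta$ you claim, but rather a curvature term built from $\bar\nu$ together with several cross terms of indeterminate sign. The admissibility of the intermediate ``virtual'' configurations $(u_1,p,D^2u_2)$ is also not addressed.

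The paper circumvents all of this. Instead of comparing at the coordinate maximum of $u_1-u_2$, it assumes $\{u_2<u_1\}\neq\emptyset$, takes the \emph{Lorentzian distance function} $\bar d$ from $M_2$, and works in the associated normal Gaussian system on a tubular neighbourhood of a maximal geodesic realising $d(M_2,M_1)=d_0>0$. There the level hypersurfaces $M(t)=\{\bar d=t\}$ flow by the unit normal, so the evolution of $h^j_i$ is the pure Riccati equation and contraction with $F^j_i$ gives
\[
\dot{\bar F}=F^l_k h^m_l h^k_m + F^l_k\bar R_{\alpha\beta\gamma\delta}\nu^\alpha x_l^\beta\nu^\gamma x_m^\delta g^{mk}\ge 0
\]
directly from \eqref{NPTSC}; admissibility is preserved since $F$ vanishes only on $\partial\Gamma$. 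The touching comparison between $M_1$ (approximated by level sets $\tilde M(s)$ in a tube around $M_1$, to avoid a possible focal point at $p_1$) and $M(d_0+s)$ then takes place at the \emph{same spacetime point with the same tangent plane}, so only the Hessian comparison is needed --- no shift in $u$ occurs at all. The ``minor modification'' from Gerhardt's mean-curvature proof is simply replacing \eqref{TCC} by \eqref{NPTSC} in this Riccati step; it is not a new shift-type argument.

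Two further points. Your reading of the ``not all zero'' hypothesis is off: it is not about nondegeneracy of $F^{ij}$ (strict monotonicity of $F$ already gives that), but about forcing $F^{ij}h_{ik}h^k_j>0$ so that $\dot{\bar F}>0$ strictly, producing the $\eps$-improvement that rules out the borderline equality case. And your Hopf step for $\tilde u_1-u_2$ presupposes an elliptic inequality for this difference that you cannot write down without first having the shift monotonicity, so the non-strict case is not handled either.
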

\begin{proof}
	In view of the equation \eqref{EvHU} and the maximum principle it suffices to show
	\begin{equation}
	\label{u1kleineru2}
		u_1 \leq u_2.
	\end{equation}
	Now suppose \eqref{u1kleineru2} is not valid, so that
	\begin{equation}
		E(u_1) = \{x \in \mathcal{S}_0: u_2(x) < u_1(x)\} \neq \emptyset.
	\end{equation}
	Thus there exist points $p_i \in M_i$ such that
	\begin{equation}
		0 < d_0 = d(M_2, M_1) = d(p_2, p_1) = \sup\,\{d(p,q): (p,q) \in M_2 \times M_1\},
	\end{equation}
	where $d$ is the Lorentzian distance function, which is finite and continuous in our setting, see \cite[Theorem 12.5.9]{GerhAna}.
	
	Now let $\phi$ be a maximal geodesic from $M_2$ to $M_1$ realizing this distance with endpoints $p_2$ and $p_1$ and parametrized by arc length. 
	
	Denote by $\bar{d}$ the Lorentzian distance function to $M_2$, i.e. for $p\in I^+(M_2)$
	\begin{equation}
		\bar{d}(p) = \underset{q\in M_2}{\sup} d(q, p).
	\end{equation}
	Since $\phi$ is maximal, $\Lambda = \{\phi(t): 0\leq t < d_0\}$ contains no focal points of $M_2$, cf \cite[Theorem 34, p.285]{ONeill}, hence there exists an open neighbourhood $\Pi = \Pi(\Lambda)$ such that $\bar{d}$ is of class $C^2$ in $\Pi$, cf \cite[Theorem 1.9.15]{GerhCP} and $\Pi$ is part of the largest tubular neighbourhood of $M_2$ and hence covered by an associated normal Gaussian coordinate system $(x^\alpha)$ satisfying $x^0 = \bar{d}$ in $\{x^0 >0\}$, see \cite[Theorem 1.9.22]{GerhCP}.
	
	In this coordinate system $M_2$ is the level set $\{\bar{d} = 0\}$ and the level sets
	\begin{equation}
		M(t) = \{p \in \Pi: \bar{d}(p) = t\}
	\end{equation}
	are $C^2$-hypersurfaces.
	
	Next we want to derive a formula for the evolution of the $F$-curvature of the level hypersurfaces of this coordinate system. Let us define the flow of the level hypersurfaces by
	\begin{align}
		&\dot{x} = - \nu,\\
		&x(0) = x_0,
	\end{align}
	where $x_0$ is the embedding of $M_2$. Then we infer from \cite[Proposition 1.9.4]{GerhCP}
	\begin{equation}
	\label{Tubenumg}
		x, \,\dot{x} \in C^1((-\eps_0, d_0) \times B_\rho(\xi_0)),
	\end{equation}
	where $\eps_0 >0$, $x_0(\xi_0) = \phi(0)$ and $(-\eps_0, d_0) \times x_0(B_\rho(\xi_0)) \subset \Pi$. 
	Hence $g_{ij} = \langle x_i, x_j \rangle$ as well as $h_{ij} = - \frac{1}{2} \dot{g}_{ij}$ (see equation \eqref{EvMetrik} with $f=1$) are continuously differentiable with respect to space and time. From \eqref{EvSecFF1} we then obtain the equation
	\begin{equation}
		\dot{h}_j^i = h^i_k h^k_j + \bar{R}_{\alpha \beta \gamma \delta}\nu^\alpha x_k^\beta \nu^\gamma x_j^\delta g^{ki},
	\end{equation}
	where we note that in view of \eqref{Tubenumg} one can verify that $\bar{R}_{\alpha \beta \gamma \delta}\nu^\alpha \nu^\gamma = \bar{R}_{0\beta 0\delta}$ is continuous in this coordinate system, since $\bar{\Gamma}^0_{\alpha \beta} = \frac{1}{2}\bar{g}_{\alpha \beta,0}$.
	
	For the $F$-curvature of $M(t)$ we obtain then the equation 
	\begin{equation}
	\label{FRise}
		\dot{\bar{F}} = \bar{F}^l_k\bar{h}_l^m \bar{h}_m^k + \bar{F}_k^l\bar{R}_{\alpha\beta\gamma\delta}\nu^\alpha x_l^\beta \nu^\gamma x_m^\delta \bar{g}^{mk},
	\end{equation}
	where the geometric quantities like $\bar{g}_{ij}$, $\bar{h}_{ij}$ and so on denote the geometric quantities of the level hypersurfaces and they are not to be confused with the quantities of the ambient space.
	This implies that the $F$-curvature of $M(t)$ is monotonically increasing with respect to $t$ in view of the strict monotonicity of the $F$-curvature, hence the level hypersurfaces are admissible, since $F$ vanishes only on $\partial \Gamma$.

	Next, consider a tubular neighbourhood $\mathcal{U}$ of $M_1$ with corresponding normal Gaussian coordinate system $(x^\alpha)$. The level sets
	\begin{equation}
	\tilde{M}(s) = \{ x^0 = s\}\quad, -\delta < s < 0,
	\end{equation}
	lie in the past of $M_1 = \tilde{M}(0)$ and are all of class $C^2$ for small $\delta$.
	
	Since the geodesic $\phi$ is normal to $M_1$, it is also normal to $\tilde{M}(s)$ and the length of the geodesic segment of $\phi$ from $\tilde{M}(s)$ to $M_1$ is exactly $-s$, thus equal to the distance from $\tilde{M}(s)$ to $M_1$, hence we deduce
	\begin{equation}
		d(M_2, \tilde{M}(s)) = d_0 + s.
	\end{equation}
	We infer that $\{\phi(t): 0\leq t \leq d_0 +s \}$ also represents a maximal geodesic from $M_2$ to $\tilde{M}(s)$ and we conclude further that, for fixed $s$, the hypersurface $\tilde{M}(s) \cap \Pi$ is contained in the past of $M(d_0+s)$ and touches $M(d_0+s)$ in $p_s = \phi(d_0 +s)$. 
	
	Hence by the maximum principle there holds
	\begin{equation}
		F_{|\tilde{M}(s)}(p_s) \geq F_{|M(d_0+s)}(p_s). 
	\end{equation}
	Furthermore, if 
	\begin{equation}
	\label{FGleichheit}
		F_{1|\phi(0)} = \underset{M_2}{\min}\, F_2,
	\end{equation}
	then by using the additional assumption we conclude that if we choose $\delta > 0$ small enough, then in view of \eqref{FRise} there exists $\eps > 0$ not depending on $s$, $-\delta < s < 0$, such that there holds
	\begin{equation}
		F_{|M(d_0+s)}(p_s) > \underset{M_2}{\min}\,F_2 + \eps.
	\end{equation}
	On the other hand the $F$-curvature of $\tilde{M}(s)$ converges to the $F$-curvature of $M_1$ if $s$ tends to zero, hence we conclude
	\begin{equation}
	\label{Finequality}
		F_1(p_1) \geq \underset{M_2}{\min}\, F_2 + \eps,
	\end{equation}
	where $\eps > 0$ if \eqref{FGleichheit} is satisfied (otherwise it can be equal to zero), yielding in either case a contradiction to \eqref{F1lessF2}.
	
\end{proof}

%
%
%
%
The barrier condition and the preceding Lemmata imply the following
\begin{Proposition}
	If $u_i = $ graph $M_i$, $i = 1,2$, where $M_1$ and $M_2$ denote the lower and upper barrier respectively, then there holds
	\begin{equation}
		u_1 \leq u(t) \leq u_2.
	\end{equation}
\end{Proposition}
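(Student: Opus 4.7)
The plan is to combine the uniform curvature bound from Lemma \ref{FBoundsLemma} with the geometric maximum principle of Lemma \ref{FMonotone}. First I would invoke Lemma \ref{FBoundsLemma}, which gives $\min_{M_0}\Phi(F) \leq \Phi(F)_{|M_t} \leq \max_{M_0}\Phi(F)$ for all $t \in [0, T^*)$. Since $\Phi' > 0$, applying $\Phi^{-1}$ yields the pointwise bound
\begin{equation*}
c_1 \leq F_{|M_t} \leq c_2, \qquad c_1 = \underset{M_0}{\min}\, F, \quad c_2 = \underset{M_0}{\max}\, F.
\end{equation*}

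The defining barrier inequalities of Assumption \ref{MainAssumption}, namely $F_{|M_1} \leq c_1$ and $F_{|M_2} \geq c_2$, then chain together with the trapped range to give, for every $t$,
\begin{equation*}
F_{|M_1} \leq \underset{M_t}{\min}\, F \quad \text{and} \quad \underset{M_t}{\max}\, F \leq F_{|M_2}.
\end{equation*}
The remaining step is to apply Lemma \ref{FMonotone} twice: once with $M_1$ in the role of the hypersurface with smaller $F$-curvature and $M_t$ in the role of the one with larger, yielding $u_1 \leq u(t)$; and once with $M_t$ now of smaller curvature and the future barrier $M_2$ of larger, yielding $u(t) \leq u_2$.

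The only delicate point will be the borderline configuration in Lemma \ref{FMonotone}, whose weak-inequality version demands that at any hypothetical touching point the principal curvatures of the ``upper'' hypersurface are not all zero. For $F \in (K^*)$ or $F = \si_2$, admissibility places all principal curvatures inside $\Gamma_+$ or $\Gamma_2$ where $F > 0$ strictly, so the compactness of the hypersurfaces forces $c_1, c_2 > 0$; at a touching point the relevant curvature equals $c_1$ or $c_2$, which is positive, and so the principal curvatures cannot all vanish. For $F = H$ with $\Gamma = \bbr^n$ the constants $c_i$ may vanish, and this is precisely why Assumption \ref{MainAssumption} requires the corresponding barrier to be strict in that exceptional case; the strict inequality then propagates to the touching point, and the strict version of Lemma \ref{FMonotone} applies. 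I expect verifying this case split to be the only real obstacle in the argument.
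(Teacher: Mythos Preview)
Your proposal is correct and follows essentially the same route as the paper: combine the curvature bounds of Lemma \ref{FBoundsLemma} with the comparison principle of Lemma \ref{FMonotone}. One small technical point the paper flags and you skim over: for $F=H$ with $\Gamma=\bbr^n$ the hypotheses of Lemma \ref{FMonotone} that $F>0$ in $\Gamma$ and $F_{|\partial\Gamma}=0$ are not literally satisfied, so strictly speaking you invoke the \emph{proof} of that lemma rather than the lemma itself; this is harmless because those hypotheses are used only to keep the auxiliary level hypersurfaces admissible, which is automatic when $\Gamma=\bbr^n$.
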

In the case $F= H$, $\Gamma = \bbr^n$, this Proposition follows by the proof of Lemma \ref{FMonotone}, since now all appearing hypersurfaces are admissible.

%
%
%
%
Lemma \ref{FMonotone} also yields the uniqueness of constant $F$-curvature hypersurfaces:
\begin{Korollar}
	Let $N$ be as in Lemma \ref{FMonotone} and $F$ be a curvature function of class $(K)$ defined on $\Gamma_+$ or $F= \si_k$, $1\leq k \leq n$, defined on $\Gamma_k$. Then a compact, connected, spacelike hypersurface of class $C^2$ with $F \equiv c$ for some constant $c > 0$, is uniquely determined.
\end{Korollar}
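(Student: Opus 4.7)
The plan is to reduce the uniqueness to a symmetric application of Lemma \ref{FMonotone}. Let $M_1$ and $M_2$ be two compact, connected, spacelike hypersurfaces of class $C^2$ with $F \equiv c > 0$. By the graph lemma from Section~3 each can be written as $M_i = \text{graph}\, u_i$ over $\calS_0$. Both hypersurfaces are admissible in the open-cone sense: since $F$ vanishes on $\partial \Gamma$ (by the definition of class $(K)$, and because $\Gamma_k$ is the connected component of $\{H_k > 0\}$ containing the positive cone in the case $F = \si_k$), the identity $F \equiv c > 0$ forces the principal curvatures to lie in the interior of $\Gamma$.

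I would then apply Lemma \ref{FMonotone} in its non-strict form to the ordered pair $(M_1, M_2)$. The assumption $F_1 \le \underset{M_2}{\min}\, F_2$ holds as an equality everywhere, since both sides equal $c$, so the supplementary hypothesis of the lemma needs to be checked at every $p \in \calS_0$: namely, that the principal curvatures of $M_2$ at $(u_2(p), p)$ are not all zero. This is exactly what the admissibility observation above guarantees, because $(0, \ldots, 0) \in \partial \Gamma$ and $F$ vanishes there, whereas $F_2 \equiv c > 0$. The lemma then yields $u_1 \le u_2$. Swapping the roles of $M_1$ and $M_2$ produces the reverse inequality $u_2 \le u_1$, hence $u_1 = u_2$ and $M_1 = M_2$.

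I do not anticipate any real obstacle here; all the delicate work has already been done inside the proof of Lemma \ref{FMonotone}, in particular the construction of a normal Gaussian tube around a maximizing geodesic between the two hypersurfaces and the monotonicity identity \eqref{FRise} for the $F$-curvature along the level sets of the Lorentzian distance function. The only step needing care is verifying the non-vanishing-principal-curvatures condition, which is immediate from $c > 0$ together with the fact that $F$ vanishes on the boundary of its cone of definition.
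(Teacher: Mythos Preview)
Your proposal is correct and follows exactly the route the paper intends: the Corollary is stated immediately after Lemma \ref{FMonotone} with the remark that the lemma ``also yields the uniqueness of constant $F$-curvature hypersurfaces,'' and no further proof is given. Your symmetric application of the non-strict case of the lemma, together with the observation that $F\equiv c>0$ forces the principal curvatures to be nonzero (so the supplementary hypothesis is satisfied at every point), is precisely the argument the paper leaves implicit.
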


\section{Gradient estimates}
Let $\Phi$ be a function in $C^{2,\alpha}(\bbr)$, which satisfies
\begin{equation}
	\Phi' > 0 \quad \text{ and }\quad \Phi'' \leq 0.
\end{equation}
Let $f =f(t)$ be a bounded function and suppose $C^0$-estimates have already been established, i.e. the flow stays inside a compact region $\bar{\Om} \subset N$. Let $F$ be a curvature function which is monotone, concave and homogeneous of degree 1. Then we show,
following the proof in \cite[Section 5]{EnzDiplom},
\begin{Proposition}
\label{C1estimates}
	During the evolution of the flow \eqref{floweq} the term $\tilde{v}$ is uniformly bounded:
	\begin{equation}
		\tilde{v} \leq c = c(\Om, |\Phi|_0, |\Phi'|_0).
	\end{equation} 
\end{Proposition}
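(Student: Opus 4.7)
The plan is to apply the parabolic maximum principle to the auxiliary function
\begin{equation*}
w = \log \tilde v + \lambda \chi,
\end{equation*}
restricted to the flow, where $\chi$ is the strictly convex function on $\bar\Omega$ supplied by Assumption \ref{MainAssumption} (for the case $F=H$ one can replace $\chi$ by a suitable function of the time coordinate $x^0$) and $\lambda > 0$ will be chosen sufficiently large. Because $\chi$ is bounded on $\bar\Omega$ and the $C^0$-estimate confines the flow to $\bar\Omega$, a uniform upper bound on $w$ is equivalent to a uniform upper bound on $\tilde v$.

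To compute $\dot w - \Phi'F^{ij}w_{;ij}$, combine \eqref{EvV} (divided by $\tilde v$ with the extra logarithmic term $\Phi'\tilde v^{-2}F^{ij}\tilde v_{;i}\tilde v_{;j}$) with the identity for $\chi$ obtained from the Gau\ss{} formula $\chi_{;ij} = \chi_{\alpha\beta}x_i^\alpha x_j^\beta + \chi_\alpha\nu^\alpha h_{ij}$ and $F^{ij}h_{ij}=F$ (degree-one homogeneity):
\begin{equation*}
\dot\chi - \Phi'F^{ij}\chi_{;ij} = (\Phi - f - \Phi'F)\,\chi_\alpha\nu^\alpha - \Phi'F^{ij}\chi_{\alpha\beta}x_i^\alpha x_j^\beta.
\end{equation*}
Strict convexity $\chi_{\alpha\beta}\geq c_0\bar g_{\alpha\beta}$ yields the crucial negative term $-\lambda c_0 \Phi' F^{ij}g_{ij}$, complementing the negative term $-\Phi'F^{ij}h_{ik}h^k_j$ already present in the evolution of $\log\tilde v$.

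Next, evaluate at a first point $(t_0,\xi_0)$ where $w$ reaches a new maximum (if no such point exists we are done by the initial data); there $\dot w \geq 0$, $\Phi'F^{ij}w_{;ij}\leq 0$, and the critical-point condition gives $\tilde v^{-1}\tilde v_{;i} = -\lambda\chi_{;i}$. Substitute and absorb the remaining terms of \eqref{EvV} as follows: the linear-in-$h$ terms $F^{ij}h^k_j T_{ik}$ (with $T_{ik}$ a bounded ambient tensor expressed in terms of $\eta_{\alpha\beta}$ and curvature) are split by Cauchy-Schwarz
\begin{equation*}
|F^{ij}h^k_j T_{ik}| \leq \varepsilon F^{ij}h_{ik}h^k_j + C_\varepsilon F^{ij}g_{ij},
\end{equation*}
so that a fraction of the quadratic good term swallows them, and the residual $F^{ij}g_{ij}$-terms are dominated by $\lambda c_0 F^{ij}g_{ij}$ for large $\lambda$. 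The Riemann-curvature and Christoffel contributions are handled similarly using the uniform bound $F^{ij}g_{ij}\geq F(1,\dots,1)>0$ coming from concavity and degree one.

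The main obstacle is the term $\tilde v^{-1}[(\Phi-f)-\Phi'F]\eta_{\alpha\beta}\nu^\alpha\nu^\beta$ together with the contribution $(\Phi-f-\Phi'F)\chi_\alpha\nu^\alpha$ from $\dot\chi$: since $\nu^\alpha=O(\tilde v)$ in the Gaussian coordinates, both scale like $O(\tilde v)$ after the logarithmic division, whereas the good terms produced so far are of order $F^{ij}h_{ik}h^k_j$ and $F^{ij}g_{ij}$ without an explicit $\tilde v$-factor. To close the estimate at the maximum one uses the Codazzi-type formula $\tilde v_{;i} = h^k_i\eta_k + \eta_{\alpha\beta}x_i^\beta\nu^\alpha$ (obtained from $\tilde v = \eta_\alpha\nu^\alpha$ and the Weingarten equation) combined with $\tilde v_{;i}=-\lambda\tilde v\chi_{;i}$; this expresses a particular contraction of $h^k_i$ in terms of quantities of order $\lambda\tilde v$, and the resulting quadratic lower bound $F^{ij}h_{ik}h^k_j \geq c(\lambda)\tilde v^2 - C$ (using positivity of $F^{ij}$ on $\Gamma$) provides a quadratic negative term dominating the $O(\tilde v)$ bad contributions once $\lambda$ is chosen large enough. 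This forces $\dot w \leq 0$ at the putative maximum unless $\tilde v$ is already bounded by a constant depending only on $\Omega$, $|\Phi|_0$, $|\Phi'|_0$, and the geometry of $\chi$, which proves the proposition.
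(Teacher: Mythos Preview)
There is a genuine gap. The test function $w=\log\tilde v+\lambda\chi$ does not close the estimate, for two related reasons.

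First, you underestimate the bad terms. In the Lorentzian setting the inverse metric satisfies $g^{ij}\le c\,\tilde v^{2}\sigma^{ij}$ (Lemma \ref{C1PrepLemma}), so the contractions you call ``bounded'' carry hidden powers of $\tilde v$ when measured with respect to $g$. Concretely, after dividing \eqref{EvV} by $\tilde v$, the $\eta_{\alpha\beta\gamma}$-term, the Riemann term, and the residue from your Cauchy--Schwarz split of the linear-in-$h$ term are all of order $\tilde v^{2}F^{ij}g_{ij}$, not $F^{ij}g_{ij}$. A contribution $\lambda c_{0}F^{ij}g_{ij}$ with $\lambda$ fixed cannot absorb $c\,\tilde v^{2}F^{ij}g_{ij}$.

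Second, and more seriously, the lower bound $F^{ij}h_{ik}h^{k}_{j}\ge c(\lambda)\tilde v^{2}$ does not follow from your critical-point relation. At the maximum of $w$ one has $\tilde v_{;i}=-\lambda\tilde v\,\chi_{;i}$, hence, with $\eta_{k}=-e^{\psi}u_{k}$,
\[
h_{i}^{k}\eta_{k}=-\lambda\tilde v\,\chi_{;i}+O(\tilde v).
\]
But the tangential gradient $\chi_{;i}=\chi_{\alpha}x_{i}^{\alpha}$ of a fixed ambient function is unrelated to $Du$: it may vanish, and even when nonzero it need not be large in a direction where $\eta_{k}$ is. The displayed relation therefore gives at best an \emph{upper} bound $|h_{i}^{k}\eta_{k}|\le c(\lambda)\tilde v$, not a lower bound on any eigenvalue of $h$.

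The paper circumvents both issues by taking $w=\tilde v\,\phi$ with $\phi=e^{\mu e^{\lambda u}}$, $\mu=\tfrac14$, $\lambda<0$, $|\lambda|$ large. The whole point of this choice is that $D\log\phi=\mu\lambda e^{\lambda u}\,Du$ is \emph{parallel to} $Du$, hence to $(\eta_{k})$. At the critical point, in a diagonalising frame one picks an index $i_{0}$ with $|u_{i_{0}}|^{2}\ge\tfrac1n\|Du\|^{2}$ (bounded below once $\tilde v\ge 2$); the relation $\tilde v_{;i}=-\tilde v\,\mu\lambda e^{\lambda u}u_{i}$ then forces $\kappa_{i_{0}}\le\tfrac12\mu\lambda e^{\lambda u}\,e^{-\psi}\tilde v<0$. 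Concavity of $F$ now yields $F^{ij}h_{ik}h^{k}_{j}\ge c\,\mu^{2}\lambda^{2}e^{2\lambda u}F^{ij}g_{ij}\,\tilde v^{2}$, which for $|\lambda|$ large dominates every bad term. Note also that this argument does not invoke the strictly convex function $\chi$ at all, consistent with the fact that the proposition must cover the case $F=H$ where no such $\chi$ is assumed.
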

We can allow for such a general supplementary function $\Phi$ as above, because we already established bounds for $\Phi$ in Lemma \ref{FBoundsLemma}.

First we need some Lemmata:
\begin{Lemma}
	The composite function 
	\begin{equation}
	\label{C1phi}
		\phi = e^{\mu e^{\lambda u}},
	\end{equation}
	where $\mu$, $\lambda$ are constants, satisfies the equation
	\begin{equation}
	\begin{split}
		\dot{\phi} - \Phi' F^{ij}\phi_{ij} = &\,e^{-\psi}\tilde{v}\{\Phi'F - \Phi + f\} \mu\lambda e^{\lambda u} \phi \\
		&+\Phi'F^{ij}\{\bar{\Gamma}^0_{00}u_iu_j + 2 \bar{\Gamma}^0_{0i}u_j + \bar{\Gamma}^0_{ij}\}\mu\lambda e^{\lambda u}\phi \\
		&- [1+ \mu e^{\lambda u}] \Phi'F^{ij}u_i u_j\mu \lambda^2 e^{\lambda u}\phi.
	\end{split}
	\end{equation}
\end{Lemma}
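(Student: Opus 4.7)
The plan is to do a direct chain-rule computation. Set $w = \mu e^{\lambda u}$ so that $\phi = e^w$, and think of $\phi$ as a function of $(t,\xi)$ via $u = u(t,\xi)$. All derivatives of $\phi$ are then straightforward to express in terms of those of $u$.

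First I would compute the total time derivative: $\dot{\phi} = \phi \, \mu\lambda e^{\lambda u}\, \dot{u}$. For the spatial derivatives I use
\begin{equation*}
\phi_i = \phi\, \mu\lambda e^{\lambda u}\, u_i,
\qquad
\phi_{ij} = \phi\bigl[w_{ij} + w_i w_j\bigr],
\end{equation*}
where $w_{ij} = \mu\lambda e^{\lambda u}\, u_{ij} + \mu\lambda^2 e^{\lambda u}\, u_i u_j$ and $w_i w_j = \mu^2\lambda^2 e^{2\lambda u}\, u_i u_j$. Adding these and factoring gives
\begin{equation*}
\phi_{ij} = \phi\, \mu\lambda e^{\lambda u}\, u_{ij} \;+\; \phi\, \mu\lambda^2 e^{\lambda u}\bigl[1 + \mu e^{\lambda u}\bigr]\, u_i u_j.
\end{equation*}

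Combining the two, the operator $\dot{\phi} - \Phi' F^{ij}\phi_{ij}$ factors as
\begin{equation*}
\dot{\phi} - \Phi' F^{ij}\phi_{ij} = \phi\, \mu\lambda e^{\lambda u}\,\bigl[\dot{u} - \Phi' F^{ij} u_{ij}\bigr] \;-\; \phi\, \mu\lambda^2 e^{\lambda u}\bigl[1 + \mu e^{\lambda u}\bigr]\,\Phi' F^{ij} u_i u_j.
\end{equation*}
Now I would substitute the evolution equation \eqref{EvParGraph} for the bracket $\dot{u} - \Phi' F^{ij}u_{ij}$, namely
\begin{equation*}
\dot{u} - \Phi' F^{ij} u_{ij} = e^{-\psi}\tilde{v}\bigl[\Phi' F - \Phi + f\bigr] + \Phi' F^{ij}\bigl\{\bar{\Gamma}^0_{00}u_i u_j + 2\bar{\Gamma}^0_{0i}u_j + \bar{\Gamma}^0_{ij}\bigr\},
\end{equation*}
and reading off each of the three terms on the right-hand side of the claimed identity term-by-term finishes the proof.

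There is no genuine obstacle here; the calculation is a pure application of the chain rule combined with the already-derived scalar flow equation for $u$. The only point requiring care is bookkeeping: one must remember that $\dot{u}$ and $u_{ij}$ in \eqref{EvParGraph} refer to the total time derivative of $u$ along the flow and the covariant Hessian with respect to the induced metric $g_{ij}$ respectively, and that the same conventions apply to $\dot{\phi}$ and $\phi_{ij}$, so that the chain-rule identities above are consistent and no extra Christoffel terms appear.
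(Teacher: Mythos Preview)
Your proof is correct and is exactly the approach the paper takes: the paper simply remarks that the lemma follows from \eqref{EvParGraph}, i.e.\ the chain-rule computation you wrote out, substituting the scalar evolution equation for $\dot u - \Phi' F^{ij}u_{ij}$.
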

The Lemma follows from \eqref{EvParGraph}.\\
For a proof of the following two Lemmata we refer to \cite{GerhScalar}:
\begin{Lemma}
\label{C1PrepLemma}
	There is a constant $c=c(\Om)$ such that for any positive function $0 < \eps = \eps(x)$ on $\mathcal{S}_0$ and any hypersurface $M(t)$ of the flow we have
\begin{align}
	|||\nu||| &\leq c \tilde{v},\\
	g^{ij} &\leq c \tilde{v}^2 \si^{ij},\\
	F^{ij} &\leq F^{kl}g_{kl}g^{ij},\\
	|F^{ij}h_j^kx_i^\alpha x_k^\beta \eta_{\alpha\beta}| &\leq \frac{\eps}{2}F^{ij}h_i^kh_{kj}\tilde{v} + \frac{c}{2\eps}F^{ij}g_{ij}\tilde{v}^3,\\
	|F^{ij}\eta_{\alpha\beta\gamma}x_i^\beta x_j^\gamma \nu^\alpha| &\leq c \tilde{v}^3 F^{ij}g_{ij},\\
	|F^{ij}\bar{R}_{\alpha\beta\gamma\delta}\nu^\alpha x_i^\beta x_k^\gamma x_j^\delta \eta_\eps x_l^\eps g^{kl}| &\leq c\tilde{v}^3 F^{ij}g_{ij},\\
	\label{C1VectorEst}
	|||x_i^\alpha\xi^i||| &\leq c \tilde{v} \quad \forall (p, \xi) \in TM(t).
\end{align}
\end{Lemma}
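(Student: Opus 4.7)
The strategy is to reduce every inequality either to a direct substitution using the coordinate formulae from Section~3 or to a weighted Cauchy--Schwarz estimate exploiting that the tensor $F^{ij}$ is positive semi-definite (by monotonicity of $F$). All constants will end up depending only on $\bar{\Om}$ through the bounds on $\psi$, $\sigma_{ij}$ and ambient curvature quantities.

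Inequality~(i) is obtained by plugging $(\nu^\alpha) = -v^{-1}e^{-\psi}(1,\breve{u}^i)$ into the definition of $|||\cdot|||$ and using $\sigma_{ij}\breve{u}^i \breve{u}^j = \sigma^{ij}u_iu_j = 1-v^2$, which gives $|||\nu|||^2 = v^{-2}(2-v^2) \leq 2\tilde v^2$. For~(ii) I substitute \eqref{InvMetric}; the rank-one tail $\breve{u}^i\breve{u}^j/v^2$ is controlled pointwise by $(1-v^2)\sigma^{ij}/v^2 \leq \tilde v^2 \sigma^{ij}$ after applying Cauchy--Schwarz in the positive form $\sigma^{ij}$. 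Inequality~(iii) is proved at a fixed point by simultaneously diagonalising $g$ and $h$: in such a frame $F^{ij}$ is diagonal with non-negative entries $F^{ii}$ (monotonicity), so each entry is bounded by the trace $\sum_k F^{kk} = F^{kl}g_{kl}$, which is precisely the stated inequality.

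The weighted inequalities~(iv)--(vi) all arise from the basic identity
\begin{equation*}
|F^{ij}A_iB_j| \leq \tfrac{\eps}{2}F^{ij}A_iA_j + \tfrac{1}{2\eps}F^{ij}B_iB_j,
\end{equation*}
valid for every $\eps > 0$ since $F^{ij}$ is positive semi-definite. For~(iv) I take $A_j = h_j^{\,k}x_k^\beta(\cdot)_\beta$ paired so that $F^{ij}A_iA_j$ reproduces $F^{ij}h_i^kh_{kj}$ up to a factor of $\tilde v$, while the remaining factor is a pure ambient contraction involving $\eta_{\alpha\beta}$ and the frame $x_i^\alpha$. The ambient tensors $\eta_{\alpha\beta}$, $\eta_{\alpha\beta\gamma}$, $\bar R_{\alpha\beta\gamma\delta}$ have bounded components in the Gaussian coordinate system on $\bar\Om$; any contraction of the form $x_i^\alpha x_j^\beta T_{\alpha\beta}$ is estimated by $\bar g_{\alpha\beta}x_i^\alpha x_j^\beta = g_{ij}$ plus normal components, which by~(i) contribute additional powers of $\tilde v$. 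Estimates~(v) and~(vi) then follow from~(ii)--(iii) by replacing $F^{ij}$ by $F^{kl}g_{kl}g^{ij}$ and then $g^{ij}$ by $c\tilde v^2\sigma^{ij}$, collecting the powers of $\tilde v$ produced by each normal factor.

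For the final inequality I interpret $\xi$ as a $g$-unit vector and write $X^\alpha = x_i^\alpha \xi^i$. In graph coordinates we have $X^0 = u_i\xi^i$ and $X^i = \xi^i$, hence
\begin{equation*}
\tilde g_{\alpha\beta}X^\alpha X^\beta = g_{ij}\xi^i\xi^j + 2e^{2\psi}(u_i\xi^i)^2,
\end{equation*}
and Cauchy--Schwarz in $\sigma$ gives $(u_i\xi^i)^2 \leq (1-v^2)\sigma_{ij}\xi^i\xi^j$; combined with $e^{2\psi}\sigma_{ij}\xi^i\xi^j \leq \tilde v^2 g_{ij}\xi^i\xi^j$, which follows from the identity for $g_{ij}$ in \eqref{NotMetric}, this yields $|||X|||^2 \leq c\tilde v^2 |\xi|_g^2$. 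The only place demanding care is the bookkeeping of the $\tilde v$-exponents in (iv)--(vi): the individual steps are routine once the Cauchy--Schwarz splitting and the substitutions $g^{ij} \mapsto \tilde v^2 \sigma^{ij}$ are organised, and no geometric input beyond the explicit formulae of Section~3 and compactness of $\bar\Om$ enters.
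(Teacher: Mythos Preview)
The paper gives no argument of its own here; it simply refers the reader to \cite{GerhScalar}. Your derivations of the first five inequalities and of \eqref{C1VectorEst} are along the standard lines and are correct: the first three and the last come from direct substitution of the graph formulae \eqref{InvMetric}--\eqref{normal} together with monotonicity of $F$, while (iv) is exactly the weighted Cauchy--Schwarz split you describe (with weight $\eps\tilde v$, so that the bound $|x_i^{\alpha}x_k^{\beta}\eta_{\alpha\beta}|\le c\tilde v^{2}$ produces the required $\tilde v^{3}$ on the second term).

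Your treatment of the Riemann-tensor estimate (vi), however, has a genuine gap. Carrying out literally the recipe you state---replace $F^{ij}$ by $F^{kl}g_{kl}\,g^{ij}$ and then $g^{ij}$ by $c\tilde v^{2}\sigma^{ij}$---contributes a factor $\tilde v^{2}$ from \emph{each} inverse metric (there are two, the explicit $g^{kl}$ and the one coming from the trace bound) and a further $\tilde v$ from $\nu$, which yields $c\tilde v^{5}F^{ij}g_{ij}$ rather than the claimed $c\tilde v^{3}F^{ij}g_{ij}$. What you are missing is the antisymmetry of $\bar R$. A clean way to recover the correct power is to rewrite the $g^{kl}$-contraction via the completeness relation $g^{kl}x_k^{\gamma}x_l^{\eps}=\bar g^{\gamma\eps}+\nu^{\gamma}\nu^{\eps}$, so that $g^{kl}x_k^{\gamma}\,\eta_{\eps}x_l^{\eps}=\eta^{\gamma}+\tilde v\,\nu^{\gamma}$; then decompose each $g$-orthonormal tangent vector as $x_i=-v e^{\psi}(u_ke_i^{k})\,\nu+Y_i$, where a short computation from \eqref{NotMetric} shows $|||Y_i|||\le 1$ and $|v e^{\psi}(u_ke_i^{k})|\le 1$. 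The antisymmetry of $\bar R$ in each index pair kills every term with two copies of $\nu$ in the same pair, and the surviving contractions involve only $\nu$, $\eta^{\sharp}$ and the bounded vectors $Y_i$, giving $|T_{ii}|\le c\tilde v^{3}$ in the $h$-eigenframe; the trace bound on $F^{ij}$ then finishes the estimate as you intended.
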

\begin{Lemma}
\label{C1EstDerivatives}
	Let $M \subset \bar{\Om}$ be a graph over $\mathcal{S}_0$, $M = $ graph $u$, and $\eps = \eps(x)$ a function defined in $\mathcal{S}_0$, $0< \eps < \frac{1}{2}$. Let $\phi$ be defined through
	\begin{equation}
		\phi = e^{\mu e^{\lambda u}},
	\end{equation}
	where $0< \mu$ and $\lambda < 0$. Then there exists $c = c(\Om, |\Phi|_0, |\Phi'|_0)$ such that
	\begin{equation}
	\begin{split}
		2\Phi'|F^{ij}\tilde{v}_i\phi_j| \leq &c F^{ij}g_{ij} \tilde{v}^3 |\lambda|\mu e^{\lambda u} \phi + (1 - 2\eps) \Phi' F^{ij} h_i^kh_{kj} \tilde{v}\phi \\
		& + \frac{1}{1 - 2\eps} \Phi' F^{ij}u_i u_j \mu^2\lambda^2 e^{2\lambda u} \tilde{v}\phi.
	\end{split} 
	\end{equation}
\end{Lemma}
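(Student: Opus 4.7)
The strategy is to expand $\phi_j$ and $\tilde v_i$ explicitly, split $\tilde v_i$ into a curvature-free and a curvature-carrying piece, and apply Cauchy--Schwarz, respectively a weighted Peter--Paul inequality, with weights chosen so that the three summands on the right-hand side appear exactly. I would begin with the chain rule, which gives $\phi_j = \mu\lambda e^{\lambda u}u_j\phi$, and with the identity $\tilde v = \eta_\alpha\nu^\alpha$ for $(\eta_\alpha) = e^\psi(-1,0,\ldots,0)$. Combining this identity with the Weingarten equation $\nu^\alpha_{;i} = h_i^k x_k^\alpha$ yields
\begin{equation}
\tilde v_i = \eta_{\alpha\beta}x_i^\beta \nu^\alpha + \eta_\alpha h_i^k x_k^\alpha =: A_i + B_i,
\end{equation}
with $B_i = h_i^k \tau_k$, $\tau_k := \eta_\alpha x_k^\alpha$. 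The orthogonal decomposition $\eta = -\tilde v\,\nu + \tau$ together with $\bar g(\eta,\eta) = -1$ gives the useful identity $|\tau|_g^2 = \tilde v^2 - 1 \le \tilde v^2$.

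The desired bound then follows from the triangle inequality $2\Phi'|F^{ij}\tilde v_i\phi_j|\le 2\Phi'|F^{ij}A_i\phi_j|+2\Phi'|F^{ij}B_i\phi_j|$ by handling each summand separately. For the $A$-piece I would apply Cauchy--Schwarz with respect to the positive-definite form $F^{ij}$, so that $|F^{ij}A_i u_j|\le (F^{ij}A_iA_j)^{1/2}(F^{kl}u_ku_l)^{1/2}$. The estimates of Lemma \ref{C1PrepLemma}, together with the $C^0$-bound $|Du|_g^2 = e^{-2\psi}(\tilde v^2 - 1)\le c\tilde v^2$ and the standard pointwise estimate $F^{ij}w_iw_j\le|w|_g^2 F^{kl}g_{kl}$ for any covector $w$, bound both factors by $c\tilde v^2 F^{kl}g_{kl}$. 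Using $\tilde v\ge 1$ and the hypothesis that $\Phi'$ is bounded produces the first summand $cF^{ij}g_{ij}\tilde v^3|\lambda|\mu e^{\lambda u}\phi$.

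The $B$-piece carries the curvature and requires a Peter--Paul step with a carefully tuned weight. The key pointwise inequality, verified by simultaneously diagonalizing $F^{ij}$ and $h_{ij}$ at a fixed point, is
\begin{equation}
F^{ij}B_iB_j = F^{ij}h_i^k h_j^l\tau_k\tau_l = \sum_i F^{ii}\kappa_i^2\tau_i^2 \le |\tau|_g^2\, F^{ij}h_i^k h_{kj} \le \tilde v^2 F^{ij}h_i^k h_{kj}.
\end{equation}
Choosing the weight $\alpha := (1-2\eps)\phi/\tilde v$ in the Peter--Paul inequality then gives
\begin{equation}
2|F^{ij}B_i\phi_j|\le \alpha F^{ij}B_iB_j + \alpha^{-1}F^{kl}\phi_k\phi_l \le (1-2\eps)\tilde v\phi\,F^{ij}h_i^k h_{kj} + \frac{\tilde v}{(1-2\eps)\phi}F^{kl}\phi_k\phi_l,
\end{equation}
and substituting $F^{kl}\phi_k\phi_l = \mu^2\lambda^2 e^{2\lambda u}\phi^2 F^{kl}u_ku_l$, followed by multiplication by $\Phi'$, produces exactly the second and third summands in the statement. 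Adding the estimates for the $A$- and $B$-pieces finishes the proof.

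The main obstacle is the precise balancing of powers of $\tilde v$ and $\phi$ in the Peter--Paul step: the weight $\alpha = (1-2\eps)\phi/\tilde v$ is forced by the requirement that the curvature and the gradient terms come out with prefactors $(1-2\eps)\Phi' F^{ij}h_i^k h_{kj}\tilde v\phi$ and $\tfrac{1}{1-2\eps}\Phi' F^{ij}u_iu_j\mu^2\lambda^2 e^{2\lambda u}\tilde v\phi$, respectively; all other steps are routine bookkeeping based on Lemma \ref{C1PrepLemma}.
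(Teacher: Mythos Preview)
Your proof is correct and follows the standard approach from the cited reference \cite{GerhScalar}; the paper itself does not supply a proof for this lemma. Your decomposition $\tilde v_i = A_i + B_i$ via $\tilde v = \eta_\alpha\nu^\alpha$ and the Weingarten equation, the use of $|\tau|_g^2 = \tilde v^2 - 1$, and the Peter--Paul step with weight $(1-2\eps)\phi/\tilde v$ are exactly the ingredients Gerhardt employs, so there is nothing to add.
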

Now we can prove Proposition \ref{C1estimates}:
\begin{proof}
	We consider the function 
	\begin{equation}
		w = \tilde{v} \phi,
	\end{equation}
	where $\phi$ is chosen as in \eqref{C1phi} and we will choose
	\begin{equation}
		\mu = \frac{1}{4},	
	\end{equation}
	$\lambda$ negative with $|\lambda|$ large enough. Showing that $w$ is bounded is tantamount to show that $\tilde{v}$ is bounded.
	
	Let us furthermore assume that $u \leq -1$, for otherwise we could replace $u$ in the definition of $\phi$ by $(u - c)$, where $c > 1 + |u|$. We derive in view of the evolution equations and Lemma \ref{FBoundsLemma}, \ref{C1PrepLemma} and \ref{C1EstDerivatives} the parabolic inequality
	\begin{equation}
	\label{C1EvW}
	\begin{split}
		\dot{w} - \Phi'F^{ij}w_{ij} \leq & -\eps \Phi' F^{ij}h_i^kh_{kj}\tilde{v}\phi + c[\eps^{-1} + |\lambda|\mu e^{\lambda u}] F^{ij}g_{ij} \tilde{v}^3 \phi \\
		&+ [\frac{1}{1-2\eps} - 1] \Phi'F^{ij}u_iu_j \mu ^2 \lambda ^2 e^{2\lambda u}\tilde{v} \phi \\
		&- \Phi' F^{ij}u_i u_j \mu \lambda^2 e^{\lambda u}\tilde{v}\phi\\
		&+ c [|\eta_{\alpha\beta} \nu^\alpha\nu^\beta| + e^{-\psi}\mu\lambda e^{\lambda u}\tilde{v}^2]\phi,
	\end{split}
	\end{equation}
	where the function $0 < \eps = \eps(x) < \frac{1}{2}$ is the one chosen in Lemma \ref{C1PrepLemma}.
	
	We use the maximum principle to show that $w$ is bounded, let $0 < T < T^*$ and $x_0 = x(t_0, \xi_0)$ be such that
	\begin{equation}
		\underset{[0,T]}{\sup} \underset{M(t)}{\sup} w = w(t_0, \xi_0).
	\end{equation}
	We choose a coordinate system $(\xi^i)$ such that in the critical point 
	\begin{equation}
		g_{ij} = \delta_{ij}\qquad \text{and} \qquad h_i^k = \kappa_i \delta_i^k,
	\end{equation}
	and there holds
	\begin{equation}
	\label{C1PCup}
		\kappa_1 \leq \kappa_2 \leq \ldots \leq \kappa_n.
	\end{equation}
	Now assume $\tilde{v}(x_0) \geq 2$ and let $i = i_0$ be an index such that
	\begin{equation}
	\label{C1GradientU}
		|u_{i_0}|^2 \geq \frac{1}{n} ||Du||^2.
	\end{equation}
	We set $(e^i) = \frac{\partial}{\partial \xi^{i_0}}$ and assume without loss of generality that $0 < u_ie^i$. At $x_0$ there holds $Dw(x_0) = 0$, hence taking the scalar product with $(e^i)$ yields
	\begin{equation}
	\begin{split}
		-\tilde{v}_ie^i &= \mu\lambda e^{\lambda u}\tilde{v}u_ie^i\\
			& = e^\psi h_i^k u_k e^i - \eta_{\alpha\beta}\nu^\alpha x_i^\beta e^i,
	\end{split}
	\end{equation}
	where the second equation follows from $\tilde{v} = \eta_\alpha \nu^\alpha$ and the Weingarten equation (we remind that $\eta = e^\psi(-1, 0, \ldots, 0)$).
	
	Rearranging the terms and taking \eqref{C1VectorEst} as well as \eqref{C1GradientU} into account, we get for large $|\lambda|$
	\begin{equation}
		\kappa_{i_0} \leq [\mu\lambda e^{\lambda u} + c]\tilde{v} e^{-\psi} \leq \frac{1}{2}\mu\lambda e^{\lambda u}\tilde{v}e^{-\psi}.	
	\end{equation}
	Hence it follows that $\kappa_{i_0}$ is negative and of the same order as $\tilde{v}$, which already finishes the proof, if we have a lower bound for the principal curvatures.
	
	Next, considering the special coordinate system chosen above and the fact that $\kappa_{i_0}$ is negative, we conclude
	\begin{equation}
		-F^{ij}h_i^kh_{kj} \leq -\sum_{i=1}^{i_0} F_i\kappa_i^2 \leq - \sum_{i=1}^{i_0}F_i \kappa_{i_0}^2.
	\end{equation}
	Since $F$ is concave, we have at $x_0$
	\begin{equation}
	\label{C1FDeriv}
		F_1 \geq F_2\geq \ldots \geq F_n,
	\end{equation}
	hence there holds
	\begin{equation}
		-\sum_{i=1}^{i_0} F_i \leq - F_1 \leq -\frac{1}{n} \sum_{i=1}^n F_i.
	\end{equation}
	We conclude
	\begin{equation}
		-F^{ij}h_i^kh_{kj} \leq - c F^{ij}g_{ij}\mu^2 \lambda^2e^{2\lambda u}\tilde{v}^2.
	\end{equation}
	Inserting this estimate in \eqref{C1EvW} yields at $x_0$ with the choice $\eps = e^{-\lambda u}$:
	\begin{equation}
	\begin{split}
	0 &\leq - c F^{ij}g_{ij}\mu^2 \lambda^2e^{\lambda u}\tilde{v}^3\phi + cF^{ij}g_{ij}\mu |\lambda|e^{\lambda u}\tilde{v}^3\phi\\
	& + \frac{2}{1-2\eps}\Phi' F^{ij}u_iu_j \mu^2\lambda^2 e^{\lambda u}\tilde{v}\phi - \Phi' F^{ij}u_iu_j\mu\lambda^2 e^{\lambda u}\tilde{v}\phi\\
	& + c \mu |\lambda|e^{\lambda u} \tilde{v}^2 \phi.
	\end{split}
	\end{equation}
	The second row is negative due to the choice of $\mu$. The first term is the dominant one, if we choose $|\lambda|$ large enough, hence the right hand side is negative, which implies that the maximum of $w$ cannot occur at a point where $\tilde{v} \geq 2$.
\end{proof}

\section{Curvature estimates}
In this section we prove the boundedness of the principal curvatures during the flow, which together with the estimates in the next section will imply the long time existence of the flow by well-known arguments.

Now in view of Lemma \ref{FBoundsLemma} we are in an expedient situation, as $C^2$-estimates can be derived in the same way as for a constant force term $f \equiv c$. Nevertheless we will provide them for the sake of completeness.

First, we provide the curvature estimates for $F=H$, cf. \cite[Lemma 4.4.1]{GerhCP}:
\begin{Proposition}
	The principal curvatures of the flow \eqref{floweq} with curvature function $F = H$ and supplementary function $\Phi(x) = x$ for $k=0$ or $\Phi \in C^{m,\alpha}(\bbr_+, \bbr)$ an arbitrary function satisfying $\Phi'>0$ and $\Phi'' \leq 0$ for $k = 1$, are uniformly bounded during the flow.
\end{Proposition}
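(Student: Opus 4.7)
The plan is to exploit Lemma \ref{FBoundsLemma}, which confines $\Phi(H)$ between $c_1$ and $c_2$; since $\Phi'>0$, this translates directly into a uniform two-sided bound on $H$ itself. Combined with Proposition \ref{C1estimates} (yielding $\tilde v\le c$) and the $C^0$-estimates (which keep the flow inside a compact region $\bar\Omega$), the global term $f_k$ enters the estimates in the same way as a bounded prescribed function would. Hence the proof should proceed along the same lines as \cite[Lemma 4.4.1]{GerhCP}, with the global term playing no special role beyond its boundedness.

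Since $H=\sum_i\kappa_i$ is uniformly bounded, it suffices to bound the largest principal curvature $\kappa_n$ from above: if $|H|\le C_0$ and $\kappa_n\le C$, then $\kappa_1\ge H-(n-1)\kappa_n\ge -C_0-(n-1)C$, giving lower bounds on every $\kappa_i$. I would therefore consider the auxiliary function
\begin{equation*}
w \;=\; \log\kappa_n \,+\, \mu\,\tilde v ,
\end{equation*}
with a large positive constant $\mu$ to be fixed, and apply the maximum principle on $[0,T]\times M$ for arbitrary $T<T^*$. At a point of non-smoothness caused by multiplicities of the principal curvatures, one follows the standard device of either perturbing $h^j_i$ or replacing $\log\kappa_n$ by $\tfrac12\log|A|^2$ and using a Lipschitz variant of the maximum principle.

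For $F=H$ the evolution equation \eqref{EvSecFF2} simplifies significantly: $F^{ij}=g^{ij}$ turns the elliptic operator into $\Phi'\Delta$, the concavity term $F^{kl,rs}h_{kl;i}h_{rs;}^{\;\;j}$ vanishes, and $\Phi''\le 0$ makes $\Phi''H_iH^j$ a good sign. Working in a frame that diagonalises $h^j_i$ at the maximum point, the leading contribution to the equation satisfied by $\log\kappa_n$ is the negative term $-\Phi'|A|^2\kappa_n$; the remaining terms contain $Hh_{ri}h^{rj}$, $(H-f)h^k_ih^j_k$, ambient curvature quantities $\bar R_{\alpha\beta\gamma\delta}$ and their covariant derivatives, all of which are either bounded on $\bar\Omega$ or absorbable via the timelike convergence condition \eqref{TCC}. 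The $\mu\tilde v$ piece contributes, through \eqref{EvV}, a further negative term $-\mu\Phi'|A|^2\tilde v$ plus lower-order quantities controlled by $\tilde v$ and the ambient geometry.

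At the maximum point one uses $|A|^2\ge \kappa_n^2/n$ to see that the leading term $-\Phi'(1+\mu\tilde v)\kappa_n^3/n$ eventually dominates every other term (which is of order at most $\kappa_n^2$), forcing $\kappa_n$ to be bounded by a constant depending only on $\Omega$, $|H|_0$, $|\tilde v|_0$ and $\|\bar R\|_{C^1(\bar\Omega)}$. The main technical obstacle is bookkeeping: one must carefully track the dependence on $\mu$ so that the negative term $-|A|^2\kappa_n$ really dominates the mixed curvature terms and the Weingarten-type terms of indefinite sign, and in particular so that the linear-in-$\kappa_n$ and quadratic-in-$\kappa_n$ contributions from $Hh_{ri}h^{rj}$ and $(H-f)h^k_ih^j_k$ (which could be of either sign since the flow hypersurfaces are not a priori convex) are subordinated. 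Once $\mu$ is fixed large enough, the maximum principle gives $\kappa_n\le C$ uniformly in $t\in[0,T^*)$; combined with the bound on $H$ this furnishes the claimed two-sided bound on all principal curvatures.
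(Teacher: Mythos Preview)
Your overall strategy---bound $\kappa_n$ from above via the maximum principle and then use the two-sided bound on $H$ to control all $\kappa_i$---is correct, but the auxiliary term $\mu\tilde v$ is unnecessary for $F=H$, and with $\mu$ chosen \emph{large} as you propose it would actually break the argument. The paper works with $\zeta=\log h^n_n$ alone (equivalently $h^n_n$ itself, since the maxima coincide). At the spatial maximum one has $\nabla h^n_n=0$, and the evolution equation \eqref{EvSecFF2} with $F^{ij}=g^{ij}$, $F^{kl,rs}=0$ gives directly
\[
0 \;\le\; -\Phi'\,|A|^2\, h^n_n \;+\; c\,H\,(h^n_n)^2 \;+\; c\,(1+h^n_n),
\]
from which $|A|^2\ge\kappa_n^2$ and $|H|\le C_0$ force $\kappa_n\le C$. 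No $\tilde v$ term is needed.

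If instead you add $\mu\tilde v$, then at the maximum of $w=\log h^n_n+\mu\tilde v$ you no longer have $\nabla h^n_n=0$; rather $\nabla\log h^n_n=-\mu\nabla\tilde v$, and the parabolic equation for $\log h^n_n$ acquires the bad gradient term $+\Phi'\,|\nabla\log h^n_n|^2=\mu^2\Phi'\,|\nabla\tilde v|^2$. Since $\tilde v_i=\eta_{\alpha\beta}\nu^\alpha x^\beta_i+\eta_\alpha h^k_i x^\alpha_k$ contains $h^k_i$ linearly, this term is of order $c\,\mu^2|A|^2$ and, for large $\mu$, overwhelms the good contribution $-\Phi'(1+\mu\tilde v)|A|^2$. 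In the $(K^*)$ proof later in the paper the analogous gradient term is absorbed using the class-$(K)$ inequality together with $\Phi''\le -\Phi'/x$ for $\Phi=\log$; for $F=H$ one has $F^{kl,rs}=0$ and, in the case $\Phi(x)=x$, also $\Phi''=0$, so neither mechanism is available. Your argument would go through with $\mu=0$ (which is exactly the paper's route) or $\mu$ sufficiently small---but then the $\tilde v$ term serves no purpose.

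There is also a harmless power-counting slip: for the equation satisfied by $\log h^n_n$ the leading negative term is $-\Phi'|A|^2$ (not $-\Phi'|A|^2\kappa_n$), and the competing terms are $O(\kappa_n)$ rather than $O(\kappa_n^2)$; the conclusion is unchanged.
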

\begin{proof}
	Let $\zeta$ be defined by
	\begin{equation}
		\zeta = \sup\,\{ h_{ij}\eta^i\eta^j: ||\eta|| = 1\}.
	\end{equation}
	Let $0 < T < T^*$ and $x_0 = x_0(t_0)$ with $0 < t_0 \leq T$ be a point in $M(t_0)$ such that
	\begin{equation}
	\label{C2SupZeta}
		\underset{M_0}{\sup } \,\zeta < \sup\, \{ \underset{M_t}{\sup } \,\zeta:0 <t \leq T\} = \zeta(x_0).
	\end{equation}
	At first, we follow the usual argument, which allows one to substitute $\zeta$ by $h_n^n$ and use the evolution equation for the latter quantity to estimate $\zeta$:
	
	We choose Riemannian normal coordinates $(\xi^i)$ at $x_0 \in M(t_0)$ such that at this point we have
	\begin{equation}
		g_{ij} = \delta_{ij} \qquad \text{and}\qquad \zeta = h^n_n = \kappa_n,
	\end{equation}
	where we assume the principal curvatures are labelled as in \eqref{C1PCup}.
	
	Let $\tilde{\eta} = (\tilde{\eta}^i)$ be the contravariant vector field defined by 
	\begin{equation}
	\tilde{\eta} = (0, \ldots, 0, 1),
	\end{equation}
	and set 
	\begin{equation}
		\tilde{\zeta} = \frac{h_{ij}\tilde{\eta}^i\tilde{\eta}^j}{g_{ij}\tilde{\eta}^i\tilde{\eta}^j}.
	\end{equation}
	We note that $\tilde{\zeta}$ is well defined in a neigbourhood of $(t_0, x_0)$ and $\tilde{\zeta}$ assumes its maximum at $(t_0, x_0)$ as well. Moreover at $(t_0, x_0)$ we have 
	\begin{equation}
		\dot{\tilde{\zeta}} = \dot{h}^n_n
	\end{equation}
	and the spatial derivatives do also coincide. Hence at $(t_0, x_0)$ the function $\tilde{\zeta}$ satisfies the same differential equation as $h^n_n$. For the sake of greater clarity, we will treat therefore $h_n^n$ like a scalar and pretend that $\zeta$ is defined by
	\begin{equation}
		\zeta = \log h^n_n.
	\end{equation}
	In view of the maximum principle and Lemma \ref{FBoundsLemma} we deduce that there holds at $(t_0, x_0)$
	\begin{equation}
		0 \leq - \Phi' ||A||^2h^n_n + c H|h^n_n|^2 + c(1+ h^n_n),
	\end{equation}
	where $||A||^2 = h_{ij}h^{ij} = \sum_{i=1}^n \kappa_i^2$. This proves that $\zeta$ is bounded and since we already have a lower bound on $H$ we are done.
\end{proof}
Before we prove the next estimates, let us state the following 
\begin{Bemerkung}
	Let $\chi \equiv \chi_{\Om}$ be the strictly convex function, where we assume $\bar{\Om}$ is the region determined by the $C^0$-estimates. Then there exist constants $c= c(|\Phi|, |\Phi'|)$ and $c_0 > 0$ (depending on $|\Phi'|$ and the strict convexity of $\chi$), such that
	\begin{equation}
		\dot{\chi} - \Phi'F^{ij}\chi_{ij} \leq c\chi_\alpha\nu^\alpha - c_0F^{ij}g_{ij}.
	\end{equation}
\end{Bemerkung}
Next, we treat the case $F= \si_2$. The proof is as in \cite{EnzDiplom} :
\begin{Proposition}
	The principal curvatures of the flow \eqref{floweq} with $F = \si_2$, $\Phi(x) = x$ or $\Phi(x) = -x^{-1}$, are uniformly bounded during the flow, provided there exists a strictly convex function $\chi \in C^2(\bar{\Om})$.
\end{Proposition}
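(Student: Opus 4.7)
The plan is to bound the largest principal curvature $\kappa_n$ from above; once this is done, the bound on $F = \sigma_2$ established in Lemma \ref{FBoundsLemma} together with the $\Gamma_2$-admissibility (which forces $H>0$ and $H_2>0$) gives $|A|^2 = H^2 - 2H_2 \leq n^2 \kappa_n^2$, yielding uniform control on all $|\kappa_i|$. As in the mean curvature case, set
\begin{equation*}
\zeta(t,\xi) = \sup\bigl\{h_{ij}\eta^i\eta^j : \|\eta\|=1\bigr\}
\end{equation*}
and, at a point $(t_0,x_0)$ where a new supremum of $\zeta$ is attained, pass to Riemannian normal coordinates in which $g_{ij}=\delta_{ij}$ and $h_i^j = \kappa_i\delta_i^j$ with $\kappa_1 \leq \ldots \leq \kappa_n$. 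Then $\zeta$ can be replaced by the scalar $h^n_n$ at this point.

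The auxiliary function is
\begin{equation*}
w = \log h^n_n + \lambda \tilde v + \mu \chi,
\end{equation*}
with constants $\lambda,\mu>0$ to be chosen. I compute $\dot w - \Phi'F^{ij}w_{;ij}$ using the evolution equations \eqref{EvSecFF2} for $h^n_n$, \eqref{EvV} for $\tilde v$, and the remark preceding the proposition for $\chi$. The extra term generated by $\log$ is $\Phi' F^{ij}h^n_{n;i}h^n_{n;j}/(h^n_n)^2$; at the maximum, $Dw = 0$ gives $h^n_{n;i}/h^n_n = -\lambda\tilde v_{;i} - \mu \chi_{;i}$, so this term is controlled by a linear combination of $\Phi' F^{ij}\tilde v_{;i}\tilde v_{;j}$ and $\Phi' F^{ij}\chi_{;i}\chi_{;j}$, the latter being bounded by $cF^{ij}g_{ij}$.

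The key algebraic facts are: the term $-\Phi'F^{kl}h_{rk}h^r_l h^n_n$ from \eqref{EvSecFF2} divided by $h^n_n$ gives $-\Phi'F^{ij}h_{ik}h^k_j$, a crucial \emph{good} term (negative); concavity of $\sigma_2$ on $\Gamma_2$ gives $\Phi'F^{kl,rs}h_{kl;n}h_{rs;n}^{\ \ \ }/h^n_n \leq 0$; the term $\Phi' F \, h_{ri}h^{ri}/h^n_n$ evaluates to $\Phi' F \kappa_n$, which for the two choices of $\Phi$ is either $\kappa_n$ (if $\Phi(x)=x$) or bounded by a constant times $\kappa_n$ (if $\Phi(x)=-x^{-1}$); the term $-(\Phi-f)\kappa_n$ is bounded by $c\kappa_n$ using Lemma \ref{FBoundsLemma}; and ambient curvature terms are bounded by $cF^{ij}g_{ij}(1+\kappa_n)$. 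The term $-\lambda\Phi' F^{ij}h_{ik}h^k_j\tilde v$ from \eqref{EvV} gives a second copy of the good quadratic-in-curvature term, while $\mu$ times the estimate for $\chi$ contributes $-\mu c_0 F^{ij}g_{ij}$.

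At the critical point I therefore obtain an inequality of the form
\begin{equation*}
0 \leq -(1+\lambda \tilde v)\,\Phi' F^{ij}h_{ik}h^k_j + c\,\kappa_n + \bigl(c(1+\lambda)(1+\kappa_n) - \mu c_0\bigr)F^{ij}g_{ij}.
\end{equation*}
Choosing first $\mu$ large relative to $c(1+\lambda)$ kills the $F^{ij}g_{ij}$ contribution; then I use the identity $F^{ij}h_{ik}h^k_j = \tfrac{1}{2\sigma_2}\bigl(H|A|^2 - \sum\kappa_i^3\bigr)$ together with $\kappa_n \geq H/n$ and $|A|^2 \geq \kappa_n^2$ to get $\Phi'F^{ij}h_{ik}h^k_j \geq c \kappa_n^2$ once $\kappa_n$ is sufficiently large (exploiting that the smallest curvatures are controlled by the $\Gamma_2$-constraint and the $\sigma_2$-bound). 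This quadratic term dominates the linear term in $\kappa_n$, forcing a contradiction for $\kappa_n$ larger than some explicit constant, which completes the bound.

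The main obstacle is the bad term $\Phi'F\kappa_n$ coming from $\Phi'F h_{ri}h^{ri}$ in \eqref{EvSecFF2}, compounded by the gradient term from $\log h^n_n$. The resolution is the combined use of the $\tilde v$-term in $w$ (which doubles the good $\Phi'F^{ij}h_{ik}h^k_j$ quadratic-in-curvature contribution) and the strictly convex function $\chi$ (which absorbs all $F^{ij}g_{ij}$ terms): this is precisely why Assumption \ref{MainAssumption} requires a strictly convex function in the non-mean-curvature case, and why the proof for $F=H$ could do without it.
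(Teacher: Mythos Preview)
Your proof has a genuine gap at the decisive step. You claim that for $F=\sigma_2$,
\[
\Phi' F^{ij}h_{ik}h^k_j \;\geq\; c\,\kappa_n^2
\]
once $\kappa_n$ is large, but this is false. Take $\kappa_1=\cdots=\kappa_{n-1}=\eps$ and $\kappa_n$ chosen so that $\sigma_2$ stays fixed; then $\eps\sim\kappa_n^{-1}$, $F_n=\tfrac{H-\kappa_n}{2\sigma_2}\sim\kappa_n^{-1}$, and one computes $F^{ij}h_{ik}h^k_j\sim\kappa_n$, only \emph{linear} in $\kappa_n$. Since your ``bad'' term $c\kappa_n$ is of the same order, the inequality does not close. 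Your justification via $H|A|^2-\sum\kappa_i^3\geq (H-\kappa_n)\kappa_n^2$ does not help, because $H-\kappa_n$ can be $O(\kappa_n^{-1})$.

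Relatedly, your test function $w=\log h^n_n+\lambda\tilde v+\mu\chi$ is the one the paper uses for $F\in(K^*)$, and that proof works only because the class~$(K)$ inequality for $F^{ij,kl}$ converts the second-derivative term into exactly $-\,(h^n_n)^{-2}\Phi' F^{ij}h_{in;n}h_{jn;}^{\ \ n}$, which then cancels the gradient term coming from $\log h^n_n$. You discard $F^{kl,rs}h_{kl;n}h_{rs;n}$ entirely by concavity, losing this cancellation; and $\sigma_2$ on $\Gamma_2$ is not of class~$(K)$, so the $(K^*)$ mechanism is unavailable. The $\lambda\tilde v$ term then hurts more than it helps: $\tilde v_i$ contains $h^k_i$-terms, so $\lambda^2\Phi' F^{ij}\tilde v_i\tilde v_j$ produces a contribution of order $\lambda^2 F^{ij}h_{ik}h^k_j$ that swamps the good $-\lambda\tilde v\,\Phi' F^{ij}h_{ik}h^k_j$ for $\lambda$ large.

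The paper's argument is structurally different: it uses $w=\log\zeta+\lambda\chi$ (no $\tilde v$) and a two-case split. If $|\kappa_1|\geq\eps_1\kappa_n$, then $F^{ij}h_{ik}h^k_j\geq F_1\kappa_1^2\geq\tfrac{1}{n}\eps_1^2\kappa_n^2\,F^{ij}g_{ij}$ is genuinely quadratic and dominates. If $\kappa_1\geq-\eps_1\kappa_n$, the paper does \emph{not} throw away the $F^{kl,rs}$ term: it decomposes it via \cite[(2.1.72)]{GerhCP}, uses Codazzi to swap indices, and reduces the gradient contribution to $2\lambda^2 F_n\|D\chi\|^2$. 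The estimate then closes because for $\sigma_2$ one has the crucial growth $F^{ij}g_{ij}\geq c\,\kappa_n$ (since $(H_2)_i\geq c_1\kappa_n$ and $\sigma_2$ is bounded), so the $-\lambda c_0 F^{ij}g_{ij}$ term from $\chi$ absorbs the remaining linear-in-$\kappa_n$ pieces. This $\sigma_2$-specific growth of $F^{ij}g_{ij}$ is the ingredient your argument is missing.
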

\begin{proof}
	Let $\zeta$ and $w$ be respectively defined by
	\begin{equation}
	\begin{split}
		&\zeta = \sup \{h_{ij}\eta^i\eta^j: ||\eta|| = 1\},\\
		\label{C2EvW}
		& w = \log \zeta + \lambda \chi,
	\end{split}
	\end{equation}
	where $\lambda > 0$ is a large constant. We will show that $w$ is bounded, if we choose $\lambda$ sufficiently large.
	
	Let $0 < T < T^*$ and $x_0 = x_0(t_0)$ with $0 < t_0 \leq T$ be a point in $M(t_0)$ such that
	\begin{equation}
	\label{C2SupW}
		\underset{M_0}{\sup }\, w < \sup \, \{ \underset{M_t}{\sup }\, w:0 <t \leq T\} = w(x_0).
	\end{equation}
	By the same procedure as in the last proof we introduce normal coordinates at $x_0 = x(t_0, \xi_0)$, and we may define $w$ by
	\begin{equation}
		w = \log h_n^n + \lambda \chi.
	\end{equation}
	If we assume $h^n_n$ and $\lambda$ to be greater than $1$, we deduce the following inequality at $(t_0, \xi_0)$
	\begin{equation}
	\label{C2EvWEst}
	\begin{split}
		0 &\leq -c F^{ij}h_{ik}h^k_j + c h^n_n + cF^{ij}g_{ij} + \lambda c -\lambda c_0 F^{ij}g_{ij} \\
		&+ \Phi' F^{ij}(\log h_n^n)_i (\log h_n^n)_j + \Phi' \frac{2}{\kappa_n - \kappa_1}\sum_{i=1}^n(F_n - F_i)(h_{ni;}^{\quad n})^2(h_n^n)^{-1},
	\end{split}
	\end{equation}
	where we have estimated bounded terms by a constant $c$ and the last term is due to the term with the second derivatives of $F$ in the evolution equation of $h_n^n$ (we use formula (2.1.72) in \cite{GerhCP} and note that the two parts in this formula are both negative for a concave curvature function, see \cite[Proposition 2.1.23]{GerhCP}).
	
	We distinguish two cases:
	
		\begin{em}Case 1.\end{em}\quad Suppose that
		\begin{equation}
			|\kappa_1| \geq \eps_1 \kappa_n,
		\end{equation}
		where we choose some fixed $\eps_1$ so that $0 < \eps_1 < \frac{1}{2}$.
		
		Then we have in view of the concavity of $F$, see \eqref{C1FDeriv},
		\begin{equation}
			F^{ij}h_{ik}h^k_j \geq F_1 \kappa_1^2 \geq \frac{1}{n}F^{ij}g_{ij}\eps_1^2 \kappa_n^2.
		\end{equation}
		Since $Dw=0$, 
		\begin{equation}
			D\log h^n_n = - \lambda D\chi.
		\end{equation}
		Hence 
		\begin{equation}
			F^{ij}(\log h_n^n)_i (\log h_n^n)_j \leq \lambda^2 F^{ij}\chi_i \chi_j \leq c \lambda^2 F^{ij}g_{ij}.
		\end{equation}
		For large $\kappa_n$ the first term in \eqref{C2EvWEst} is dominating, so we can conclude $\kappa_n$ is a priori bounded in this case.
		
		\begin{em} Case 2.\end{em}\quad Suppose that
		\begin{equation}
			\kappa_1 \geq - \eps_1 \kappa_n.
		\end{equation}
		Then, by using the Codazzi equations,  we can estimate the last term in \eqref{C2EvWEst} from above (where we omit the factor $\Phi'$ for a moment):
		\begin{equation}
		\begin{split}
			\frac{2}{1+\eps_1}\sum_{i=1}^n (F_n - F_i) (h_{ni;}^{\quad n})^2 (h^n_n)^{-2} \leq &\frac{2}{1+2\eps_1}\sum_{i=1}^n(F_n - F_i) (h_{nn;}^{\quad\, i})^2 (h^n_n)^{-2}\\
			&+ c(\eps_1)\sum_{i=1}^n(F_i - F_n) (h^n_n)^{-2}.
		\end{split}
		\end{equation}
		The second sum can be estimated by a constant, since $F_1 \leq c\kappa_n$.
		
		The terms in \eqref{C2EvWEst} containing the derivative of $h_n^n$ can therefore be estimated from above by (again omitting the common factor $\Phi'$)
		\begin{equation}
		\begin{split}
			&-\frac{1-2\eps_1}{1+2\eps_1}\sum_{i=1}^n F_i(h_{nn;}^{\quad\, i})^2 (h^n_n)^{-2} + \frac{2}{1+2\eps_1}F_n\sum_{i=1}^n(h_{nn;}^{\quad \, i})^2(h^n_n)^{-2}\\
			& \leq 2 F_n \sum_{i=1}^n (h_{nn;}^{\quad\, i})^2 (h^n_n)^{-2} = 2\lambda^2F_n||D\chi||^2.
		\end{split}
		\end{equation}
		Hence we get the inequality
		\begin{equation}
		\begin{split}
			0 \leq &- c F_n\kappa_n^2 + \lambda^2 c F_n + c\kappa_n + cF^{ij}g_{ij}\\
			 & + \lambda c - \lambda c_0 F^{ij}g_{ij}.
		\end{split}
		\end{equation}
		Because of $(H_2)_i \geq c_1 \kappa_n$ and the fact that $\si_2\leq c_0$ we deduce
		\begin{equation}
			F^{ij}g_{ij} \geq c \kappa_n.
		\end{equation}
		Hence we can uniformly estimate $\kappa_n$ from above, if $\lambda$ has been chosen large enough. The proposition now follows from $|A|^2 < H^2$ and $H>0$, which are valid in $\Gamma_2$ by definition.
\end{proof}
Finally we provide the $C^2$-estimates in the case of a curvature function of class $(K^*)$, see also \cite[Lemma 4.1.3]{GerhCP}.
\begin{Proposition}
		The principal curvatures of the flow \eqref{floweq} with $F \in (K^*)$, $\Phi(x) = \log(x)$, are uniformly bounded during the flow, provided there exists a strictly convex function $\chi \in C^2(\bar{\Om})$.
\end{Proposition}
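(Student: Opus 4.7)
The plan is to mimic the auxiliary-function argument used in the proof of the preceding proposition. I consider
\begin{equation*}
\zeta = \sup\{h_{ij}\eta^i\eta^j : \|\eta\| = 1\}, \qquad w = \log\zeta + \lambda\chi,
\end{equation*}
for a large constant $\lambda > 0$ to be chosen, and I try to show that $w$ is uniformly bounded from above on $[0,T^*)\times M$. Once this is established, $\kappa_n$ is bounded, and since $\Phi = \log F$ is bounded by Lemma~\ref{FBoundsLemma} so that $F$ stays uniformly away from $\partial\Gamma_+$, the principal curvatures $\kappa_i$ are two-sidedly bounded.

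At a spatial maximum $(t_0, \xi_0)$ of $w$ on $[0,T]\times M$, I choose normal coordinates diagonalizing $h^j_i$ with $\kappa_1 \leq \ldots \leq \kappa_n$, reducing $w$ to $\log h^n_n + \lambda\chi$ near this point. Plugging the evolution equation~\eqref{EvSecFF2} for $h^n_n$, divided by $h^n_n$, into the maximum principle, adding $\lambda$ times the estimate for $\dot\chi - \Phi'F^{ij}\chi_{ij}$ from the Remark preceding the proposition, and exploiting $Dw=0$ to rewrite $\Phi'F^{ij}(\log h^n_n)_i(\log h^n_n)_j = \lambda^2\Phi'F^{ij}\chi_i\chi_j$, I obtain an inequality of the form
\begin{equation*}
\begin{split}
0 \leq\, & -\Phi'F^{kl}h_{rk}h^r_l + \Phi' F\,\kappa_n - (\Phi - f)\kappa_n + \Phi'F^{kl,rs}h_{kl;n}h_{rs;}^{\quad n}/h^n_n \\
& + c\lambda^2 F^{ij}g_{ij} - \lambda c_0 F^{ij}g_{ij} + c(1 + \lambda + \kappa_n).
\end{split}
\end{equation*}
The choice $\Phi(x) = \log x$ is crucial: $\Phi' F = 1$ so that $\Phi'F\,h_{rn}h^{rn}/h^n_n = \kappa_n$ is only linear in $\kappa_n$, $\Phi'' = -F^{-2}\leq 0$ renders the omitted $\Phi''F_nF^n$ term non-positive, and $\Phi$ and $f$ are uniformly bounded by Lemma~\ref{FBoundsLemma}. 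The class-$(K^*)$ inequality then yields the decisive estimate $\Phi'F^{kl}h_{rk}h^r_l \geq \epsilon_0 H \geq \epsilon_0 \kappa_n$.

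Following the $\sigma_2$-proof I distinguish two cases according to whether $|\kappa_1|\geq \epsilon_1\kappa_n$ or $\kappa_1 \geq -\epsilon_1\kappa_n$ for a small fixed $\epsilon_1\in(0,\tfrac12)$. In the first case concavity of $F$ (so that $F_1 \geq\ldots\geq F_n$) upgrades the $(K^*)$-bound to $F^{kl}h_{rk}h^r_l \geq \tfrac{\epsilon_1^2}{n}F^{ij}g_{ij}\kappa_n^2$, whence $-c\Phi' F^{ij}g_{ij}\kappa_n^2$ dominates $c\lambda^2 F^{ij}g_{ij}$ as soon as $\kappa_n > C\lambda$. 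In the second case the Codazzi equations reduce the concavity-correction term $\Phi'F^{kl,rs}h_{kl;n}h_{rs;}^{\quad n}/h^n_n$ to an expression controlled by the $(K)$-inequality of Definition~\ref{curvClass}(i), leaving only a positive piece of the form $c\lambda^2 F_n\|D\chi\|^2 \leq c\lambda^2 F^{ij}g_{ij}$; choosing $\lambda$ large enough (depending on $\epsilon_0$, $c_0$, and the previously obtained $C^0$- and $C^1$-bounds), the strict-convexity contribution $-\lambda c_0 F^{ij}g_{ij}$ absorbs everything positive and a contradiction is obtained unless $\kappa_n$ is a priori bounded.

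The main obstacle is the control in Case~2 of the cross term $\Phi' F^{-1}(F^{kl}h_{kl;n})^2 = \Phi' F^{-1}F_{;n}^2$ produced by the $(K)$-inequality, which has no analogue in the quadratic-polynomial case $F=\sigma_2$. Homogeneity of $F$ gives $F_{;n}/F = \Phi_{;n} = -\lambda\chi_n$, hence this term is of order $\lambda^2 F\|D\chi\|^2 \leq c\lambda^2 H$ by Lemma~\ref{FHineq}; invoking the $(K^*)$ lower bound once more, it too can be absorbed into the dominant negative term for $\lambda$ sufficiently large, completing the argument.
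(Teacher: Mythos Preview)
Your approach has a genuine gap: the test function $w=\log\zeta+\lambda\chi$ is not strong enough for $F\in(K^*)$. In the inequality you display, the positive terms linear in $\kappa_n$ (namely $\Phi'F\,\kappa_n=\kappa_n$, $|(\Phi-f)|\kappa_n$, and the ambient curvature contribution $c\kappa_n$) carry \emph{fixed} coefficients of order one. Against them you only have $-\Phi'F^{kl}h_{rk}h^r_l\le-\epsilon_0 H\le-\epsilon_0\kappa_n$ from the $(K^*)$ inequality, with the \emph{fixed small} constant $\epsilon_0=\epsilon_0(F)$, together with $-\lambda c_0F^{ij}g_{ij}$. But for general $F\in(K^*)$ the trace $F^{ij}g_{ij}$ need not grow like $\kappa_n$ when $F$ is kept bounded (for $F=\sigma_n$, if $\kappa_1=\ldots=\kappa_{n-1}\sim\kappa_n^{-1/(n-1)}$ and $\kappa_n\to\infty$, then $F^{ij}g_{ij}\sim\kappa_n^{1/(n-1)}$), so neither term can absorb $c\kappa_n$ when $\epsilon_0$ is small. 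The paper repairs this by adding a \emph{third} summand and working with
\[
w=\log\zeta+\lambda\tilde v+\mu\chi.
\]
The evolution equation \eqref{EvV} then supplies the extra negative term $-\lambda\Phi'F^{ij}h_{ik}h^k_j\tilde v$, which via $(K^*)$ and $\tilde v\ge1$ gives $-\lambda\epsilon_0 H$ with an \emph{adjustable} coefficient $\lambda$; choosing first $\lambda$ large to kill $c\kappa_n$, and then $\mu$ large so that $-\mu c_0F^{ij}g_{ij}$ absorbs the $c\lambda F^{ij}g_{ij}$ produced by the $\tilde v$-equation, closes the argument. No case distinction is needed (and your Case~2 condition $\kappa_1\ge-\epsilon_1\kappa_n$ is automatic in $\Gamma_+$).

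There is a second, independent error in your final paragraph. The identity you invoke, $\Phi_{;n}=F_{;n}/F=-\lambda\chi_n$, is false: the critical-point relation $Dw=0$ says $(\log h^n_n)_{;i}=-\lambda\chi_i$, which has nothing to do with $(\log F)_{;i}$. The paper avoids this issue entirely by \emph{not} discarding $\Phi''F_nF^n$: for $\Phi=\log$ one has $\Phi''=-\Phi'/F$, so $\Phi''F_{;n}^2$ cancels the $\Phi'F^{-1}F_{;n}^2$ coming from the $(K)$-inequality exactly, leaving only the good term $-\Phi'F^{ik}\tilde h^{jl}h_{ij;n}h_{kl;n}$. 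In $\Gamma_+$ every summand of the latter is nonnegative; keeping only $j=n$ and using Codazzi converts it into $-(h^n_n)^{-2}\Phi'F^{ij}((h^n_n)_{;i}+\bar R_i)((h^n_n)_{;j}+\bar R_j)$, which cancels the gradient-squared term $\Phi'F^{ij}(\log h^n_n)_i(\log h^n_n)_j$ up to a harmless cross term handled via $Dw=0$.
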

\begin{proof}
	Let $\zeta$ be defined as in the preceding proof and define
	\begin{equation}
		w = \log \zeta +\lambda\tilde{v} + \mu \chi,
	\end{equation}
	where $\lambda$, $\mu$ are large positive parameters which we specify later and we will prove that $w$ is bounded if we choose $\lambda$ and $\mu$ approprietly.
	
	By the same procedure as in the last proofs we suppose $x_0$ is a point in $M(t_0)$ such that \eqref{C2SupW} holds, where $0 < T < T^*$ and $0 < t_0 \leq T$, we introduce normal coordinates at $x_0 = x(t_0, \xi_0)$, and we may define $w$ by
	\begin{equation}
		w = \log h_n^n + \lambda \tilde{v} + \mu \chi.
	\end{equation}
	At $(t_0, \xi_0)$ we have in view of the maximum principle
	\begin{equation}
	\label{C2WEst2}
	\begin{split}
		0 \leq & \,c(h_n^n + \lambda) + c\lambda F^{ij}g_{ij} + \mu c - \mu c_0 F^{ij}g_{ij}\\
		& - \lambda \Phi' F^{ij}h_i^kh_{kj} + \Phi' F^{ij}(\log h_n^n)_i (\log h_n^n)_j\\
		& + \{\Phi'' F_nF^n + \Phi' F^{kl,rs}h_{kl;n}h_{rs;}^{\quad n}\}(h_n^n)^{-1},
	\end{split}
	\end{equation}
	where we have assumed that $h_n^n$, $\lambda$ and $\mu$ are larger than $1$ and used the boundedness of $\Phi$.
	
		Since $F \in (K^*)$, by choosing $\lambda$ and $\mu$ large enough it suffices to estimate the term, which is quadratic in the derivatives. This will be done by exploiting the last term, which is negative (both its components are).
		
		Since $F \in (K)$ and $\Phi''(x) \leq -\frac{\Phi'(x)}{x}$, we can estimate the last term from above by
		\begin{equation}
		\label{C2KEst}
			- (h_n^n)^{-2} \Phi' F^{ij}h_{in;n}h_{jn;}^{\quad n}.
		\end{equation}
		The Codazzi equation implies
		\begin{equation}
			h_{in;n} = h_{nn;i} + \bar{R}_{\alpha\beta\gamma\delta}\nu^\alpha x_n^\beta x_i^\gamma x_n^\delta,
		\end{equation}
		hence by abbreviating the curvature term by $\bar{R}_i$, we conclude that \eqref{C2KEst} is equal to
		\begin{equation}
			- (h_n^n)^{-2}\Phi' F^{ij}((h_n^n)_{;i} + \bar{R}_i)((h_n^n)_{;j} + \bar{R}_j).
		\end{equation}
		Hence the last two terms in \eqref{C2WEst2} are estimated from above by
		\begin{equation}
			-2(h_n^n)^{-1}\Phi'F^{ij}(\log h_n^n)_{;i} \bar{R}_j.
		\end{equation}
		Now $Dw = 0$ yields
		\begin{equation}
			D\log h_n^n = - \lambda D\tilde{v} - \mu D\chi,
		\end{equation}
		hence we can finally estimate the last two terms by
		\begin{equation}
			\lambda c + (h_n^n)^{-1} (\lambda+\mu) c F^{ij}g_{ij}.
		\end{equation}
		This establishes the uniform bound of $\kappa_n$ from above and implies that $\kappa_1$ is uniformly bounded from below by a positive constant in view of $F \geq c > 0$ and $F_{|\partial \Gamma} = 0$.
\end{proof}

\section{Higher order estimates}
\label{higherorder}
In view of the a priori estimates obtained so far, we know that 
\begin{equation}
\label{graphC2estimates}
	|u|_{2,0, \mathcal{S}_0} \leq c_0
\end{equation}
and
\begin{equation}
	\Phi(F) \text{ is uniformly elliptic in } u
\end{equation}
independently of $t$, $0 < t < T^*$, because the principal curvatures lie in a compact subset of $\Gamma$. Denote the ellipticity constants by $\lambda, \Lambda$.

Next, we look at the nonlinear, but uniformly parabolic equation
\begin{equation}
	\frac{\partial u}{\partial t} = - e^{-\psi}v \, (\Phi(F) - f),
\end{equation}
where the operator $\Phi(F)$ is concave in $h_{ij}$, hence $-\Phi(F)$ is concave in $u_{ij}$. However, we cannot apply the Krylov-Safonov estimates since $f = f(t)$ is a merely bounded function. Instead, we can follow an argumentation similar to the one used in \cite{McCoyMixedAreaGen} and, with certain modifications, in \cite{RivSin} to obtain a uniform, time-independent bound on $u$ in $H^{2+\beta, \frac{2+\beta}{2}}([\delta, T]\times\calS_0)$ for some $0 < \beta < 1$, where we choose $\delta > 0$ to be arbitrary but fixed and $\delta < T < T^*\leq \infty$ arbitrary.

The idea is roughly as follows: 
First one obtains $H^{\beta, \frac{\beta}{2}}([\delta, T]\times \mathcal{S}_0)$-estimates, $0< \beta <\alpha$, for $u$ and $\Phi(F)$ using the parabolic Harnack-inequality. Then we fix $t \geq \delta$ and consider $u$ to be a solution of the nonlinear, but uniformly elliptic equation
\begin{equation}
	\Phi(F(\cdot, u(t, \cdot), Du(t, \cdot), D^2u(t, \cdot))) \equiv g \in C^{0, \beta}(\mathcal{S}_0).
\end{equation}
As this equation is not well-defined on the whole set $\mathcal{S}_0 \times \bbr \times \bbr^n \times \bfS$, where $\bfS \subset \bbr^{n\times n}$ denotes the space of symmetric matrices, we are going to use the Bellman-extension in a similar way as in the papers cited above. Now we can apply $C^{0,\beta}(\mathcal{S}_0)$-estimates for $D^2u(t, \cdot)$ using a result of Caffarelli (\cite[Theorem 8.1]{CafCab}). We remark that in this section $Du$ and $D^2u$ denote the first respectively second derivatives with respect to space. This then enables us to use the estimates from \cite[Sections 3.3, 3.4]{AndFN} to obtain parabolic H\"older estimates for $Du$ and $D^2u$. Finally, the higher order estimates can then be derived by using essentially the parabolic Schauder-theory, thereby asserting long-time existence.

From now on let $\eps, \delta, T$ be fixed constants with $0 < \eps < \delta < T < T^*$. By choosing a finite covering of $\mathcal{S}_0$ it suffices to show inner estimates in a fixed coordinate chart. Hence from now on all quantities of the hypersurface are expressed in local coordinates and depend on $x \in \Omega$, where $\Omega \subset\subset \bbr^n$ is an open, precompact set. 
First, we derive the H\"older-estimates for $u$ and $\Phi(F)$:
\begin{Lemma}
\label{HEFirstStep}
	There exist constants $\beta$, $0< \beta < 1$, and $c_1$ depending on the already obtained estimates, such that
		 \begin{align}
		 \label{PhiHoelder}
	 	 &||\Phi||_{\beta, \frac{\beta}{2}, \, [\eps, T]\times \calS_0} \leq c_1,\\
	 	 \label{HEbeta}
	 	 &||u||_{\beta, \frac{\beta}{2}, \, [\eps, T]\times \calS_0} \leq c_1.
	 \end{align}
\end{Lemma}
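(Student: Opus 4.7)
The proof splits into the relatively cheap estimate for $u$ and a Krylov--Safonov argument for $\Phi$.

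For the bound \eqref{HEbeta}, I would appeal directly to the $C^1$ and $C^2$ estimates already in hand. By the scalar flow equation \eqref{partialU},
\[
\frac{\partial u}{\partial t} = -e^{-\psi}\, v\, (\Phi - f_k),
\]
where $\Phi$ is uniformly bounded by Lemma \ref{FBoundsLemma} (hence so is the global term $f_k$), $v = \tilde{v}^{-1}$ is bounded away from zero by Proposition \ref{C1estimates}, and $\psi$ is bounded since the flow remains in the compact region $\bar{\Om}$ determined by the barriers. Therefore $\partial_t u$ is uniformly bounded on $[0,T^*) \times \calS_0$. Combined with the uniform $C^1$ bound on $Du$ from Proposition \ref{C1estimates}, this shows $u$ is uniformly Lipschitz in $(t,x)$ with respect to the parabolic distance $|x-y| + |t-s|^{1/2}$ on any bounded time interval, so \eqref{HEbeta} follows with any $\beta \in (0,1)$.

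The more substantial estimate \eqref{PhiHoelder} I would obtain from the evolution equation \eqref{EvPhi},
\[
\dot{\Phi} - \Phi' F^{ij}\Phi_{;ij} = -\Phi'(\Phi - f)\,\bigl\{F^{ij}h_i^k h_{kj} + F^{ij}\bar{R}_{\alpha\beta\gamma\delta}\nu^\alpha x_i^\beta \nu^\gamma x_j^\delta\bigr\},
\]
read as a \emph{linear}, uniformly parabolic equation for the unknown $\Phi$. The curvature estimates from the previous section confine the principal curvatures to a compact subset of $\Gamma$, so $\Phi' F^{ij}$ is uniformly elliptic with eigenvalues in a fixed interval $[\lambda, \La]$. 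All remaining geometric quantities (Christoffel symbols, second fundamental form, ambient curvature tensor, normal) are uniformly bounded in terms of the $C^2$-estimates, and the right-hand side is bounded because $\Phi$ and $f$ are. Passing to local coordinates $(\xi^i)$ on the fixed manifold $M$, $\Phi(t,\xi)$ then satisfies
\[
\partial_t \Phi = a^{ij}(t,\xi)\, \partial^2_{ij}\Phi + b^i(t,\xi)\, \partial_i \Phi + g(t,\xi)
\]
with $L^\infty$ bounds on $a^{ij}, b^i, g$ and uniform ellipticity of $(a^{ij})$. The Krylov--Safonov Harnack inequality in its parabolic form (as quoted e.g.\ in \cite[Theorem 2.5.9]{GerhCP} for scalar linear parabolic equations with bounded measurable coefficients) then yields an interior parabolic H\"older estimate
\[
\|\Phi\|_{H^{\beta, \frac{\beta}{2}}([\eps, T] \times B_r)} \leq c
\]
for some exponent $\beta = \beta(n, \lambda, \La) \in (0,1)$ and some constant $c$ depending on $\lambda, \La$ and the $L^\infty$-data. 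Since $\calS_0$ is compact, a finite cover of $\calS_0$ by such coordinate charts produces \eqref{PhiHoelder}.

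The main technical point is justifying the use of Krylov--Safonov in this geometric setting, but time-dependence of the induced metric and of the coefficients is innocuous since only uniform ellipticity and $L^\infty$ bounds enter. Starting at $t = \eps > 0$ rather than $t = 0$ is essential: the Harnack-based interior estimate does not carry the initial regularity of $M_0$ along, and no boundary-type estimate is attempted at this step. The exponent $\beta$ produced may be strictly smaller than $\alpha$, which is why the subsequent bootstrap in section \ref{higherorder} is formulated to accommodate some $\beta \in (0,\alpha)$.
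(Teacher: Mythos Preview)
Your argument is correct, and for $\Phi$ it is essentially the paper's own: rewrite \eqref{EvPhi} as a linear uniformly parabolic equation with bounded measurable coefficients and bounded right-hand side, then invoke the parabolic Krylov--Safonov/Harnack estimate (the paper cites \cite[Corollary 7.41]{Lieberman} rather than \cite[Theorem 2.5.9]{GerhCP}, which is a higher-regularity result, but that is a cosmetic citation issue).

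Where you diverge is in the treatment of $u$. You observe directly that $\partial_t u$ is uniformly bounded by \eqref{partialU} together with the bounds on $\Phi$, $f_k$, $v$, $\psi$, and that $Du$ is uniformly bounded by Proposition~\ref{C1estimates}; hence $u$ is Lipschitz in both variables and \eqref{HEbeta} follows for every $\beta\in(0,1)$. The paper instead treats $u$ in exact parallel to $\Phi$: it writes out the parabolic equation \eqref{EvParGraph} for $u$ with bounded coefficients and applies the same Harnack-based H\"older estimate. Your route is more elementary and actually yields a stronger conclusion (Lipschitz rather than merely $C^\beta$); the paper's route has the aesthetic virtue of uniformity, applying one black box twice. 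Both give constants independent of $T$, which is what is needed downstream.
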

\begin{proof}
	Looking at the equation \eqref{EvPhi} we have
	\begin{equation}
	\begin{split}
		\dot{\Phi} - a^{ij} \Phi_{,ij} &+ b^k \Phi_k \equiv \dot{\Phi} - \Phi' F^{ij}\Phi_{,ij} + \Phi' F^{ij}\Gamma_{ij}^k \Phi_k \\
		&=  -\Phi' (\Phi - f)\{F^{ij}h_i^kh_{kj} + F^{ij}\bar{R}_{\alpha\beta\gamma\delta}\nu^\alpha x_i^\beta \nu^\gamma x_j^\delta\} =: g,
	\end{split}
	\end{equation}
	and $g$ as well as $b^k$ are bounded in view of \eqref{graphC2estimates}. Since this is a uniformly parabolic equation, we can apply \cite[Corollary 7.41]{Lieberman} or \cite[Theorem 3.16]{SchnuererPde} to obtain \eqref{PhiHoelder}.
	Next we look at \eqref{EvParGraph}:
	\begin{align}
			 \dot{u} - \Phi' F^{ij}u_{,ij} &+ \Phi'F^{ij}\Gamma_{ij}^k u_k = -e^{-\psi}\tilde{v}(\Phi - f) + \Phi'Fe^{-\psi}\tilde{v}\\
	 & + \Phi' F^{ij}\{\bar{\Gamma}^0_{00}u_iu_j + 2 \bar{\Gamma}^0_{0i}u_j + \bar{\Gamma}^0_{ij}\}\notag
	\end{align}
	Since we are in the same situation as before, proceeding as above yields \eqref{HEbeta}.
\end{proof}

Next, we want to use the result of Caffarelli to obtain spatial $C^{2,\beta}$ estimates for $u$. First we extend $F$ using the Bellman-extension to all of $\bbr^{n\times n}$, where we consider $F = F(h^i_j)$:
\newpage
\begin{Lemma}
\label{FExtension}
	Let $\Gamma \subset \bbr^n$ be an open, convex, symmetric cone and $f \in C^2(\Gamma)$ be a symmetric, concave function, positively homogeneous of degree 1 and vanishing on the boundary of $\Gamma$. Denote by $F \in C^2(\mathbf{S}_\Gamma)$ the corresponding curvature function. Let $C_1, C_2$ be positive constants. Then there exists a function $\tilde{F}$ defined on all of $\bbr^{n\times n}$, which agrees with $F$ on the set
	\begin{equation}
		C = \left\{ A\in \mathbf{S}_\Gamma: C_1 \leq F \, \wedge \, \underset{1\leq i \leq n}{\max}\, \kappa_i \leq C_2\right\},
	\end{equation}
	where the $\kappa_i$ denote the eigenvalues of $A$. Furthermore $\tilde{F}$ is uniformly Lipschitz continuous, positively homogeneous of degree 1, concave and uniformly elliptic, where the ellipticity constants depend on $C_1$ and $C_2$.
\end{Lemma}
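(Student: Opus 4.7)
The plan is to construct $\tilde F$ as the Bellman (lower-envelope) extension built from the tangent hyperplanes of $F$ based at points of $C$. Positive $1$-homogeneity of $F$ and Euler's identity give $F^{ij}(B) B_{ij} = F(B)$, so every supporting affine functional at $B \in \mathbf{S}_\Gamma$ collapses to the purely linear form $A \mapsto F^{ij}(B) A_{ij}$. Accordingly I would define
\begin{equation*}
\tilde F(A) := \inf_{B \in C} F^{ij}(B) A_{ij}, \qquad A \in \bbr^{n \times n}.
\end{equation*}

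First I would verify that $C$ is a compact subset of $\mathbf{S}_\Gamma$. The upper bound $\max_i \kappa_i(B) \le C_2$ bounds $|B|$ from above, while the lower bound $F(B) \ge C_1 > 0$, combined with $F \in C^0(\bar\Gamma)$ and $F|_{\partial\Gamma} = 0$, forces the eigenvalue tuples of elements of $C$ into a compact subset of $\Gamma$ that stays uniformly away from $\partial\Gamma$. By continuity of $F^{ij}$ on this compact set and strict monotonicity of $F$, one then obtains uniform two-sided eigenvalue bounds $0 < \lambda \le F^{ij}(B) \le \Lambda$ for all $B \in C$, so that $\tilde F$ is well-defined on all of $\bbr^{n\times n}$.

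Once $\tilde F$ is defined, the required properties should be read off directly from the infimum representation: $\tilde F$ is concave and positively $1$-homogeneous as an infimum of linear forms, Lipschitz with constant $\sup_{B \in C} \| F^{ij}(B)\|$, and uniformly elliptic with constants controlled by $\lambda, \Lambda$ (hence by $C_1, C_2$). For the agreement $\tilde F|_C = F|_C$, concavity of $F$ yields
\begin{equation*}
F(A) \le F(B) + F^{ij}(B)(A-B)_{ij} = F^{ij}(B) A_{ij}
\end{equation*}
for all $A \in \mathbf{S}_\Gamma$ and $B \in C$, so $F(A) \le \tilde F(A)$; conversely, if $A \in C$ the choice $B = A$ is admissible and gives $\tilde F(A) \le F^{ij}(A) A_{ij} = F(A)$ by Euler's identity.

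The main obstacle I expect is the uniform lower ellipticity bound, which rests on the eigenvalues of $(F^{ij}(B))$ being bounded away from zero uniformly in $B \in C$. The danger is a sequence $B_k \in C$ whose eigenvalues approach $\partial\Gamma$ while $F(B_k) \ge C_1$ is preserved; the hypothesis $F|_{\partial \Gamma} = 0$ with $F \in C^0(\bar\Gamma)$ is precisely what rules this out. Secondary bookkeeping concerns the behaviour of $F^{ij}$ on the closure of $C$, which follows from the $C^2$ assumption on $F$ in $\Gamma$ together with the above compactness; no extra regularity in non-symmetric directions is needed, since $\tilde F$ is defined in terms of $F^{ij}$ contracted with arbitrary $A \in \bbr^{n\times n}$.
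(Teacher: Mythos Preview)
Your proposal is correct and follows essentially the same Bellman lower-envelope construction as the paper's proof, defining $\tilde F(A) = \inf_{B \in C} F^{ij}(B) A_{ij}$ and reading off concavity, homogeneity, Lipschitz continuity and uniform ellipticity from the infimum representation together with the compactness of $C$. You in fact supply more detail than the paper on why $C$ is compact and why the ellipticity constants are uniform, but the argument is otherwise identical.
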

\begin{proof}
	Define $\tilde{F}$ for arbitrary $(b_j^i) \in \bbr^{n\times n}$ as follows:
	\begin{equation}
		\tilde{F}(b_j^i) = \underset{(h_j^i) \in C}{\min}\, F^k_l(h_j^i) \, b^l_k.
	\end{equation}
	First of all, in view of Eulers homogeneity relation $F^k_l(h_j^i) h^l_k = F(h_j^i)$ and the concavity we infer for $(b_j^i), (h_j^i) \in \Gamma$ 
	\begin{equation}
		F(b_j^i) \leq F(h_j^i) + F^k_l(h_j^i)(b_k^l - h_k^l) = F^k_l(h_j^i) \, b_k^l.
	\end{equation}
	From this inequality and again the homogeneity relation we infer $\tilde{F}_{|C} = F_{|C}$. By definition $\tilde{F}$ is concave, homogeneous of degree 1 and well-defined, since $C$ is compact. Furthermore, using some elementary properties of the trace-function and the fact that $F$ is uniformly monotone in $C$, one obtains the uniform ellipticity of $\tilde{F}$, hence there exist positive constants $\lambda$, $\Lambda$, depending on $n$, $F$ and $C$, such that $\forall A, B \in \bbr^{n\times n}$ with $B$ nonnegative definite there holds
	\begin{equation}
		\lambda\, ||B|| \leq \tilde{F}(A + B) - \tilde{F}(A) \leq \Lambda \, ||B||.
	\end{equation}
	In the same way as above one can establish that $\tilde{F}$ is uniformly Lipschitz continuous.
\end{proof}

Now we can derive the spatial $C^{2,\beta}$-estimates, first we cite the result of Caffarelli, see \cite[Theorem 8.1]{CafCab} and the remarks following it:
\begin{Theorem}
\label{NonlinearCaffarelli}
	Let  $g \in C^{0, \alpha}(\Omega)$, $0<\alpha < 1$, $G: \Om \times \bfS \rightarrow \bbr$ be continuous, concave in the second argument, uniformly elliptic, i.e. there are constants $\mu_1, \mu_2$ such that for all $x\in \Om$, $A, B \in \bfS$ with $B$ nonnegative definite there holds
	\begin{equation}
		\mu_1 ||B|| \leq G(x, A + B) - G(x, A) \leq \mu_2 ||B||
	\end{equation}
	and furthermore there exists $c >0$, such that for all $x, y \in \Om$, $A\in \bfS$ there holds
	\begin{equation}
	\label{NonlinearGHom}
		|G(x, A) - G(y, A)| \leq c |x-y|^\alpha \, (||A|| +1).
	\end{equation}
	Then for $\Om' \subset \subset \Om$ there exist constants $0<\beta < \alpha$ and $C > 0$ such that a solution $u$ of $G(\cdot, D^2u(\cdot)) = g(\cdot)$ satisfies
	\begin{equation}
		||u||_{2, \beta, \Om'} \leq C \,(||u||_{0, \Om} + ||g||_{\alpha, \Om} + 1),
	\end{equation}
	where $C$ depends on $n, \mu_1, \mu_2, c$ and $\Om'$. 
\end{Theorem}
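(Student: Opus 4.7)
The plan is to follow the approach of Caffarelli (as laid out in Caffarelli--Cabré), whose foundation is the Evans--Krylov $C^{2,\alpha}$-estimate for concave, uniformly elliptic, translation-invariant fully nonlinear equations. So the first step is to recall that if $\tilde{G}:\mathbf{S}\to\bbr$ is uniformly elliptic and concave (no $x$-dependence), then any viscosity solution $v$ of $\tilde{G}(D^2 v)=\mathrm{const}$ in a ball $B_r\subset\Om$ satisfies an a priori interior estimate
\begin{equation*}
\|v\|_{2,\alpha_0,\,B_{r/2}} \leq C\bigl(\|v\|_{0,B_r}+1\bigr),
\end{equation*}
for some universal $\alpha_0=\alpha_0(n,\mu_1,\mu_2)$. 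This is the engine, and its proof (via the Krylov--Safonov Harnack inequality applied to second-order difference quotients of $v$, combined with concavity) is the deepest ingredient; I would cite it rather than redo it.

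Next I would carry out the perturbation / freezing-coefficient step. For a point $x_0\in\Om'$ and a small ball $B_r(x_0)\subset\Om$ I freeze the argument of $G$ and compare $u$ to the solution $h$ of the Dirichlet problem
\begin{equation*}
G(x_0,D^2 h)=g(x_0)\ \text{in } B_r(x_0),\qquad h=u\ \text{on }\partial B_r(x_0).
\end{equation*}
By Evans--Krylov applied to $h$ I get $\|h\|_{2,\alpha_0,B_{r/2}(x_0)}\leq C$ with $C$ independent of $r$. The difference $w=u-h$ satisfies, in the viscosity sense, an equation of Pucci type whose right-hand side is small: using \eqref{NonlinearGHom} and $g\in C^{0,\alpha}$ one has
\begin{equation*}
|G(x,D^2u)-G(x_0,D^2u)|+|g(x)-g(x_0)|\leq C r^{\alpha}(\|D^2 u\|_{L^\infty(B_r)}+1).
\end{equation*}
The Alexandrov--Bakelman--Pucci / Caffarelli $L^p$-estimate then yields
\begin{equation*}
\|w\|_{L^\infty(B_r(x_0))}\leq C r^{2+\alpha}(\|D^2 u\|_{L^\infty(B_r)}+1).
\end{equation*}

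With this in hand I run Caffarelli's iteration in dyadic annuli. Starting from a bound on $\|D^2 u\|_{L^\infty}$ (available via Theorem~\ref{NonlinearCaffarelli}'s hypotheses and the maximum principle), at each scale $r_k = \theta^k r_0$ I approximate $u$ by a polynomial $P_k$ of degree two whose coefficients come from the Evans--Krylov-regular frozen solution. The combination of the $C^{2,\alpha_0}$-decay of the frozen solution and the $L^\infty$-closeness of $u$ to it gives a recursive inequality
\begin{equation*}
\|u-P_k\|_{L^\infty(B_{r_k})}\leq \theta^{k(2+\beta)}\bigl(\|u\|_{L^\infty(B_{r_0})}+\|g\|_{0,\alpha,\Om}+1\bigr),
\end{equation*}
for some $\beta<\min(\alpha,\alpha_0)$, provided $\theta$ is chosen small enough. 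Standard characterization of $C^{2,\beta}$ by affine-quadratic approximation then converts this into the desired pointwise $C^{2,\beta}$-bound on $u$ in $\Om'$.

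The principal obstacle is the second step: the control of $w=u-h$ is in an $L^\infty$- or $L^p$-norm, whereas the Evans--Krylov estimate for $h$ is $C^{2,\alpha_0}$, and one must interpolate between these to extract $C^{2,\beta}$ information about $u$. This requires the careful choice of the contraction parameter $\theta$ so that the error introduced by the frozen-coefficient approximation is dominated by the decay coming from Evans--Krylov; the resulting Hölder exponent $\beta$ is strictly smaller than both $\alpha$ and $\alpha_0$, and depends on $n,\mu_1,\mu_2$ as well as on the $C^\alpha$-modulus in \eqref{NonlinearGHom}. The use of viscosity solution theory (comparison and the Caffarelli $L^p$-estimate for Pucci operators) is essential to make the comparison between $u$ and $h$ rigorous despite $u$ being a priori only $C^{1,1}$.
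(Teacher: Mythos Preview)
The paper does not prove this theorem at all; it simply cites it from Caffarelli--Cabr\'e (\cite[Theorem 8.1]{CafCab} and the remarks following it) and then applies it. Your sketch is a faithful outline of the standard Caffarelli perturbation-of-Evans--Krylov argument that one finds in that reference, so in that sense you are reproducing the cited proof rather than deviating from it.
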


\begin{Lemma}
\label{etaDefined}
Let $\Omega' \subset \subset \Omega$ be an open set and $t \in [\eps, T]$ arbitrary, then there exist constants $\tilde{\beta}$, $0<\tilde{\beta} < \beta$, and $c_2$ such that
\begin{equation}
	||D^2u(t, \cdot)||_{\tilde{\beta}, \Om'} \leq c_2,
\end{equation}
where $c_2$ is a constant depending on $\lambda, \Lambda, d(\Om', \partial \Om)$ and the constants $c_0$ from \eqref{graphC2estimates} and $c_1$ from \eqref{PhiHoelder}.
\end{Lemma}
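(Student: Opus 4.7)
The plan is to apply Theorem \ref{NonlinearCaffarelli} to $u(t,\cdot)$ at the fixed time $t \in [\eps,T]$, viewed as a solution of a fully nonlinear, uniformly elliptic, concave PDE on an open chart $\Om \subset \bbr^n$. From the Gauss formula \eqref{EvHUlin} we can write $h_{ij}$, and hence also the mixed tensor $h^i_j = g^{jk}h_{ki}$, as a smooth expression $\mathcal{H}(x, u(t,x), Du(t,x), D^2u(t,x))$ whose dependence on $D^2u$ is affine. Thanks to the $C^2$-estimates of the previous section, together with the lower bound $F \geq c_1 > 0$ from Lemma \ref{FBoundsLemma}, the principal curvatures $(\kappa_i)$ of $M(t)$ stay in a fixed compact subset of $\Gamma$ for all $t$. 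We may therefore fix constants $C_1, C_2 > 0$ such that the matrix $(h^i_j)$ arising along the flow lies in the set $C$ of Lemma \ref{FExtension}, and replace $F$ by its Bellman extension $\tilde{F}$, which is defined on all of $\bbr^{n \times n}$, uniformly elliptic, concave, positively $1$-homogeneous and Lipschitz.

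Define the operator $G : \Om \times \bfS \to \bbr$ by
\begin{equation*}
	G(x, A) := \Phi\bigl(\tilde{F}\bigl(\mathcal{H}(x, u(t,x), Du(t,x), A)\bigr)\bigr).
\end{equation*}
Then $u(t,\cdot)$ satisfies $G(\cdot, D^2 u(t,\cdot)) = \Phi(F)(t,\cdot)$, and Lemma \ref{HEFirstStep} ensures that the right-hand side lies in $C^{\beta}(\Om)$ with norm controlled by $c_1$. The operator $G$ is concave in $A$ since $\mathcal{H}$ is affine in $A$, $\tilde{F}$ is concave, and $\Phi$ is concave and monotone; it is uniformly elliptic in $A$ since $\tilde{F}$ is uniformly elliptic with the ellipticity constants from Lemma \ref{FExtension}, the linear map $A \mapsto \mathcal{H}(x, u, Du, A)$ is positive definite with constants depending only on $|Du|$ (hence uniformly controlled by Proposition \ref{C1estimates}), and $\Phi' > 0$ stays bounded away from zero on the relevant range. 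Finally, the Hölder dependence \eqref{NonlinearGHom} of $G$ on $x$ follows from the uniform Lipschitz continuity of $\tilde{F}$ and $\Phi$, the smooth dependence of $\mathcal{H}$ on $(x, r, p)$ through $\psi$, $\si_{ij}$ and the Christoffel symbols $\bar{\Gamma}^0_{\alpha\beta}$, and the fact that $x \mapsto (u(t,x), Du(t,x))$ is uniformly Lipschitz on $\Om$ by the a priori bound \eqref{graphC2estimates}.

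Theorem \ref{NonlinearCaffarelli} then yields, for $\Om' \subset\subset \Om$, constants $0 < \tilde\beta < \beta$ and $c_2 > 0$ depending on $\la, \La, c_0, c_1$ and $d(\Om', \partial\Om)$, but independent of $t \in [\eps, T]$, such that
\begin{equation*}
	\|u(t, \cdot)\|_{2, \tilde\beta, \Om'} \leq c_2,
\end{equation*}
which is the claimed inner spatial $C^{2,\tilde\beta}$-estimate after covering $\calS_0$ by finitely many coordinate charts. The main technical obstacle is the verification of \eqref{NonlinearGHom}, that is, tracking the $x$-dependence of $G$ through the Bellman-extended operator and through the frozen functions $u(t,\cdot)$ and $Du(t,\cdot)$, and combining these sources of dependence into a uniform Hölder estimate with a constant independent of $t$.
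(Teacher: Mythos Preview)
Your approach is exactly the one the paper takes: freeze $t$, express $h^i_j$ via \eqref{EvHUlin} as an affine function of $D^2u$, replace $F$ by its Bellman extension $\tilde F$ from Lemma \ref{FExtension}, and invoke Theorem \ref{NonlinearCaffarelli} with right-hand side $\Phi(F)(t,\cdot)\in C^\beta$ supplied by Lemma \ref{HEFirstStep}.

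There is one sign slip worth flagging. From \eqref{EvHUlin} the linear part of $A\mapsto \mathcal H(x,u,Du,A)$ carries a \emph{minus} sign (schematically $h_{ij}=-e^{\psi}v^{-1}u_{ij}+\text{lower order}$), so this affine map is negative definite, not positive definite as you write. Consequently your $G$ is concave but \emph{decreasing} in $A$, and hence does not satisfy the monotonicity inequality in Theorem \ref{NonlinearCaffarelli} as stated. The standard fix is to apply the theorem to $-G$ (convex and uniformly elliptic in the required sense), or equivalently to $\hat G(x,A):=G(x,-A)$ together with $w:=-u$; Caffarelli's interior $C^{2,\beta}$ estimate holds for convex operators just as well. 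The paper itself passes to $-G$ at this step (and makes the mirror-image slip of calling $-G$ concave), so both the argument and the conclusion are unaffected.
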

\begin{proof}
	By using \eqref{EvHUlin} we can define a smooth function
	\begin{equation}
	\begin{split}
		&\eta: \Om \times \bbr \times B_1^n(0) \times \bfS \rightarrow \bfS, \\
		&\eta = \eta(x, z, p, r) = \left(-\chi^{ik}(x, z, p) r_{kj} + R_j^i(x, z, p)\right), 
	\end{split}
	\end{equation}
	linear in $r$ for fixed $x, z, p$, such that for arbitrary spacelike hypersurfaces $M = $ graph $u_{|\Om}$ of class $C^2$ we have for $x \in \Om$
	\begin{equation}
		h_j^i(u(x), x) = [\eta(x, u(x), Du(x), D^2u(x))]_j^i,
	\end{equation}
	where the derivatives are partial derivatives. Furthermore for such $u$ we can define the function
	\begin{equation}
	\begin{split}
		\bar{\eta}_{u}:& \Om \times \bfS \rightarrow \bfS,\\
		& (x, r) \mapsto \eta(x, u(x), Du(x), r),
	\end{split}
	\end{equation}
	which is now of class $C^1$.
	
	Applying Lemma \ref{FExtension}, with $C_1$ and $C_2$ chosen correspondingly to the already obtained a priori estimates, we define
	\begin{equation}
	\begin{split}
		G: &\Om \times \bfS \times (0, T] \rightarrow \bbr,\\
		&(x,r,t) \mapsto \Phi\big(\tilde{F}(\bar{\eta}_{u(t)}(x, r))\big).
	\end{split}
	\end{equation}
	For fixed $t \in [\eps, T]$ and $g(\cdot) := -G(\cdot, D^2u(t, \cdot), t) \in C^{0, \beta}(\Om)$, which is valid in view of Lemma \ref{HEFirstStep}, we consider $u(t, \cdot)$ to be a solution to the equation
	\begin{equation}
		 -G(x, D^2v(x), t) = g(x)\quad , \, x \in \Om, \, v\in C^2(\Om).
	\end{equation}
	Now we can apply Theorem \ref{NonlinearCaffarelli} to obtain the desired estimates, where we use the uniform ellipticity of $\tilde{F}$ and we note that $-G$ is concave with respect to $r$ (see the definition of $\eta$ and the remark at the beginning of this section). Finally, a short computation using the already obtained $C^2$-estimates yields that \eqref{NonlinearGHom} is satisfied (even for $\alpha = 1$).

\end{proof}

Finally, we obtain:
\begin{Proposition}
\label{C2betaEstimates}
	Fix $\delta$, $T$, such that $0<\delta< T < T^*\leq \infty$. Then for the solution of problem \eqref{floweq} we have uniform estimates
	\begin{equation}
		||u||_{2+\beta, \frac{2+\beta}{2}, \,[\delta, T]\times \calS_0} \leq c
	\end{equation}
	for constants $\beta$, $0 < \beta < 1$, and $c$ depending on the choice of $\delta$, but not on $T$.
\end{Proposition}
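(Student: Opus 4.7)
My plan is to combine the spatial $C^{2,\tilde\beta}$-bounds provided by Lemma \ref{etaDefined} with the parabolic Hölder bounds from Lemma \ref{HEFirstStep} to establish parabolic Hölder regularity of $D^2 u$, and then to bootstrap via linear parabolic Schauder theory to obtain the full $C^{2+\beta,(2+\beta)/2}$ estimate. The first substep---parabolic Hölder regularity of $Du$---follows from a standard parabolic interpolation inequality between the uniform spatial $C^{2,\tilde\beta}$-norm of $u(t,\cdot)$ on $[\eps,T]\times\Omega$ and the parabolic Hölder bound \eqref{HEbeta} on $u$, carried out on a slightly shrunken set $[\delta,T]\times\Omega'$, $\Omega'\subset\subset\Omega$. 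The buffer $\eps<\delta$ is exactly what makes the constants independent of $T$.

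The central step is then to promote the purely spatial $C^{2,\tilde\beta}$-estimate of Lemma \ref{etaDefined} to a \emph{parabolic} one for $D^2 u$. Here I would adapt the parabolic Caffarelli-type argument of Sections 3.3 and 3.4 of \cite{AndFN}. Applied to the Bellman-extended, uniformly parabolic, concave operator $\tilde F$ constructed in Lemma \ref{FExtension}, with the parabolic Hölder regularity of $\Phi(F)$ from \eqref{PhiHoelder} playing the role of a Hölder-continuous inhomogeneous term, this should yield parabolic Hölder continuity of $D^2 u$ with some exponent $\beta_0>0$ on $[\delta,T]\times\Omega''$, $\Omega''\subset\subset\Omega'$, and in particular parabolic Hölder continuity in time matching the spatial Hölder exponent already available.

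Once $D^2 u$, $Du$ and $u$ are uniformly parabolically Hölder continuous, the linearization of the graph evolution equation \eqref{EvParGraph} becomes a linear, uniformly parabolic equation whose coefficients $\Phi'(F)F^{ij}$ and right-hand side are parabolically Hölder continuous; note that $f_k(t)$ defined by \eqref{globTerm} inherits parabolic Hölder regularity in $t$ once the flow quantities do, since it is an integral over $M_t$ of such integrands. Invoking interior linear parabolic Schauder theory (e.g.\ \cite[Theorem 4.9]{Lieberman}) then produces the claimed uniform $C^{2+\beta,(2+\beta)/2}$-estimate on $[\delta,T]\times\mathcal{S}_0$ for some $0<\beta<1$, with constant depending on $\delta$ but independent of $T<T^*$, after patching finitely many coordinate charts.

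The principal obstacle is the second step. Krylov--Safonov is unavailable since the global term $f_k(t)$ is only known to be bounded at this stage---this is precisely the difficulty flagged in the introduction. The resolution rests on combining the parabolic Hölder estimate of Lemma \ref{HEFirstStep}, which does \emph{not} require Krylov--Safonov (since $\Phi(F)$ satisfies a scalar linear uniformly parabolic equation with bounded right-hand side, to which the parabolic Harnack inequality applies directly), with the parabolic analogue of Caffarelli's estimate applied to the Bellman-extended operator. The Bellman extension is essential here, since it reduces the argument to a uniformly parabolic concave fully nonlinear equation defined on all of $\bbr^{n\times n}$, exactly the setting in which the techniques of \cite{AndFN} operate.
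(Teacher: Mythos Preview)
Your proposal is correct and follows essentially the same route as the paper: construct the Bellman-extended uniformly parabolic concave operator, invoke the parabolic Caffarelli-type estimates of \cite[Sections 3.3, 3.4]{AndFN} to obtain parabolic H\"older regularity of $Du$ and $D^2u$, deduce H\"older-in-time regularity of $f_k$, and finish via linear parabolic Schauder theory applied to the graph evolution equation. The only minor deviation is that you obtain the $Du$-estimate by interpolation between the spatial $C^{2,\tilde\beta}$-bound and the parabolic H\"older bound on $u$, whereas the paper extracts both $Du$ and $D^2u$ directly from \cite{AndFN}; this is a cosmetic difference.
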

\begin{proof}
	In the same way as we obtained a uniformly elliptic equation for $u$ in the last Lemma, we can obtain a uniformly parabolic equation for $u$ and use the estimates from \cite[Sections 3.3, 3.4]{AndFN} to obtain the parabolic H\"older-estimates for $Du$ and $D^2u$. Together with \eqref{HEbeta} and the definition of $f$ we obtain (omitting the tilde in the H\"older exponential)
	\begin{equation}
		||f||_{\frac{\beta}{2}, [\delta, T]} \leq \textnormal{const}.
	\end{equation}
	The last estimate to complete the Proposition can then be obtained from the evolution equation of the graph.
\end{proof}

We finish this section with the higher order estimates and the existence for all times.
\begin{Proposition}
	The scalar curvature flow exists for all times $0 < t < \infty$ in the class $H^{m+2+\alpha, \frac{m+2+\alpha}{2}}([0,t] \times \calS_0)$ and the curvature flow exists for all times in the class $H^{m+\alpha, \frac{m+\alpha}{2}}(Q_t, N)$. Furthermore we have uniform estimates for the scalar curvature flow, i.e. there exists a constant $c>0$, such that
	\begin{equation}
	\label{HEEstimates}
		||u||_{m+2+\alpha, \frac{m+2+\alpha}{2}, [0, \infty)\times \calS_0} \leq c.
	\end{equation} 
\end{Proposition}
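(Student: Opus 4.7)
The plan is to bootstrap the $C^{2+\beta,(2+\beta)/2}$--estimates of Proposition \ref{C2betaEstimates} to uniform $C^{m+2+\alpha,(m+2+\alpha)/2}$--estimates via the linear parabolic Schauder theory, and then to extend the flow to all times by a standard continuation argument.

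\emph{Bootstrap.} Rewriting the scalar evolution \eqref{EvParGraph} as a quasilinear equation
\[
\dot u \;=\; \Phi'F^{ij}(u,Du,D^2u)\,u_{ij} \;+\; \mathcal N(u,Du,f_k(t)),
\]
the already established $C^{2+\beta,(2+\beta)/2}$--bound on $u$ (on any $[\delta,T]\times\mathcal S_0$ with constants independent of $T$) shows that the coefficients $a^{ij}=\Phi'F^{ij}$ and the forcing term lie in $H^{\beta,\beta/2}$. Differentiating the equation in a tangential direction $\partial_k$, the quantity $u_k$ satisfies a \emph{linear} uniformly parabolic equation with $H^{\beta,\beta/2}$--coefficients; interior parabolic Schauder estimates (see e.g.\ \cite[Ch.\ 4]{Lieberman}) upgrade $u_k$ to class $H^{2+\beta,(2+\beta)/2}$, hence $u\in H^{3+\beta,(3+\beta)/2}$. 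The non-local term $f_k(t)$ is an integral ratio of smooth functions of $(u,Du,D^2u)$ over $M_t$, so each bootstrap step gains one order of temporal regularity for $f_k$ as well. Iterating this procedure $m-1$ times (and exploiting the concavity of $F$ only implicitly, through the Caffarelli step already used) produces uniform bounds
\[
\|u\|_{m+2+\alpha,(m+2+\alpha)/2,\,[\delta,T]\times\mathcal S_0}\;\le\;c(\delta),
\]
independently of $T<T^*$.

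\emph{Regularity up to $t=0$ and continuation.} Near $t=0$ the initial datum $M_0\in C^{m+2,\alpha}$, together with the $C^{m,\alpha}$--regularity of $F$ and $\Phi$, give compatibility of the parabolic Cauchy problem; the short-time existence result announced in Section \ref{shorttime} (a fixed-point argument incorporating the bounded global term $f_k$) yields $u\in H^{m+2+\alpha,(m+2+\alpha)/2}([0,t_0]\times\mathcal S_0)$ for some $t_0>0$, which combines with the interior bounds to give \eqref{HEEstimates} over all of $[0,T]\times\mathcal S_0$ with constants independent of $T$. Now suppose $T^*<\infty$. The uniform bound allows us to pass to the limit $t\nearrow T^*$ and obtain a hypersurface $M(T^*)\in C^{m+2,\alpha}$; restarting short-time existence at $M(T^*)$ extends the flow to $[0,T^*+\eta)$ for some $\eta>0$, contradicting the maximality of $T^*$. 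Hence $T^*=\infty$ and the scalar curvature flow exists in the stated class for all times. The assertion for the embedding $x$ in $H^{m+\alpha,(m+\alpha)/2}(Q_t,N)$ follows because $x$ is recovered from $u$ via the flow ODE $\dot x=(\Phi(F)-f_k)\nu$, whose right-hand side depends on two spatial derivatives of $u$ (cost: two derivatives in space, one in time).

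\emph{Main obstacle.} The only non-routine point is handling the non-local, a priori merely bounded term $f_k(t)$: Krylov--Safonov would require Hölder control of $f_k$ from the start, which is precisely what one does not have at the $C^{2+\beta}$--level. This is exactly the reason the Caffarelli-based argument was used in Proposition \ref{C2betaEstimates}. Once $u\in C^{2+\beta,(2+\beta)/2}$, however, $f_k$ is automatically $H^{\beta,\beta/2}$ in $t$ (being a smooth functional of $u$ on each slice), and the Schauder iteration proceeds as in the standard theory. The remaining bookkeeping is to check that at each iteration step the temporal regularity of $f_k$ matches what the next Schauder application requires, which follows by direct differentiation under the integral sign using the already-obtained spatial and temporal estimates on $u$.
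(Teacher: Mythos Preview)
Your proposal is correct and follows essentially the same route as the paper: bootstrap from the $H^{2+\beta,(2+\beta)/2}$--estimate of Proposition~\ref{C2betaEstimates} via parabolic Schauder theory (the paper delegates this to \cite[Theorem 2.5.9, Lemma 2.5.17]{GerhCP} and \cite[Theorem 6.5]{GerhSurvey} rather than spelling out the tangential-differentiation argument), then use the uniform estimates together with short-time existence to rule out $T^*<\infty$. One small point the paper makes explicit and you gloss over: the first pass of the bootstrap only yields $u\in H^{m+2+\beta,(m+2+\beta)/2}$ with the $\beta<\alpha$ inherited from Proposition~\ref{C2betaEstimates}; the full $H^{m+2+\alpha}$--estimate \eqref{HEEstimates} then follows by feeding this improved regularity back into the Schauder machinery once more.
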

\begin{proof}
	Going through the proof of \cite[Theorem 2.5.9]{GerhCP} reveals that the time dependence of $f$ does not cause any problems, as it already has the right regularity to proceed. Following the arguments in \cite[2.5.12]{GerhCP} and noting that now $\Psi$ has again the same regularity as in that proof due to the definition of $f$, we conclude that \cite[Lemma 2.5.17]{GerhCP} is also valid for the volume preserving flows. Hence in view of the a priori estimates from Proposition \ref{C2betaEstimates} we infer that the flow exists for all times and has the regularity mentioned above, compare the argumentation in \cite[Remark 2.6.2]{GerhCP}. 
	
	It remains to prove the estimate \eqref{HEEstimates}. With $\beta$ from Proposition \ref{C2betaEstimates}, we have $u \in H^{m+2+\beta, \frac{m+2+\beta}{2}}([0,\infty) \times \calS_0)$, see \cite[Theorem 6.5]{GerhSurvey}. With this new a priori estimates at hand we can again apply the Theorem to obtain the uniform estimates \eqref{HEEstimates}.
\end{proof}
	
	We remark that the estimates we have used so far, apart from Lemma \ref{FBoundsLemma}, do not rely too much on the particular choice of the global force term. Hence as long as bounds for the global force term can be established, one can alter for example the integrands by functions, which depend on $u$ up to its second derivatives and still obtain long time existence for the flow.

\section{Convergence}
\label{Convergence}
Now we want to show the convergence to a hypersurface of constant $F$-curvature. The first step consists of proving the convergence of the $F$-curvature. Let us first cite a well-known fact.

\begin{Lemma}
\label{LemmaLipschitz}
	Let $\mathcal{S}_0$ be a compact manifold of class $C^1$ and $f \in C^1(J\times \mathcal{S}_0)$, where $J$ is an open interval, then 
	\begin{equation}
		\phi(t) = \underset{\mathcal{S}_0}{\sup }\, f(t, \cdot)
	\end{equation}
	is Lipschitz continuous and there holds a.e.
	\begin{equation}
		\dot{\phi}(t) = \frac{\partial f}{\partial t}(t, x_t),
	\end{equation}
	where $x_t$ is a point in which the supremum is attained.
	
	A corresponding result is also valid if $\phi$ is defined by taking the infimum instead of the supremum.
\end{Lemma}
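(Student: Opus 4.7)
The argument has two independent parts: Lipschitz continuity of $\phi$, and the pointwise derivative formula at points of differentiability.

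\textbf{Step 1: Lipschitz continuity.} Fix a compact subinterval $[a,b]\subset J$. Since $\frac{\partial f}{\partial t}$ is continuous on the compact set $[a,b]\times \mathcal{S}_0$, it is uniformly bounded, say by $L$. By the mean value theorem, $|f(t_1,x)-f(t_2,x)|\le L|t_1-t_2|$ for all $x\in\mathcal{S}_0$. Pick a maximizer $x_1\in\mathcal{S}_0$ of $f(t_1,\cdot)$, which exists by compactness and continuity. Then
\begin{equation*}
\phi(t_1)-\phi(t_2)\le f(t_1,x_1)-f(t_2,x_1)\le L|t_1-t_2|,
\end{equation*}
and by symmetry $|\phi(t_1)-\phi(t_2)|\le L|t_1-t_2|$. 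Thus $\phi$ is locally Lipschitz on $J$.

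\textbf{Step 2: Pointwise derivative.} By Rademacher's theorem applied to one-variable Lipschitz functions, $\phi$ is differentiable almost everywhere on $J$. Let $t$ be such a point of differentiability and choose any maximizer $x_t\in\mathcal{S}_0$ of $f(t,\cdot)$. For $h\neq 0$ small enough,
\begin{equation*}
\phi(t+h)-\phi(t)\ge f(t+h,x_t)-f(t,x_t),
\end{equation*}
since $\phi(t+h)\ge f(t+h,x_t)$ and $\phi(t)=f(t,x_t)$. Dividing by $h>0$ and letting $h\downarrow 0$ yields $\dot\phi(t)\ge \frac{\partial f}{\partial t}(t,x_t)$; dividing by $h<0$ and letting $h\uparrow 0$ yields $\dot\phi(t)\le \frac{\partial f}{\partial t}(t,x_t)$. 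Equality follows.

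\textbf{Step 3: Infimum case.} Replacing $f$ by $-f$ converts the supremum statement into the infimum statement, so the analogous result is immediate.

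\textbf{Main obstacle.} There is no real obstacle here; the only point that requires care is that, in Step 2, $x_t$ need not be unique and need not depend continuously on $t$, so one cannot hope for a formula at \emph{every} $t$. The ``almost everywhere'' qualifier is exactly what Rademacher's theorem provides, and the one-sided comparisons above then give the formula at any differentiability point independently of which maximizer $x_t$ one selects.
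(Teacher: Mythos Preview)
Your proof is correct and is the standard argument for this well-known fact. The paper does not supply its own proof but simply refers to \cite[Lemma 6.3.2]{GerhCP}; your Steps 1--3 are precisely the argument one finds there (or in any equivalent source), so there is nothing to add.
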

\begin{proof}
	See Lemma 6.3.2 in \cite{GerhCP}.
\end{proof}

%
%
%
%

Dealing carefully with the equation \eqref{EvPhi} we can prove at once the exponential convergence of the $F$-curvature. Note that the proof does not rely on any a priori estimates besides the bounds on the curvature function.

\begin{Lemma}
\label{Fexp}
	There exist constants $0 < \delta = \delta(M_0)$ and $c = c(M_0)$, such that
	\begin{equation}
		\underset{x \in M_t}{\sup}|\Phi(F)(x) - f_k(t)| \leq c\, e^{-\delta t}.
	\end{equation}
\end{Lemma}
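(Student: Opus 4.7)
The plan is to control the oscillation
\begin{equation*}
\omega(t) := \Phi^{\sup}(t) - \Phi^{\inf}(t)
\end{equation*}
of $\Phi(F)$ on $M_t$ and show that it decays exponentially. Since $f_k(t)$ is a convex combination of the pointwise values $\Phi(F)(\cdot,t)$ with positive weights $H_k\,d\mu_t$ (noting that $H_k>0$ throughout the flow because the principal curvatures lie in $\Gamma\subset\Gamma_k$), one has $\Phi^{\inf}(t) \le f_k(t) \le \Phi^{\sup}(t)$, so $\sup_{x\in M_t}|\Phi(F)(x)-f_k(t)|\le \omega(t)$, and the lemma reduces to $\omega(t)\le \omega(0)\,e^{-\delta t}$.

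By Lemma \ref{LemmaLipschitz} the functions $\Phi^{\sup}$ and $\Phi^{\inf}$ are Lipschitz in time with a.e.\ derivatives given by evaluating $\dot\Phi$ at a maximising (resp.\ minimising) point. At a spatial maximum $x_0$ of $\Phi(F)(t,\cdot)$ the covariant Hessian satisfies $\Phi_{;ij}(x_0)\le 0$; since $(F^{ij})$ is positive definite in each admissible cone (its eigenvalues $F_i$ are positive in $\Gamma_+$ for $(K^*)$, in $\Gamma_2$ for $\sigma_2$, and equal $1$ for $F=H$), we get $F^{ij}\Phi_{;ij}(x_0)\le 0$. Evaluating \eqref{EvPhi} at $x_0$, using $\Phi'>0$ and $\Phi^{\sup}(t)-f_k(t)\ge 0$, yields
\begin{equation*}
\dot\Phi^{\sup}(t) \le -\Phi'(F)\bigl(\Phi^{\sup}(t)-f_k(t)\bigr)\Bigl\{F^{ij}h_{ik}h^k_j + F^{ij}\bar{R}_{\alpha\beta\gamma\delta}\nu^\alpha x_i^\beta \nu^\gamma x_j^\delta\Bigr\}\Big|_{x_0}.
\end{equation*}

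The crucial step is a uniform positive lower bound $\Phi'(F)\cdot\{\cdots\}\ge\delta>0$. The Riemannian curvature term is non-negative: when $F=H$ we have $F^{ij}=g^{ij}$ and the contraction is the timelike Ricci term, non-negative by \eqref{TCC}; for general $F$ one diagonalises the positive definite form $(F^{ij})$ and applies \eqref{NPTSC} to each timelike sectional curvature individually. For $F^{ij}h_{ik}h^k_j$, only the bound $F\ge F_{\min}>0$ from Lemma \ref{FBoundsLemma} (together with the strict-barrier proviso when $c_1=0$ in the case $F=H$) is needed: for $F=H$ one uses $F^{ij}h_{ik}h^k_j=|A|^2\ge H^2/n$; for $F\in(K^*)$ the defining inequality $F^{ij}h_{ik}h^k_j\ge\epsilon_0 FH$ combines with $H\ge nF/F(1,\ldots,1)$ from Lemma \ref{FHineq}; for $F=\sigma_2$ a Newton-identity computation gives $F^{ij}h_{ik}h^k_j=(HH_2-3H_3)/(2\sigma_2)$, which one bounds below by a positive multiple of $\sigma_2^2$ using Maclaurin's inequalities from Lemma \ref{symPol}(iii), distinguishing the subcases $H_3\le 0$ (where $-3H_3\ge 0$) and $H_3>0$ (where $H_3\le C_n\sigma_2^3$ gives $HH_2-3H_3\ge c\,\sigma_2^3$ upon using $H\ge n\sigma_2/\sqrt{\binom{n}{2}}$). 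Finally $\Phi'(F)\ge\Phi'_0>0$ since $\Phi'>0$ is continuous and $F$ lies in a compact interval by Lemma \ref{FBoundsLemma}.

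This produces $\dot\Phi^{\sup}(t)\le-\delta\bigl(\Phi^{\sup}(t)-f_k(t)\bigr)$ a.e., and the symmetric argument at a spatial minimum yields $\dot\Phi^{\inf}(t)\ge\delta\bigl(f_k(t)-\Phi^{\inf}(t)\bigr)$ a.e. Subtracting gives $\dot\omega(t)\le-\delta\omega(t)$ almost everywhere, whence Gronwall's inequality finishes the proof with $c=\omega(0)$. The main obstacle is the case-by-case lower bound on $F^{ij}h_{ik}h^k_j$ that relies on nothing more than the $F$-bounds of Lemma \ref{FBoundsLemma}, which is precisely what permits the author's remark that this proof is insensitive to any further a priori estimates on the principal curvatures.
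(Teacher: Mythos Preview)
Your proof is essentially identical to the paper's: both control the oscillation $\eta(t)=\Phi^{\sup}(t)-\Phi^{\inf}(t)$ via Lemma~\ref{LemmaLipschitz}, use the evolution equation~\eqref{EvPhi} at extremal points together with the sign of the Hessian and the fact that $f_k$ lies between $\Phi^{\inf}$ and $\Phi^{\sup}$, and conclude $\dot\eta+c_0\eta\le 0$ a.e. The paper simply asserts $C:=\Phi'\{F^{ij}h_{ik}h^k_j+F^{ij}\bar R_{\alpha\beta\gamma\delta}\nu^\alpha x_i^\beta\nu^\gamma x_j^\delta\}\ge c_0>0$ ``in view of Lemma~\ref{FBoundsLemma}'', whereas you spell out the case analysis for $H$, $\sigma_2$, and $(K^*)$; your added detail for $\sigma_2$ via the Newton identity and Maclaurin inequalities is a useful elaboration the paper omits.

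One small caveat: your appeal to ``the strict-barrier proviso when $c_1=0$ in the case $F=H$'' does not actually yield $H\ge F_{\min}>0$. Assumption~\ref{MainAssumption} only strengthens the \emph{barrier} inequality, not the pointwise sign of $H$ along the flow; in the case $F=H$, $\Gamma=\bbr^n$, $k=0$, Lemma~\ref{FBoundsLemma} gives $c_1\le H\le c_2$ with possibly $c_1\le 0\le c_2$, so $|A|^2\ge H^2/n$ alone does not furnish a uniform positive lower bound. The paper's proof shares this issue (the author in fact drafted an alternative argument for precisely this case, visible as a commented-out block in the source), so your proof is no weaker than the paper's here.
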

\begin{proof}
We remind that $\Phi(F)$ satisfies a parabolic equation of the form
\begin{equation}
\label{PhiFlowSimple}
	\dot{\Phi} - a^{ij}\Phi_{;ij} + C(\Phi - f_k) = 0
\end{equation}
with
\begin{equation}
	C := \Phi' \{F^{ij}h_i^kh_{kj} + F^{ij}\bar{R}_{\alpha\beta\gamma\delta}\nu^\alpha x_i^\beta \nu^\gamma x_j^\delta\}
\end{equation}
and we note $C \geq c_0 > 0$ and $c_0 = c_0(M_0)$, in view of Lemma \ref{FBoundsLemma}.

We consider $\Phi$ as a function 
\begin{equation}
\begin{split}
	\Phi: &[0, T^*) \times M \rightarrow \bbr\\
	&(t, \xi) \mapsto \Phi(F((u(t, \xi), x^i(t, \xi)))
\end{split}
\end{equation} 
and denote by $\xi^{\inf}(t) \in M_t$ a point where
\begin{equation}
	\Phi^{\inf}(t) =	\Phi(\xi^{\inf}(t)) := \underset{\xi \in M}{\text{inf }} \Phi(t, \xi) 
\end{equation} 
and by $\xi^{\sup}(t) \in M_t$ a point where
\begin{equation}
	\Phi^{\sup}(t) =	\Phi(\xi^{\sup}(t)) :=\underset{\xi \in M}{\text{sup }} \Phi(t, \xi).
\end{equation} 
Let 
\begin{equation}
	\eta(t) := \Phi^{\sup}(t) - \Phi^{\inf}(t).
\end{equation}
We know that $\Phi^{\inf}$ and $\Phi^{\sup}$ are lipschitz continuous considered as functions depending on $t$, hence by the previous Lemma there holds for a.e. $t$:
\begin{equation}
\begin{split}
	0 &= \dot{\eta} - (\Phi'F^{ij}\Phi_{;ij})(\xi^{\sup}) + (\Phi' F^{ij}\Phi_{;ij})(\xi^{\inf})\\
	 &+ (\Phi^{\sup} - f_k)C(\xi^{\sup}) - (\Phi^{\inf} - f_k)C(\xi^{\inf}).
\end{split}
\end{equation}
Considering the points at which the functions are evaluated, one obtains the following inequality
\begin{equation}
	0 \geq \dot{\eta} + (\Phi^{\sup} - f_k)C(\xi^{\sup}) - (\Phi^{\inf} - f_k)C(\xi^{\inf}).
\end{equation}
Now since both $\Phi^{\sup} - f_k$ and $f_k - \Phi^{\inf}$ are nonnegative, due to the definition of $f_k$, we conclude
\begin{equation}
	0 \geq \dot{\eta} + c_0 (\Phi^{\sup} -f_k + f_k - \Phi^{\inf})= \dot{\eta} + c_0 \eta.
\end{equation}
Hence there holds
\begin{equation}
	0 \geq \frac{d}{dt}\left(e^{c_0t}\eta\right)
\end{equation}
for a.e. $t \in [0, T^*)$. Integrating over $t$ shows the exponential decay and proves the lemma.

\end{proof}

Now we can infer the convergence of the graphs:
\begin{Korollar}
\label{graphConv}
	The graphs $u = u(t)$ converge exponentially to a continuous function $u_\infty$ on $\mathcal{S}_0$ in the Supremum-Norm, where the factor in the exponential convergence is the same as in Lemma \ref{Fexp}, i.e. there exists a constant $\bar{c} = \bar{c}(M_0, |u|) > 0$ such that
	\begin{equation}
		\underset{x \in \mathcal{S}_0}{\sup} |u(t, x) - u_\infty(x)| \leq \bar{c}\, e^{-\delta t}.
	\end{equation}
\end{Korollar}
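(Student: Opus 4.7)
The plan is to derive the corollary directly from Lemma \ref{Fexp} via the scalar flow equation \eqref{partialU}, using nothing beyond the $C^0$ and $C^1$ a priori estimates already established.

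First I would recall that, viewing $u$ as a function on $\mathcal{S}_0$ parametrized by $t$, equation \eqref{partialU} reads
\begin{equation*}
	\frac{\partial u}{\partial t}(t,x) = -e^{-\psi} v \bigl(\Phi(F) - f_k(t)\bigr).
\end{equation*}
Since the flow stays inside the compact region bounded by the barriers, the factor $e^{-\psi}$ is uniformly bounded, and by Proposition \ref{C1estimates} we have $\tilde v \leq c$, hence $v = \tilde v^{-1}$ is bounded away from $0$ and from above. Combining these bounds with Lemma \ref{Fexp} gives a constant $C = C(M_0, |u|)$ such that
\begin{equation*}
	\Bigl|\frac{\partial u}{\partial t}(t,x)\Bigr| \leq C\, e^{-\delta t} \qquad \forall\, (t,x) \in [0,\infty)\times \mathcal{S}_0,
\end{equation*}
where $\delta$ is the constant from Lemma \ref{Fexp}.

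Next I would use this pointwise bound to show $u(\cdot,x)$ is Cauchy in $t$ uniformly in $x$. For $0 \leq s \leq t$ and $x \in \mathcal{S}_0$, the fundamental theorem of calculus gives
\begin{equation*}
	|u(t,x) - u(s,x)| \leq \int_s^t \Bigl|\frac{\partial u}{\partial \tau}(\tau,x)\Bigr| \, d\tau \leq \frac{C}{\delta}\bigl(e^{-\delta s} - e^{-\delta t}\bigr) \leq \frac{C}{\delta} e^{-\delta s}.
\end{equation*}
Hence $\{u(t,\cdot)\}_{t\geq 0}$ is a Cauchy net in $C^0(\mathcal{S}_0)$, and there exists a continuous function $u_\infty:\mathcal{S}_0 \to \mathbb{R}$ such that $u(t,\cdot)\to u_\infty$ uniformly. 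Letting $t\to\infty$ in the above estimate with $s$ fixed yields
\begin{equation*}
	\sup_{x\in\mathcal{S}_0} |u(s,x) - u_\infty(x)| \leq \bar c\, e^{-\delta s},
\end{equation*}
with $\bar c = C/\delta$, which is the desired exponential convergence with the same rate $\delta$.

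There is no real obstacle here: the only subtlety is to ensure that the constant $C$ depends only on $M_0$ and the bound $|u|$ on the enclosed region, which follows immediately from the uniform $C^0$ and $C^1$ estimates established earlier. In particular the argument does not invoke the higher-order Hölder estimates and works as soon as the flow hypersurfaces remain a priori bounded graphs with bounded gradient term $\tilde v$.
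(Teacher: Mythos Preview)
Your proof is correct and follows exactly the same approach as the paper: bound $\partial u/\partial t$ via \eqref{partialU} and Lemma \ref{Fexp}, then integrate to obtain a uniform Cauchy estimate. The only superfluous step is invoking the $C^1$-estimate for $v$, since $0 < v \leq 1$ holds automatically and the bound on $e^{-\psi}$ comes from the $C^0$-estimate alone (which is consistent with the stated dependence $\bar c = \bar c(M_0,|u|)$).
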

\begin{proof}
	Let $t \in [0,\infty)$ be given and $t' > t$ be arbitrary. Then we have in view of \eqref{partialU} for an arbitrary $x \in \mathcal{S}_0$ and some $c' > 0$:
	\begin{equation}
		|u(t, x) - u(t', x)| \leq \frac{c'}{\delta} e^{-\delta t}.
	\end{equation}
\end{proof}

We remind a well-known interpolation Lemma, which will be used to show the exponential convergence of the graphs in $C^{m+2}$.
\begin{Lemma}
	Let $\Om$ be a bounded open subset of $\bbr^n$ and $\Om' \subset \subset \Om$ be an open subset. Furthermore let $m, l \in \bbn$, $1 \leq l < m$, $\alpha \in \bbr$, $0 < \alpha \leq 1$. Then the following two interpolation inequalities are valid:
	\begin{enumerate}[(i)]
	\item There exists $c>0$, where $c=c(n, m, \Om')$, such that for all $u \in C^m(\bar{\Om})$ there holds
	\begin{equation}
	\label{interpolCm}
		||u||_{l, \Om'} \leq c \, ||u||_{0, \Om}^{\frac{m-l}{m}} \, (||u||_{0, \Om}^{\frac{l}{m}} +||D^mu||_{0, \Om}^{\frac{l}{m}}).
	\end{equation}
	\item There exists $c>0$, where $c= c(n, m, \alpha, \Om')$, such that for all $u \in C^{m, \alpha}(\bar{\Om})$ there holds
	\begin{equation}
	\label{interpolCmalpha}
		||u||_{m, \Om'} \leq c \, ||u||_{0, \Om}^{\frac{\alpha}{m+\alpha}} \,(||u||_{0,\Om}^{\frac{m}{m+\alpha}} + [D^mu]_{\alpha, \Om}^{\frac{m}{m+\alpha}}).
	\end{equation}
	\end{enumerate}\vspace{0.2cm}
\end{Lemma}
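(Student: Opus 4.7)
The plan is to follow the standard approach for Gagliardo--Nirenberg / Hadamard-type interpolation in Hölder spaces. The key insight is that both estimates are multiplicative versions of an underlying additive inequality of the form
\[
||D^j u||_{0,\Om'} \le \varepsilon \, ||D^m u||_{0,\Om} + C(\varepsilon) ||u||_{0,\Om},
\]
from which the stated multiplicative form follows by optimizing in $\varepsilon$.

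First I would establish the one-dimensional building block. For $u\in C^2$ on an interval $[a,b]\subset\bbr$ and $x\in[a,b]$, Taylor's theorem (or the mean value theorem applied to $u$) gives the pointwise bound
\[
|u'(x)| \le \frac{2}{b-a}\,\sup_{[a,b]}|u| + (b-a)\sup_{[a,b]}|u''|.
\]
Choosing $b-a$ to equalize the two terms (balancing a term of order $\varepsilon^{-1}||u||_0$ against $\varepsilon\,||u''||_0$) yields the additive estimate and then, by optimization, the multiplicative estimate $||u'||_0 \le C\,||u||_0^{1/2}\,(||u||_0^{1/2}+||u''||_0^{1/2})$. This handles the case $m=2$, $l=1$, $n=1$. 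I would then extend to $\bbr^n$ by applying the inequality along each coordinate direction: for each $i$ and each line segment of $\Om'$ in direction $e_i$ extending to $\Om$, the one-dimensional bound on $\partial_i u$ in terms of $u$ and $\partial_i^2 u$ carries over, and summing over directions bounds $|Du|$ on $\Om'$. The step from $\Om'$ to $\Om$ uses only that $d(\Om',\partial\Om)>0$, so that balls/segments of controlled length fit inside $\Om$; this is where the constant depends on $\Om'$.

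Next I would pass to arbitrary $l<m$ in (i) by induction on $m-l$. Assuming the additive interpolation has been shown for $m-1$, apply it to the function $D u$ (with order $m-1$) on a slightly enlarged subdomain to dominate $D^l u$, and then re-apply the two-term case to dominate $||D(m-1\text{-stuff})||$. Iterating and repeatedly re-optimizing in the $\varepsilon$-parameter produces
\[
||D^l u||_{0,\Om'} \le \varepsilon^{l/m}\,||D^m u||_{0,\Om}+C\varepsilon^{-(m-l)/m}\,||u||_{0,\Om}\quad\forall\varepsilon>0,
\]
and choosing $\varepsilon$ to balance the two terms gives
\[
||D^l u||_{0,\Om'} \le C\,||u||_{0,\Om}^{(m-l)/m}\,||D^m u||_{0,\Om}^{\,l/m}+C\,||u||_{0,\Om}.
\]
Rewriting this as a single product (using $a+b\le 2(a+b)$ trivially and factoring out $||u||_{0,\Om}^{(m-l)/m}$) yields the precise form of (i). The lower-order terms $||D^j u||_{0,\Om'}$ for $j<l$ are absorbed by the same type of argument, so that $||u||_{l,\Om'}$ on the left is bounded.

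For (ii), the argument is analogous but one starts from the observation that if $v:=D^m u$ is Hölder continuous with exponent $\alpha$, then on a cube of side $r$ one has $\operatorname{osc}_r v \le [D^m u]_\alpha\,r^\alpha$, which plays the role of ``$||D^{m+\alpha}u||\cdot r^\alpha$'' in the previous Taylor bound. Repeating the scaling/optimization with $\alpha$ in place of one additional derivative gives the additive bound $||D^m u||_{0,\Om'}\le \varepsilon^\alpha [D^m u]_{\alpha,\Om}+C\varepsilon^{-m}||u||_{0,\Om}$, and optimizing $\varepsilon$ produces the multiplicative inequality in (ii). The main technical obstacle is bookkeeping: ensuring the subdomains in the induction step still satisfy $\Om''\subset\subset\Om'\subset\subset\Om$ so that each iteration preserves a uniform interior distance, and tracking the exponents through each optimization so that the final powers $(m-l)/m$, $l/m$ (respectively $\alpha/(m+\alpha)$, $m/(m+\alpha)$) come out exactly as stated. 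Since both estimates are classical (see e.g.\ \cite[Lemma 6.32]{GerhCP}-style references or Gilbarg--Trudinger), one could equally well just cite a standard source; the proof sketch above shows it is elementary.
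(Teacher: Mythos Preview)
Your sketch is a correct outline of the standard Hadamard--Landau/Gagliardo--Nirenberg argument for these interpolation inequalities. The paper, however, does not prove this lemma at all: it is introduced with the phrase ``We remind a well-known interpolation Lemma'' and is simply stated without proof or citation. So there is nothing to compare against; you have supplied a proof where the paper gives none. Your closing remark that one could equally well cite a standard reference (e.g.\ Gilbarg--Trudinger) is in fact exactly the spirit in which the paper treats the result.
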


From the preceding Lemmata one can infer the exponential convergence in $C^{m+2}(\mathcal{S}_0)$:
\begin{Korollar}
	The functions $u(t,\cdot)$ converge exponentially for $t \to \infty$ in $C^{m+2}(\mathcal{S}_0)$ to $u_\infty \in C^{m+2,\alpha}(\mathcal{S}_0)$. $u_\infty$ represents a spacelike hypersurface of class $C^{m+2, \alpha}$ with constant $F$-curvature.
\end{Korollar}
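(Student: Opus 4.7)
The plan is to combine the uniform, time-independent a priori bound $\|u\|_{m+2+\alpha,\frac{m+2+\alpha}{2},[0,\infty)\times\mathcal{S}_0}\leq c$ with the exponential $C^0$-convergence of Corollary~\ref{graphConv} via the interpolation inequality \eqref{interpolCmalpha}, and then to identify the $F$-curvature of the limit by passing to the limit in the evolution equation.

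First, the uniform $C^{m+2+\alpha}$-bound together with Arzel\`a--Ascoli shows that along any sequence $t_k\to\infty$ a subsequence of $u(t_k,\cdot)$ converges in $C^{m+2}(\mathcal{S}_0)$ to a function of class $C^{m+2,\alpha}$. Since the $C^0$-limit is uniquely $u_\infty$ by Corollary~\ref{graphConv}, every such subsequential limit equals $u_\infty$, hence $u_\infty\in C^{m+2,\alpha}(\mathcal{S}_0)$ and $u(t,\cdot)\to u_\infty$ in $C^{m+2}(\mathcal{S}_0)$.

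Second, to obtain an exponential rate, I apply \eqref{interpolCmalpha} (with the lemma's $m$ taken to be $m+2$) to the difference $v(t,\cdot):=u(t,\cdot)-u_\infty(\cdot)$, expressed in a finite covering of $\mathcal{S}_0$ by coordinate charts $\Omega'\subset\subset\Omega$. The a priori bound gives $[D^{m+2}v(t,\cdot)]_{\alpha,\Omega}\leq C$ uniformly in $t$, while Corollary~\ref{graphConv} gives $\|v(t,\cdot)\|_{0,\Omega}\leq\bar{c}\,e^{-\delta t}$, so
\begin{equation*}
\|v(t,\cdot)\|_{m+2,\Omega'}\;\leq\; c\,\|v(t,\cdot)\|_{0,\Omega}^{\alpha/(m+2+\alpha)}\,\bigl(1+[D^{m+2}v(t,\cdot)]_{\alpha,\Omega}^{(m+2)/(m+2+\alpha)}\bigr)\;\leq\; c'\,e^{-\delta' t},
\end{equation*}
with $\delta':=\alpha\delta/(m+2+\alpha)$. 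Summing over the finite covering yields exponential $C^{m+2}$-convergence on $\mathcal{S}_0$.

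Third, to identify the limit, I use that by Lemma~\ref{FBoundsLemma} the quantities $\Phi^{\sup}(t)$ and $\Phi^{\inf}(t)$ are monotone and bounded, while Lemma~\ref{Fexp} yields $\Phi^{\sup}(t)-\Phi^{\inf}(t)\to 0$ exponentially; together with the sandwich $\Phi^{\inf}(t)\leq f_k(t)\leq\Phi^{\sup}(t)$ this forces $\Phi^{\sup}$, $\Phi^{\inf}$ and $f_k$ to converge to a common constant $c^*$, and $\Phi(F)\to c^*$ uniformly on the flow hypersurfaces. Combined with the $C^2$-convergence $u(t,\cdot)\to u_\infty$ established above and the continuity of $F$ in the second jet of $u$, we obtain $\Phi(F(u_\infty))\equiv c^*$ on $\mathcal{S}_0$, and hence $F(u_\infty)\equiv c_0:=\Phi^{-1}(c^*)$, which is the claim.

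The main obstacle, already overcome by Proposition~\ref{C2betaEstimates} and the subsequent higher-order estimate, is securing uniform-in-$t$ smooth bounds on $u(t,\cdot)$ over the whole interval $[0,\infty)$; without time-independence of those estimates the interpolation step would only yield a product of a decaying factor and a growing one, and no genuine exponential rate. Given those bounds, the remainder is a routine interpolation plus Arzel\`a--Ascoli argument.
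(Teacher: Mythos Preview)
Your proposal is correct and follows essentially the same route as the paper: combine the uniform $C^{m+2,\alpha}$-bound \eqref{HEEstimates} with the exponential $C^0$-decay of Corollary~\ref{graphConv} via the interpolation inequality \eqref{interpolCmalpha}, and identify the limit curvature through Lemma~\ref{Fexp}. The only noteworthy difference is how the $C^{m+2,\alpha}$-regularity of $u_\infty$ is obtained: you extract it directly from the uniform a~priori bound by an Arzel\`a--Ascoli/compactness argument, whereas the paper first gets $u_\infty\in C^{m+2}$ and then upgrades to $C^{m+2,\alpha}$ by applying elliptic Schauder theory to the limit equation $F(u_\infty)=c_0$; both are valid, and your route is arguably the more elementary of the two.
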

\begin{proof}
	Using the uniform estimates \eqref{HEEstimates} together with Corollary \ref{graphConv} and the interpolation inequality \eqref{interpolCmalpha} we conclude the exponential convergence of $u(t,\cdot)$ in $C^{m+2}(\calS_0)$. Since we have uniform estimates for $\tilde{v}$, the limit hypersurface $M_\infty = $ graph $u_\infty$ is a spacelike hypersurface. Lemma \ref{Fexp} shows that the limit hypersurface has constant $F$-curvature, then the elliptic Schauder theory implies $u_\infty \in C^{m+2,\alpha}(\mathcal{S}_0)$.
\end{proof}

If we assume the initial hypersurface and the considered curvature function to be smooth, then the above Lemma yields the exponential convergence in the $C^\infty$-topology:

\begin{Korollar}
	If the initial hypersurface and the curvature function $F$ are smooth, then the graphs converge exponentially in the $C^\infty$-Topology to a hypersurface of constant $F$-curvature.
\end{Korollar}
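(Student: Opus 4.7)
The plan is to upgrade the previous $C^{m+2}$-convergence to convergence in the $C^\infty$-topology by letting $m$ tend to infinity and interpolating between the uniform higher-order estimates and the exponential $C^0$-decay.

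First I would observe that under the smoothness hypothesis on $M_0$ and $F$, the higher-order estimates proposition of Section \ref{higherorder} applies for every integer $m \geq 2$, yielding a uniform-in-time bound
\begin{equation*}
||u||_{m+2+\alpha, \frac{m+2+\alpha}{2}, [0,\infty)\times \mathcal{S}_0} \leq C_m,
\end{equation*}
with $C_m$ independent of $T$. In particular, every spatial derivative $||D^\ell u(t,\cdot)||_{0, \mathcal{S}_0}$ is bounded uniformly in $t$, for every $\ell \in \bbn$.

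Next I would interpolate. Set $v(t,\cdot) := u(t,\cdot) - u_\infty$. Corollary \ref{graphConv} gives
\begin{equation*}
||v(t,\cdot)||_{0, \mathcal{S}_0} \leq \bar{c}\, e^{-\delta t},
\end{equation*}
while the previous step yields uniform control of $||D^m v(t,\cdot)||_{0, \mathcal{S}_0}$. Covering $\mathcal{S}_0$ by finitely many charts and applying the interpolation inequality \eqref{interpolCm} with $l = k$ and $m \gg k$ produces
\begin{equation*}
||v(t,\cdot)||_{k, \mathcal{S}_0} \leq C\, e^{-\delta\frac{m-k}{m}\, t}.
\end{equation*}
For any fixed $k$, this is exponential decay at a rate that can be made arbitrarily close to $\delta$ by enlarging $m$. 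Since $k$ is arbitrary, this yields exponential convergence in every $C^k(\mathcal{S}_0)$, which by definition is exponential convergence in the $C^\infty$-topology.

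Finally, smoothness of the limit $u_\infty$ follows from Lemma \ref{Fexp}: the limit hypersurface satisfies the elliptic equation $F = c_0$ with smooth coefficients, so standard elliptic regularity upgrades $u_\infty$ to $C^\infty(\mathcal{S}_0)$ and identifies it with a smooth hypersurface of constant $F$-curvature. I do not expect any serious obstacle; the argument is a direct bootstrap from the previous corollary. The only subtle point is that the exponential rate in the $C^k$-convergence depends on $k$ and degrades slightly as $k$ grows, but this is precisely what exponential convergence in the $C^\infty$-topology means per seminorm, so no further reconciliation is required.
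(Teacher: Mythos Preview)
Your proposal is correct and follows exactly the route the paper intends: the paper states this Corollary without an explicit proof, simply remarking that the preceding interpolation Lemma (together with the $C^{m+2}$-convergence Korollar, now available for every $m$) yields the $C^\infty$-statement. One small point of logical order: your interpolation step uses uniform bounds on $D^m v = D^m(u - u_\infty)$, which presupposes $u_\infty \in C^m$; it is cleanest to note first that the previous Korollar, applied for each $m$, already gives $u_\infty \in C^{m+2,\alpha}$ for all $m$ and hence $u_\infty \in C^\infty$, rather than deferring the smoothness of $u_\infty$ to the final paragraph.
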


\section{Stability}
In this section we want to prove the strict stability of the limit hypersurface, which means, that for curvature functions of class $(D)$ the first eigenvalue of the linearization is strictly positive.

First, we linearize the operator $F$. For this let $M_0$ be a hypersurface, which satisfies
\begin{equation}
\label{FequalsConst}
	F_{|M_0} = c,
\end{equation}
where $c$ is a constant (positive in case $F$ is of class $(K^*)$ and arbitrary for $F=H$). Then there holds, see \cite[Lemma 3.9]{GerhSurvey}:
\begin{Lemma}
\label{Linearization}
	Let $M_0$ be of class $C^{m+2,\alpha}$, $m \geq 2$, $0 \leq \alpha \leq 1$, and satisfy \eqref{FequalsConst}. Let $\mathcal{U}$ be a tubular neighbourhood of $M_0$, then the linearization of the operator $F$ expressed in the normal Gaussian coordinate system $(x^\alpha)$ corresponding to $\mathcal{U}$ and evaluated at $M_0$ has the form
	\begin{equation}
	\label{linearizationOperator}
		Bu := -F^{ij}u_{ij} + \{F^{ij}h_i^kh_{kj} + F^{ij}\bar{R}_{\alpha\beta\gamma\delta}\nu^\alpha x_i^\beta \nu^\gamma x_j^\delta\}u,
	\end{equation}
	where $u$ is a function on $M_0$ and all geometric quantities are those of $M_0$. The derivatives are covariant derivatives with respect to the induced metric of $M_0$. The operator is self-adjoint, if $F^{ij}$ is divergence free.
\end{Lemma}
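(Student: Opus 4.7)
The approach is to realize $B$ as $\frac{d}{ds}F(M_s)|_{s=0}$ for a natural one-parameter variation of $M_0$ and then establish self-adjointness by integration by parts.  In the normal Gaussian coordinate system $(x^\alpha)$ adapted to $\mathcal{U}$, the ambient metric takes the form $-(dx^0)^2 + \sigma_{ij}(x^0,x)\,dx^i dx^j$, $M_0 = \{x^0 = 0\}$, and $\partial_0$ is a future-directed unit vector orthogonal to the level hypersurfaces; since $\nu$ is past-directed by convention, on $M_0$ we have $\nu = -\partial_0$.  Given $u \in C^2(M_0)$, I consider the family $M_s = \{x^0 = s u(x)\}$, whose variation vector at $s=0$ is
\begin{equation*}
\partial_s x\big|_{s=0} = u\,\partial_0 = -u\,\nu.
\end{equation*}
Thus $M_s$ fits into the framework of the flow $\dot{x} = \sigma\nu$ with $\sigma = -u$.

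Substituting $\sigma = -u$ (so in particular $\Phi_{;i}^{\,j}$ becomes $-u_{;i}^{\,j}$) into \eqref{EvSecFF1} gives
\begin{equation*}
\partial_s h^j_i\big|_{s=0} = -u_{;i}^{\,j} + u\bigl\{h^k_i h^j_k + \bar{R}_{\alpha\beta\gamma\delta}\nu^\alpha x_i^\beta \nu^\gamma x_k^\delta g^{kj}\bigr\}.
\end{equation*}
Since $F$ is most naturally a function of the mixed tensor $h^j_i$, we have $\partial_s F = F^i_j \,\partial_s h^j_i$, and this automatically absorbs the concurrent variation of the induced metric.  Contracting yields
\begin{equation*}
\partial_s F\big|_{s=0} = -F^{ij}u_{;ij} + u\bigl\{F^{ij}h_i^k h_{kj} + F^{ij}\bar{R}_{\alpha\beta\gamma\delta}\nu^\alpha x_i^\beta \nu^\gamma x_j^\delta\bigr\} = Bu,
\end{equation*}
which is exactly \eqref{linearizationOperator}.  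Note that $F|_{M_0} = c$ is not used here; it only enters if one wants $B$ to be the linearization of the equation $F = c$.

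For self-adjointness, $M_0$ is closed and integration by parts gives
\begin{equation*}
\int_{M_0}(-F^{ij}u_{;ij})\,v\, d\mu = \int_{M_0} F^{ij}u_{;i}v_{;j}\, d\mu + \int_{M_0} F^{ij}{}_{;j}\, u_{;i}\, v\, d\mu.
\end{equation*}
The first integrand is symmetric in $u$ and $v$; the zeroth-order part of $Bu$ is multiplication by a fixed scalar and is trivially symmetric.  Hence $B$ is self-adjoint precisely when the remainder $\int_{M_0} F^{ij}{}_{;j} u_{;i} v\, d\mu$ vanishes for all $u,v$, i.e.\ when $F^{ij}$ is divergence-free on $M_0$.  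This is the defining condition for class $(D)$ (Definition \ref{curvClass}(iii)).

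The main points requiring care are purely bookkeeping: (i) correctly identifying the variational parameter as $\sigma = -u$ in view of the past-directed normal convention, and (ii) contracting with $F^i_j$ against the mixed-tensor form of the evolution equation rather than against \eqref{EvSecFF1b}, since the covariant form would require the additional correction $2\sigma F^{ij}h_i^k h_{kj}$ coming from $\dot{g}_{ij} = 2\sigma h_{ij}$, whose inclusion reproduces the same final formula.
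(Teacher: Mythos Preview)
Your proof is correct. The paper does not actually prove this lemma; the sentence immediately preceding the statement reads ``Then there holds, see \cite[Lemma 3.9]{GerhSurvey}:'', so the paper simply cites the result. Your argument supplies precisely the computation that citation stands in for: interpreting the normal variation $M_s=\{x^0=su\}$ as a flow with speed $\sigma=-u$ along the past-directed normal, reading off $\partial_s h^j_i$ from \eqref{EvSecFF1}, and contracting with $F^i_j$. The self-adjointness via integration by parts and the divergence-free hypothesis is the standard step. Your bookkeeping remarks (the sign from the past-directed normal convention, and the use of the mixed-tensor evolution to absorb the metric variation) are accurate and worth keeping.
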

We remind the definition of stability:
\begin{Definition}
	Let $N$ be Lorentzian, $F$ a curvature operator, and $M \subset N$ a compact, spacelike hypersurface, such that $M$ is admissible. Then $M$ is said to be a (strictly) stable solution to the equation \eqref{FequalsConst}, if the quadratic form
	\begin{equation}
		\int_M{F^{ij}u_i u_j} + \int_M{\{F^{ij}h_{ik}h^k_j + F^{ij}\bar{R}_{\alpha\beta\gamma\delta}\nu^\alpha x_i^\beta \nu^\gamma x_j^\delta\}\} u^2}
	\end{equation}
	is (positive) non-negative for all $u\in C^2(M)$, $u \not\equiv 0$. If $F$ is of class $(D)$, i.e. $F^{ij}$ is divergence free, then this is equivalent to the fact, that the first eigenvalue $\lambda_1$ of the linearization, which is the operator in \eqref{linearizationOperator}, is non-negative.
\end{Definition}
In view of the assumptions on the ambient manifold $N$, in our case there holds
\begin{Proposition}
	The limit hypersurface of the flow is strictly stable.
\end{Proposition}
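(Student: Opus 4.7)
The plan is to establish strict positivity of the quadratic form
\begin{equation*}
	Q(u) = \int_{M_\infty} F^{ij} u_i u_j + \int_{M_\infty} \bigl(F^{ij} h_{ik} h^k_j + F^{ij} \bar{R}_{\alpha\beta\gamma\delta} \nu^\alpha x_i^\beta \nu^\gamma x_j^\delta\bigr)\, u^2
\end{equation*}
for every $u \in C^2(M_\infty)$ with $u \not\equiv 0$, where $M_\infty$ denotes the limit hypersurface produced in Section \ref{Convergence}. In the class $(D)$ setting, Lemma \ref{Linearization} makes $Q$ exactly the Dirichlet form of the linearization $B$, so that strict positivity of $Q$ is equivalent to strict positivity of the first eigenvalue $\lambda_1$ of $B$.

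The first step is to record pointwise non-negativity of each of the three integrands. Since $F$ is strictly monotone on the admissible cone $\Gamma$, the matrix $F^{ij}$ is positive definite on $M_\infty$, making the gradient term non-negative. The zero-order summand $F^{ij}h_{ik}h^k_j$ is $\operatorname{tr}\bigl(F^{ij}\cdot (h_i^k h_{kj})\bigr)$, the trace of the product of a positive-definite endomorphism with a positive-semidefinite one, and is therefore non-negative. Finally, the ambient-curvature term is non-negative by \eqref{TCC} in the mean-curvature case and by \eqref{NPTSC} for a general curvature function.

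The upgrade from non-negativity to strict positivity proceeds via a dichotomy on $u$. If $u$ is non-constant, then $du\neq 0$ on a set of positive measure and the positive definiteness of $F^{ij}$ gives $\int F^{ij}u_iu_j > 0$, whence $Q(u)>0$. If $u$ is a nonzero constant, then $Q(u)=u^2\int_{M_\infty}C$ with
\begin{equation*}
	C = F^{ij}h_{ik}h^k_j + F^{ij}\bar{R}_{\alpha\beta\gamma\delta}\nu^\alpha x_i^\beta \nu^\gamma x_j^\delta \geq 0,
\end{equation*}
and it suffices to show $C\not\equiv 0$. I would verify this class by class: for $F\in(K^*)$ on $\Gamma_+$, the defining inequality $F^{ij}h_{ik}h^k_j\geq \eps_0 FH$ together with $F>0$ and $H>0$ on $\Gamma_+$ gives $C>0$ pointwise; for $F=\si_2$ on $\Gamma_2$, admissibility forces $H>0$, hence $(h_{ij})\neq 0$, and positive definiteness of $F^{ij}$ then forces $F^{ij}h_{ik}h^k_j>0$; for $F=H$, the strict-barrier clause in Assumption \ref{MainAssumption} together with Lemma \ref{FBoundsLemma} is designed to guarantee that the limit value $c_0 = \lim \Phi^{-1}(f_k)$ is non-zero, whence $|A|^2 \geq H^2/n = c_0^2/n > 0$ on $M_\infty$.

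I expect the $F=H$, $\Gamma=\bbr^n$ case to be the main obstacle: vanishing mean curvature is a priori admissible, and one has to extract from the strict-barrier hypothesis a quantitative non-degeneracy of the limit hypersurface --- otherwise $|A|^2$ and the timelike Ricci term could conspire to vanish, and strict stability would fail. In the other two classes the argument is essentially mechanical once the structural inequalities above are in place, and the final conclusion $\lambda_1 > 0$ follows from the self-adjointness of $B$ (Lemma \ref{Linearization} for class $(D)$) and the Rayleigh-quotient characterization of $\lambda_1$.
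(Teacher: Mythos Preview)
Your argument is exactly the one the paper has in mind; the paper's own ``proof'' is a single sentence (``In view of the assumptions on the ambient manifold $N$, in our case there holds\ldots''), and what you have written is a faithful unpacking of that sentence: the gradient term is non-negative because $F^{ij}>0$, the zero-order coefficient is non-negative by \eqref{TCC} resp.\ \eqref{NPTSC}, and strict positivity of $Q$ follows once you know the zero-order coefficient is strictly positive somewhere. Your treatment of the classes $(K^*)$ and $F=\si_2$ (and $F=H$ on $\Gamma_1$) is correct.

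There is one inaccuracy in the $F=H$, $\Gamma=\bbr^n$ case. The strict-barrier clause in Assumption~\ref{MainAssumption} is \emph{not} there to force $c_0\neq 0$; it is there so that Lemma~\ref{FMonotone} can be applied to obtain the $C^0$-estimate (the extra hypothesis in Lemma~\ref{FMonotone} about the principal curvatures of $M_2$ not all vanishing is precisely what a strict barrier guarantees). In particular, nothing in Assumption~\ref{MainAssumption} together with Lemma~\ref{FBoundsLemma} prevents the limit value $c_0$ from being zero when $c_1\le 0\le c_2$. The correct way to close your argument is to invoke the pointwise lower bound already established and used in the convergence section: in the proof of Lemma~\ref{Fexp} the paper records (and relies on) the inequality
\[
\Phi'\{F^{ij}h_{ik}h^k_j+F^{ij}\bar R_{\alpha\beta\gamma\delta}\nu^\alpha x_i^\beta\nu^\gamma x_j^\delta\}\ \ge\ c_0(M_0)>0
\]
uniformly along the flow. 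Passing to the limit and dividing by $\Phi'>0$ gives a strictly positive zero-order coefficient on $M_\infty$, which is precisely the $C>0$ you need for constant test functions. With that substitution your proof is complete and matches the paper's intent.
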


\section{Foliation}
In this section we want to derive some results for regions covered by constant $F$-curvature surfaces, but first we are going to show that under suitable assumptions we can provide such a foliation. To show the existence of a region covered by compact, connected, spacelike constant $F$-curvature hypersurfaces (such a hypersurface will be called CFC-surface from now on) we use however the corresponding curvature flow with the volume preserving term substituted by a constant. The corresponding results can be found in \cite[Theorem 4.2.1,Theorem 5.1.1 and Theorem 4.1.1]{GerhCP}, respectively for $H$, $\si_2$ and $F \in (K^*)$. For the convenience of the reader we state the results from this Theorems:
\begin{Theorem}
\label{ExistenceCFC}
Let $N$, $F$, $\Gamma$ be as in section \ref{Introduction} with $m\geq 2$, $0 < \alpha <1$. If $c > 0$ is a constant and there exists a future and a past curvature barrier for $(F, \Gamma, c)$ of class $C^{m+2,\alpha}$, then there exists a compact, connected, spacelike hypersurface $M$ of class $C^{m+2, \alpha}$ satisfying the equation
\begin{equation}
	F_{|M} = c,
\end{equation}
provided there exists a strictly convex function $\chi \in C^2(\bar{\Om})$, where $\Om$ is the region between the barriers. In the case $F=H$ we do not need the existence of the strictly convex function.
\end{Theorem}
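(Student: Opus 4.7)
The plan is to obtain $M$ as the limit hypersurface of the auxiliary curvature flow
\begin{equation*}
\dot{x} = (\Phi(F) - \Phi(c))\,\nu, \qquad x(0,\cdot) = M_2,
\end{equation*}
started at the future barrier $M_2$, where $\Phi$ is the supplementary function associated to $F$ in Section \ref{Introduction}. This is precisely the flow \eqref{floweq} with the volume-preserving term $f_k$ replaced by the constant $f \equiv \Phi(c)$, so the entire analysis of Sections \ref{C0}--\ref{Convergence} applies with at most minor simplifications. Remark \ref{FBoundsRem}, applied with the constant global term $\Phi(c)$ and the initial inequality $\Phi(c) \leq \Phi(F)_{|M_2}$ coming from the barrier condition, gives $\Phi(c) \leq \Phi(F)(t,\cdot) \leq \max_{M_2}\Phi(F)$ for all $t$, hence $F(t,\cdot) \geq c$ along the flow. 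Combined with the past barrier ($F_{|M_1} \leq c$) and the monotonicity Lemma \ref{FMonotone}, this confines the flow to the region $\bar\Om$ between the two barriers and yields the $C^0$-estimate.

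For the higher-order estimates, Proposition \ref{C1estimates} (gradient) and the curvature estimates of Section 6 carry over unchanged, since those arguments depend only on boundedness of the forcing and, for $F = \si_2$ or $F \in (K^*)$, on the strictly convex function $\chi_\Om$. Because here $f \equiv \Phi(c)$ is constant in space and time, the scalar flow equation is a uniformly parabolic fully nonlinear equation with a concave operator and smooth right-hand side, to which the Krylov-Safonov-Evans theory applies directly; the Bellman-extension detour of Section \ref{higherorder} (Lemmas \ref{FExtension}--\ref{etaDefined}) is not needed. Parabolic Schauder bootstrapping then furnishes uniform $H^{m+2+\alpha,\frac{m+2+\alpha}{2}}$-bounds on $u$ and long-time existence in $C^{m+2,\alpha}$.

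Convergence is driven by the evolution equation of $w := \Phi(F) - \Phi(c) \geq 0$, which by \eqref{EvPhi} (with $f \equiv \Phi(c)$) reads
\begin{equation*}
\dot{w} - \Phi' F^{ij} w_{;ij} + C\, w = 0, \qquad C := \Phi'\bigl\{F^{ij}h_i^k h_{kj} + F^{ij}\bar R_{\alpha\beta\gamma\delta}\nu^\alpha x_i^\beta \nu^\gamma x_j^\delta\bigr\}.
\end{equation*}
The curvature term is non-negative by \eqref{TCC} or \eqref{NPTSC}, and the Weingarten-square term is bounded below by a positive constant thanks to $F \geq c > 0$ together with the uniform $C^2$-bounds (via the $(K^*)$-condition in the general case, directly via $|A|^2$ for $F = H$, and using Lemma \ref{FHineq} for $F = \si_2$). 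Thus $C \geq c_0 > 0$, the maximum principle gives $0 \leq w(t,\cdot) \leq C_1 e^{-c_0 t}$, and the scalar equation \eqref{totalU} converts this into exponential $C^0$-convergence of $u(t,\cdot)$ to some $u_\infty$; interpolation against the uniform $H^{m+2+\alpha}$-bound upgrades this to exponential $C^{m+2}$-convergence exactly as in Section \ref{Convergence}, and elliptic Schauder regularity promotes $u_\infty$ to $C^{m+2,\alpha}(\calS_0)$. The main obstacle, as in the volume-preserving case, is the curvature estimate of Section 6 for $F = \si_2$ and $F \in (K^*)$: the quadratic-in-curvature terms in the evolution of the largest principal curvature can be absorbed only at the cost of the auxiliary function $\chi_\Om$, which explains why the strictly convex function appears as a hypothesis and can be dropped only in the mean-curvature case, where the corresponding estimate is purely algebraic.
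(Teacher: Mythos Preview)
The paper does not give its own proof of this theorem; it simply cites the result from Gerhardt's monograph \cite[Theorems 4.2.1, 5.1.1, 4.1.1]{GerhCP} for the three cases $F=H$, $F=\si_2$ and $F\in(K^*)$. Your proposal reconstructs exactly the argument that underlies those cited theorems: run the flow \eqref{floweq} with the constant force term $f\equiv\Phi(c)$ starting from the future barrier, use the barrier condition and Remark \ref{FBoundsRem} to trap $\Phi(F)$ between $\Phi(c)$ and $\max_{M_2}\Phi(F)$, then run through the $C^0$--$C^2$ estimates of Sections \ref{C0}--6, apply Krylov--Safonov (now legitimate because $f$ is constant), and read off exponential convergence from the evolution equation \eqref{EvPhi}. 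This is the correct approach and matches the cited source.

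Two small points worth tightening. First, the upper $C^0$-bound does not actually require Lemma \ref{FMonotone}: since $\Phi(F)\geq\Phi(c)$ the scalar equation \eqref{totalU} gives $\dot u\leq 0$ directly, so $u(t)\leq u_2$; you only need the monotonicity lemma for the lower bound against $M_1$. Second, your justification that $C\geq c_0>0$ is slightly loose for $F=\si_2$: Lemma \ref{FHineq} gives $H\geq \tfrac{n}{F(1,\dots,1)}F\geq \mathrm{const}\cdot c$, but the step from this to a uniform positive lower bound on $F^{ij}h_{ik}h^k_j$ needs the $C^2$-estimates (the principal curvatures lie in a compact subset of $\Gamma_2$ on which $\sum F_i\kappa_i^2$ is continuous and strictly positive). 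This is what the paper invokes implicitly in Lemma \ref{Fexp}, and your argument is fine once phrased that way.
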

Using this theorem we can show the existence of a foliation in a region enclosed by barriers by following the arguments used to establish a foliation by constant mean curvature surfaces in \cite[Theorem 4.6.3]{GerhCP}.
\begin{Theorem}
\label{FoliationThm}
	Let $N$, $F$, $\Gamma$ be as in Theorem \ref{MainTheorem1} with $m\geq 2$, $0 < \alpha < 1$. Let $c_1 < c_2$ be positive constants and suppose there exists a future curvature barrier for $(F, \Gamma, c_2)$ and a past curvature barrier for $(F, \Gamma, c_1)$, both of class $C^{m+2,\alpha}$, and denote the region between the barriers by $\Omega$. If $F$ is not the mean curvature, then we suppose in addition that there exists a strictly convex function $\chi \in C^2(\bar{\Om})$. Let $M_{c_1}$, $M_{c_2}$ be the CFC-surfaces with $F$-curvature equal to $c_1$ respectively $c_2$. Then the region between $M_{c_1}$ and $M_{c_2}$, which will be denoted by $N_0$, can be foliated by CFC-surfaces of class $C^{m+2,\alpha}$ and there exists a time function $x^0$ of class $C^{m-1}$, such that the slices
\begin{equation}
	M_\tau = \{x^0 = \tau\}, \quad c_1 < \tau < c_2,
\end{equation} 
have $F$-curvature $\tau$. 
\end{Theorem}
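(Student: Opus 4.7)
The plan is to construct the foliating family $\{M_\tau\}_{\tau \in (c_1, c_2)}$ by solving $F=\tau$ between the given barriers for each such $\tau$, to exhibit a strict ordering of the resulting graphs, and from that ordering to derive both the foliation property and a time function of the claimed regularity.

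First I would invoke Theorem \ref{ExistenceCFC} for each $\tau \in (c_1, c_2)$. Since $c_1 < \tau < c_2$, the given past and future barriers $M_{c_1}$ and $M_{c_2}$ for $(F, \Gamma, c_1)$ and $(F, \Gamma, c_2)$ also serve respectively as past and future barriers for $(F, \Gamma, \tau)$, and the strictly convex function $\chi$ (in the case $F \neq H$) is inherited, so Theorem \ref{ExistenceCFC} produces a compact, connected, spacelike CFC-surface $M_\tau$ of class $C^{m+2,\alpha}$ with $F_{|M_\tau} = \tau$. Uniqueness of $M_\tau$ follows from the corollary to Lemma \ref{FMonotone}. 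Writing $M_\tau = \textnormal{graph}\,u_\tau$, a direct application of Lemma \ref{FMonotone} with the strict inequality $\tau_1 < \tau_2 = \min_{M_{\tau_2}} F$ yields the strict monotonicity $u_{\tau_1} < u_{\tau_2}$ whenever $\tau_1 < \tau_2$, so the graphs are strictly ordered and the $M_\tau$ are pairwise disjoint.

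Next I would set up the implicit function theorem for the dependence of $u_\tau$ on $\tau$. At each $M_\tau$ the linearization $B_\tau$ from Lemma \ref{Linearization} is a second order elliptic operator whose associated quadratic form is strictly positive by the stability proposition of the previous section; this forces $B_\tau$ to be Fredholm of index zero with trivial kernel, hence invertible as a map $C^{m+2,\alpha}(M_\tau) \to C^{m,\alpha}(M_\tau)$. Applying the implicit function theorem to the map $(u,\tau) \mapsto F(u) - \tau$ in suitable H\"older spaces yields a $C^{m-1}$ curve $\tau \mapsto u_\tau \in C^{m+2,\alpha}(\calS_0)$, and differentiating the identity $F(u_\tau) = \tau$ gives $B_\tau(\partial_\tau u_\tau) = 1$. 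The maximum principle applied to this equation, combined with the non-negativity of the zero-th order coefficient of $B_\tau$ (guaranteed by \eqref{TCC} or \eqref{NPTSC}), forces $\partial_\tau u_\tau > 0$ pointwise on $\calS_0$, which is the infinitesimal counterpart of the strict ordering established in Step 1.

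Finally I would construct the time function. For $p = (t, y) \in N_0$ in the Gaussian coordinates of \eqref{GKS} one has $u_{c_1}(y) < t < u_{c_2}(y)$; by continuity and strict monotonicity the map $\tau \mapsto u_\tau(y)$ sweeps from $u_{c_1}(y)$ to $u_{c_2}(y)$, so the intermediate value theorem provides a unique $\tau(p) \in (c_1, c_2)$ with $u_{\tau(p)}(y) = t$, and I set $x^0(p) := \tau(p)$. The scalar implicit function theorem applied at the point $(t,y)$, with $\partial_\tau u_\tau(y) > 0$, together with the $C^{m-1}$ regularity of $\tau \mapsto u_\tau \in C^{m+2,\alpha}(\calS_0)$, delivers $x^0 \in C^{m-1}(N_0)$; by construction $M_\tau = \{x^0 = \tau\}$ and $F_{|M_\tau} = \tau$. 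The main technical obstacle I anticipate is the invertibility of $B_\tau$ in the generality of the theorem, where $F$ need not be of class $(D)$ and $B_\tau$ is therefore not self-adjoint, so that strict stability must be combined with Fredholm theory rather than quoted spectrally; the subsequent bookkeeping to land at precisely $C^{m-1}$ regularity of $x^0$ is the other place where care is required.
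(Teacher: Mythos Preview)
Your outline follows the paper's strategy closely, but there is one substantive gap. You claim that the implicit function theorem yields a $C^{m-1}$ curve $\tau \mapsto u_\tau$ with values in $C^{m+2,\alpha}(\calS_0)$; this is too strong. Since $F$ is only of class $C^{m,\alpha}$ and the normal Gaussian coordinates in a tubular neighbourhood of a $C^{m+2,\alpha}$ hypersurface are only $C^{m+1,\alpha}$, the superposition operator $u \mapsto F(u)$ from $C^{s+2,\alpha}$ to $C^{s,\alpha}$ is merely of class $C^{m-s-1}$ (this is the content of Lemma \ref{SuperpositionOp}). A single application of the implicit function theorem therefore gives either a $C^{m-1}$ curve into $C^{2,\alpha}$ (taking $s=0$) or a $C^1$ curve into $C^{m,\alpha}$ (taking $s=m-2$), but never both simultaneously. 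The paper runs the implicit function theorem at every level $s=0,\ldots,m-2$, obtains curves $\hat{u}^s$ that all agree by uniqueness of CFC-surfaces, and then assembles joint $C^{m-1}$ regularity of $(\tau,x) \mapsto u(\tau,x)$ via evaluation maps $\chi^{s+2}(x): u \mapsto u(x)$: for a mixed derivative of order $\beta_0$ in $\tau$ and $|\hat\beta|$ in $x$ with $\beta_0 + |\hat\beta| \leq m-1$, one chooses $s = m-1-\beta_0$ and reads the derivative off from $\hat{u}^s$. This is precisely the ``bookkeeping'' you flag but do not supply, and without it the $C^{m-1}$ regularity of $x^0$ does not follow.

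Two smaller remarks. First, the invertibility of $B_\tau$ is easier than you suggest: the zero-th order coefficient $F^{ij}h_{ik}h^k_j + F^{ij}\bar{R}_{\alpha\beta\gamma\delta}\nu^\alpha x_i^\beta \nu^\gamma x_j^\delta$ is strictly positive by \eqref{TCC} respectively \eqref{NPTSC} together with $F>0$, so the maximum principle gives injectivity directly and elliptic Schauder theory then upgrades this to an isomorphism---no appeal to stability or self-adjointness is needed, and the concern you raise about $F \notin (D)$ disappears. Second, the paper carries out the implicit function argument in normal Gaussian coordinates around a fixed $M_{\tau'}$ rather than over $\calS_0$; this is what puts the linearization into the clean form of Lemma \ref{Linearization} and makes $\dot u > 0$ follow from the same maximum principle argument.
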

First we show the existence of the foliation, this is done in the following
\begin{Lemma}
\label{LemmaFoliation}
	Under the assumptions of Theorem \ref{FoliationThm}, there exist CFC-surfaces $M_\tau$ of class $C^{m+2,\alpha}$ for each $c_1 \leq \tau \leq c_2$ such that
	\begin{equation}
	\label{ExistenceFoliation}
		\bar{N_0} = \bigcup_{c_1 \leq \tau \leq c_2} M_\tau.
	\end{equation}
	Furthermore the $M_\tau$ can be written as graphs over $\calS_0$
	\begin{equation}
		M_\tau = \textnormal{graph } u(\tau, \cdot),
	\end{equation}
	such that $u$ is strictly monotone increasing with respect to $\tau$ and continuous in $[c_1, c_2]\times \calS_0$.
\end{Lemma}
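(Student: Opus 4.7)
The plan is to build the family $\{M_\tau\}_{\tau \in [c_1, c_2]}$ by invoking Theorem \ref{ExistenceCFC} for each $\tau$, to extract strict monotonicity and uniqueness from Lemma \ref{FMonotone} and its corollary, then to establish joint continuity of the graph function via a compactness argument, and finally to close with the intermediate value theorem to obtain the covering \eqref{ExistenceFoliation}.

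For each $\tau \in [c_1, c_2]$, the given future curvature barrier for $(F, \Gamma, c_2)$ is also a future barrier for $(F, \Gamma, \tau)$ since $F \geq c_2 \geq \tau$ on it, and likewise the past barrier for $(F, \Gamma, c_1)$ serves as a past barrier for $(F, \Gamma, \tau)$. Because the strictly convex function $\chi \in C^2(\bar{\Om})$ is already available on the full region between the given barriers, Theorem \ref{ExistenceCFC} produces a compact, connected, spacelike hypersurface $M_\tau$ of class $C^{m+2,\alpha}$ with $F_{|M_\tau} = \tau$, unique by the corollary following Lemma \ref{FMonotone}. Writing $M_\tau = \text{graph}\,u(\tau, \cdot)$ via the graph representation lemma of Section 3 and applying Lemma \ref{FMonotone} to $\tau_1 < \tau_2$, one obtains $u(\tau_1, \cdot) < u(\tau_2, \cdot)$ pointwise. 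In particular each $M_\tau$ lies in $\bar{N_0}$.

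Next I would prove continuity of $u$ on $[c_1, c_2] \times \calS_0$. Each $M_\tau$ lies in the fixed compact region $\bar{\Om}$, hence satisfies a $C^0$ bound independent of $\tau$. The gradient and curvature estimates of Sections 5 and 6, applied to the stationary equation $F = \tau$ (equivalently to \eqref{floweq} with $f \equiv \tau$, for which $M_\tau$ is stationary), together with the higher-order estimates of Section 7, yield $\|u(\tau, \cdot)\|_{C^{m+2,\alpha}(\calS_0)} \leq C$ uniformly in $\tau$. Given $\tau_n \to \tau$, Arzel\`a--Ascoli extracts a subsequence converging in $C^{m+2}(\calS_0)$ to a function whose graph is a CFC-surface of curvature $\tau$; uniqueness forces the limit to be $u(\tau, \cdot)$, so the full sequence converges. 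Combined with the pointwise monotonicity, this gives joint continuity on $[c_1, c_2] \times \calS_0$.

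For $p = (t_0, x) \in \bar{N_0}$, the definition of $N_0$ as the region between $M_{c_1}$ and $M_{c_2}$ yields $u(c_1, x) \leq t_0 \leq u(c_2, x)$. Continuity and strict monotonicity of $\tau \mapsto u(\tau, x)$ give, by the intermediate value theorem, a unique $\tau \in [c_1, c_2]$ with $u(\tau, x) = t_0$, i.e.\ $p \in M_\tau$, establishing \eqref{ExistenceFoliation}. The main obstacle I anticipate is the continuity step: one has to verify that the a priori estimates from Sections 5--7 transfer to the stationary CFC-equation in a form whose constants depend only on $\bar{\Om}$, on $\chi$, and on $[c_1, c_2]$, but not on the particular $\tau$. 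Once this uniformity is secured, the Arzel\`a--Ascoli compactness together with uniqueness of CFC-surfaces of given curvature closes the loop routinely.
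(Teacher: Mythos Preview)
Your argument is correct, and the existence, monotonicity, and covering steps match the paper's route exactly. The continuity step, however, is handled differently. You rely on uniform $C^{m+2,\alpha}$ estimates for the family $\{u(\tau,\cdot)\}$ and Arzel\`a--Ascoli plus uniqueness; the paper instead uses a tubular neighbourhood of $M_\tau$: the level hypersurfaces $\{x^0=\pm\eps\}$ in the associated normal Gaussian system have $F$-curvature strictly above and below $\tau$ (by the monotonicity computation \eqref{FRise} from the proof of Lemma \ref{FMonotone}), hence serve as barriers that trap $M_{\tau_n}$ for $\tau_n$ close to $\tau$, giving $\|u(\tau_n,\cdot)-u(\tau,\cdot)\|_0<\eps$ directly. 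The paper's argument is lighter: it needs only $C^0$ closeness (which is all the lemma asserts) and bypasses entirely the uniformity check you flag as the main obstacle. Your approach, on the other hand, yields more---uniform $C^{m+2,\alpha}$ control over the whole family---at the cost of verifying that the constants in Sections~5--7 depend only on $\bar{\Om}$, $\chi$, and $[c_1,c_2]$; this verification is routine (the stationary solution $M_\tau$ is a fixed point of the flow with $f\equiv\tau$, and the estimates there depend only on $\Om$, bounds for $\Phi$, $\Phi'$ on $[c_1,c_2]$, and $\chi$), but it is extra work not needed for the statement at hand.
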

\begin{proof}
	This follows as in \cite[Lemma 4.6.2]{GerhCP} by using Theorem \ref{ExistenceCFC}, the uniqueness of $CFC$-surfaces and the monotonicity of $F$ for level hypersurfaces in a tubular neighbourhood around a fixed $CFC$-surface.
\end{proof}
Now we can prove Theorem \ref{FoliationThm}:
\begin{proof}
	We have to show that the $F$-curvature parameter can be used as a time function, i.e., $\tau$ should be of class $C^{m-1}$ with non-vanishing gradient.
	
	The regularity of $\tau$ can be shown in an arbitrary coordinate system and it suffices to prove it locally.
	Let $\tau' \in (c_1, c_2)$ and consider a tubular neighbourhood $\mathcal{U} = (-\delta, \delta) \times M_{\tau'}$ with $\delta > 0$ around $M_{\tau'}$ and the corresponding normal gaussian coordinate system of class $C^{m+1, \alpha}$, see \cite[Theorem 12.5.13]{GerhAna}. Then for small $\eps > 0$ we have
	\begin{equation}
		M_\tau \subset \mathcal{U} \quad \forall \, \tau \in (\tau' - \eps, \tau' + \eps), 
	\end{equation}
	see the proof of the Lemma above, they can be written as graphs over $M_{\tau'}$, $M_\tau = $ graph $u(\tau, \cdot)$ and using the implicit function theorem we will show that $u$ is of class $C^{m-1}$:
	
	Let $\delta > 0$ and $s \in \bbn$, $0\leq s \leq m-2$, then we define the open subset 
	\begin{equation}
		C_\delta^s := \{\varphi \in C^{s+2, \alpha}(M_{\tau'}): ||\varphi||_{s+2,\alpha, M_{\tau'}} < \delta\}
	\end{equation} 
	of the Banach space $C^{s+2, \alpha}(M_{\tau'})$, which is equipped with a norm induced by the induced metric of $M_{\tau'}$. If $\varphi \in C_\delta^s$ and $\delta$ is sufficiently small, then graph $\varphi$ represents a compact, connected, spacelike and admissible hypersurface, hence we can define the operator
	\begin{equation}
	\begin{split}
		&G^s: (\tau' - \eps, \tau' + \eps) \times C_\delta^s \rightarrow C^{s, \alpha}(M_{\tau'}), \\
		&G^s(\tau, \varphi) = F(\varphi) - \tau,
	\end{split}
	\end{equation}
	where $F(\varphi)$ denotes the $F$-curvature of graph $\varphi_{|M_{\tau'}}$.
	
	We will show now that $G^s$ is of class $C^{m-s-1}$, since $F$ is of class $C^{m}$. We want to express the operator $F : C_\delta^s \rightarrow C^{s, \alpha}(M_{\tau'})$ as a composition of several mappings, for which we can prove the regularity needed, especially we want to be in a position to use Lemma \ref{SuperpositionOp} below, i.e. we want to localize the operator $F$. From now on let $s$ be fixed.
	
	First of all, let $(\tilde{U}_i, \varphi_i)_{1 \leq i \leq k}$, $k\in \bbn$, be a covering of $M_{\tau'}$ by coordinate charts $\varphi_i: \tilde{U}_i \rightarrow \tilde{\Om}_i$, $\tilde{\Om}_i \subset \bbr^n$ open, such that there exist open, precompact subsets $\Om_i \subset \subset \tilde{\Om}_i$ satisfying $\bigcup_{i=1}^k \varphi_i^{-1}(\Om_i) = M_{\tau'}$. Let $\bar{U}_i := \varphi_i^{-1}(\bar{\Om}_i)$. Then define the linear and continuous, and hence smooth, mapping
	\begin{equation}
	\begin{split}
		\psi: &C^{s+2, \alpha}(M_{\tau'}) \rightarrow \prod_{i=1}^k C^{s+2, \alpha}(\bar{\Om}_i),\\
		& u \mapsto (u\circ \varphi^{-1}_{1|\bar{\Om}_1}, \ldots, u\circ \varphi^{-1}_{k|\bar{\Om}_k}).
	\end{split}
	\end{equation}
	
	Next, for $1 \leq i \leq k$ we define the linear and continuous, hence again smooth, mappings
	\begin{equation}
	\begin{split}
		\gamma^i: &C^{s+2, \alpha}(\bar{\Om}_i) \rightarrow C^{s, \alpha}(\bar{\Om}_i, \bbr \times \bbr^n \times \mathbf{S})\\
		& u \mapsto (u, Du, D^2u),
	\end{split}
	\end{equation}
	where $\mathbf{S}$ denotes the symmetric $n\times n$-matrices and the derivatives are partial derivatives. Denote by $\gamma$ the map with components $\gamma^i$.
	
	We denote by $\eta^i$, $1\leq i \leq k$, the function $\eta$ from Lemma $\ref{etaDefined}$ defined on the corresponding set $\tilde{\Om}_i$, thus it is the function representing the second fundamental form for graphs over $M_{\tau'}$ in the coordinate chart $(\tilde{U}_i, \varphi_i)$. We note that $\eta^i$ is of class $C^{m, \alpha}$, since it can be shown, by going through the proof of the tubular neighbourhood theorem, that the Christoffel-symbols appearing in \eqref{EvHUlin} through the equation $\bar{h}_{ij} = -\bar{\Gamma}^0_{ij}$ are of class $C^{m, \alpha}$ in a tubular neighborhood of a hypersurface of class $C^{m+2, \alpha}$. We restrict $\eta^i$ to the open set $\tilde{\Om}_i \times X_i$, on which the $F$-curvature is well defined (preimage of the open cone of definition) and define
	\begin{equation}
	\begin{split}
		F^i: &\bar{\Om}_i \times X_i \rightarrow \bbr,\\
		& (x, z, p, r) \mapsto F(\eta^i(x, z, p, r)).
	\end{split}
	\end{equation}
	Let $B^{s+2}_i := C^{s+2, \alpha}(\bar{\Om}_i, X_i)$ and denote by $B^{s+2} \subset \prod_{i=1}^k C^{s+2, \alpha}(\bar{\Om}_i, \bbr \times \bbr^n \times \bfS)$ the open subset with components belonging to $B^{s+2}_i$. Now we can apply Lemma \ref{SuperpositionOp} to obtain that that the induced maps $\tilde{F}^i: B^{s+2}_i \rightarrow C^{s, \alpha}(\bar{\Om}_i)$ are of class $C^{m-s-1}$. It remains to put these maps together to obtain the $F$-curvature of graph $u$ defined on $M_{\tau'}$:
	
	Let $(\zeta_i)_{1\leq i \leq k}$ be a partition of unity subordinate to the covering $(U_i)_{1\leq i\leq k}$, and define
	\begin{equation}
	\begin{split}
		\Phi: &B^{s+2} \rightarrow C^{s, \alpha}(M_{\tau'}),\\
		&(u_1, \ldots, u_k) \mapsto \sum_{i=1}^k{ \tilde{F}^i (u_i \circ \varphi_{i |U_i}) \cdot \zeta_i}.
	\end{split}
	\end{equation}
	As can be seen by an argumentation as in the previous steps, this map is of class $C^{m-s-1}$ and $F$ as a map from $C_\delta^s$ to $C^{s, \alpha}(M_{\tau'})$ equals $\Phi \circ \gamma \circ \psi$, hence it is also of class $C^{m-s-1}$, completing this part of the proof.
	
	Now Lemma \ref{Linearization} implies 
	\begin{equation}
		D_2G^s(\tau', 0) \varphi = - F^{ij} \varphi_{ij} + \{F^l_k h_l^m h_m^k + F_k^l \bar{R}_{\alpha\beta\gamma\delta}\nu^\alpha x_l^\beta \nu^\gamma x_m^\delta g^{mk}\} \varphi,
	\end{equation}
	where the geometric quantities appearing in this equation correspond to $M_{\tau'}$. Hence the elliptic Schauder theory implies that the operator
	\begin{equation}
		D_2G^s(\tau', 0): C^{s+2, \alpha}(M_{\tau'}) \rightarrow C^{s, \alpha}(M_{\tau'})
	\end{equation}
	is an isomorphism and the implicit function theorem implies the existence of $\hat{u}^s \in C^{m-s-1}((\tau'-\gamma^s, \tau' + \gamma^s), C^{s+2, \alpha}(M_{\tau'}))$ for some small $\gamma^s > 0$, such that $G^s(\tau, \hat{u}^s(\tau, \cdot))= 0$. Let $\gamma := \underset{0\leq s \leq m-2}{\min}\, \gamma^s$. 
	
	We will show the regularity of $u$ in a coordinate chart $(\Om, \phi)$ of $M_{\tau'}$, where $\phi$ is of class $C^{m+2, \alpha}$, $\Om \subset \subset M_{\tau'}$ is a domain and let $\Om' \subset \phi(\Om)$ be a domain with a smooth boundary. Then we can define  
	\begin{equation}
	\begin{split}
		\bar{u}^s: &(\tau' - \gamma, \tau' + \gamma) \rightarrow C^{s+2, \alpha}(\bar{\Om}'),\\
		&t \mapsto \hat{u}^s(t) \circ (\phi^{-1})_{|\bar{\Om}'},
	\end{split}
	\end{equation}
	which is then again of class $C^{m-s-1}$.
	
	Furthermore for $0 \leq s \leq m-2$ we define the supplementary function
	\begin{equation}
	\begin{split}
		\chi^{s+2}: &\bar{\Om}' \rightarrow L(C^{s+2, \alpha}(\bar{\Om}'), \bbr),\\
		&x \mapsto \left( \chi^{s+2}(x): u\mapsto u(x) \right).
	\end{split}
	\end{equation}
	Then $\chi^{s+2}$ is of class $C^{s+2, \alpha}$, where for a $n$-dimensional multi-index $\beta$ with $|\beta|\leq s+2$ there holds $D^\beta \chi = \eta^{s+2, \beta}$, which is defined as 
	\begin{equation}
	\begin{split}
		\eta^{s+2, \beta}:&\bar{\Om}' \rightarrow L(C^{s+2, \alpha}(\bar{\Om}'), \bbr),\\
		&x \mapsto \left( \eta^{s+2, \beta}(x): u \mapsto D^\beta u(x)\right).
  \end{split}
	\end{equation}
	Finally, we consider the function
	\begin{equation}
	\begin{split}
		u: &(\tau'-\gamma, \tau'+\gamma) \times \bar{\Om}' \rightarrow \bbr,\\
		&(\tau, x) \mapsto \chi^{s+2}(x)\bar{u}^s(\tau),
	\end{split}
	\end{equation}
	which is well defined independently of $s$ in view of the uniqueness of CFC-surfaces. Now let $\beta$ be an $n+1$-dimensional multi-index with $|\beta|\leq m-1$ and denote by $\hat{\beta}$ the last $n$ components of $\beta$. To be precise, at this moment we should also include an order of the elements of $\beta$, which would correspond to the order of the partial derivatives to be taken, however the proof below still holds unchanged for ordered multi-indices. If $\beta_1 > 0$ then define $s:= m - 1 - \beta_1$ and for $\beta_1 =0$ define $s := m-2$.
	Then $D^\beta u(t, x)$ exists and using the chain rule we see that $D^\beta u(t, x) = D^{\hat{\beta}}\chi^{s+2}(x) \circ D^{\beta_1}\bar{u}^s(t)$ and hence is continuous.
	We conclude that $u \in C^{m-1}((\tau'-\gamma, \tau'+\gamma) \times M_{\tau'})$.
		 	
	Next we show that $\tau$ has a non-vanishing gradient:
	Again in a tubular neighbourhood of $M_{\tau'}$ we define the coordinate transformation
	\begin{equation}
		\Phi(\tau, x^i) = (u(\tau, x^i), x^i).
	\end{equation} 
	Then there holds
	\begin{equation}
		\det D\Phi = \frac{\partial u}{\partial \tau} = \dot{u}.
	\end{equation}
	If we can show that $\dot{u}$ is strictly positive then $\Phi$ is a diffeomorphism of class $C^{m-1}$ and hence $\tau$ has non-vanishing gradient. Now we observe that the CFC-surfaces in $\mathcal{U}$ satisfy the equation
	\begin{equation}
		F(u(\tau, \cdot)) = \tau,
	\end{equation}
	where the left hand-side can be expressed via \eqref{EvHU}. Differentiating both sides with respect to $\tau$, evaluating for $\tau = \tau'$ and taking into account that $u(\tau', \cdot) = 0$ in this coordinate system, we obtain the equation
	\begin{equation}
		-F^{ij}\dot{u}_{ij} + \{F^l_k h_l^m h_m^k + F_k^l \bar{R}_{\alpha\beta\gamma\delta}\nu^\alpha x_l^\beta \nu^\gamma x_m^\delta g^{mk}\} \dot{u} = 1.
	\end{equation}
	Hence in a point, where $\dot{u}$ attains its minimum, we can infer
	\begin{equation}
		\{F^l_k h_l^m h_m^k + F_k^l \bar{R}_{\alpha\beta\gamma\delta}\nu^\alpha x_l^\beta \nu^\gamma x_m^\delta g^{mk}\} \dot{u} \geq 1.
	\end{equation}
	Since the expression in the brackets is always positive, for this fact we refer again to the proof of Lemma \ref{FMonotone}, we conclude that $\dot{u}$ is strictly positive, completing the proof of the Theorem.
\end{proof}

\begin{Bemerkung}
	By looking at tubular neighbourhoods around $M_{c_1}$ and $M_{c_2}$ we obtain new barriers as in the proof of Lemma \ref{LemmaFoliation}. Hence the maximal region which can be foliated by CFC-surfaces of class $C^{m+2, \alpha}$ with positive $F$-curvature is an open subset of $N$ containing $\bar{N}_0$ and the time function in Theorem \ref{FoliationThm} exists on an open interval $I = (a_1, a_2)$ with $a_1 \geq 0$ containing $[c_1, c_2]$.
\end{Bemerkung}

We deliver the Lemma, which has been used in the above Theorem. 
\begin{Lemma}
\label{SuperpositionOp}
Let $K = \bar{\Om}$ be a compact subset of $\bbr^n$, where $\Om$ is open, $E, F$ be Banach spaces, $X \subset E$ an open set and $m \in \bbn$, $m \geq 2$. 
Let $G \in C^m(K\times X, F)$, then the map
\begin{equation}
\begin{split}
\tilde{G}: & C^{k, \beta}(K, X) \rightarrow C^{k,\beta}(K, F),\\
&u(\cdot) \mapsto G(\cdot, u(\cdot)),
\end{split}
\end{equation}
is of class $C^{m-k-1}$, where $k \in \bbn$, $0 \leq k < m$, $0 < \beta \leq 1$.
\end{Lemma}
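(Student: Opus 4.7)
The plan is to prove the statement by induction on $j := m - k - 1 \geq 0$, using the fact that Fr\'echet differentiating a superposition operator produces (after a pointwise multiplication) the superposition operator for the partial derivative $D_2 G$, which has one less degree of smoothness but acts on the same H\"older space, so the inductive hypothesis can be applied.

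\textbf{Well-definedness and boundedness.} First I would show that $\tilde G$ maps $C^{k,\beta}(K,X)$ into $C^{k,\beta}(K,F)$, with $\|\tilde G(u)\|_{k,\beta}$ controlled by a continuous function of $\|u\|_{k,\beta}$. For $k = 0$ this is a direct H\"older estimate:
\begin{equation*}
\|G(x,u(x)) - G(y,u(y))\|_F \leq \mathrm{Lip}(G; K\times u(K))\bigl(|x-y| + |u(x)-u(y)|\bigr) \leq c\,(1 + \|u\|_{\beta}^\beta)|x-y|^\beta,
\end{equation*}
using that $u(K)$ is compact in $X$ and $G \in C^m$ with $m\geq 2$. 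For $k\geq 1$, Fa\`a di Bruno expresses $D^\alpha(G(\cdot,u(\cdot)))$ as a polynomial in the partial derivatives of $G$ (up to order $|\alpha|\leq k < m$) times products of partial derivatives of $u$ (each of order $\leq k$); the H\"older-$\beta$ norm of this expression is estimated by iterated application of the algebra property of $C^{k,\beta}$ and composition with the $C^{m-k}$-function $D^\gamma G$ restricted to $K\times u(K)$.

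\textbf{Base case $j=0$.} For $m = k+1$, I need that $\tilde G: C^{k,\beta}(K,X) \to C^{k,\beta}(K,F)$ is merely continuous. Given $u_n \to u$ in $C^{k,\beta}$, the Fa\`a di Bruno expansion of $D^\alpha \tilde G(u_n) - D^\alpha \tilde G(u)$ for $|\alpha|\leq k$ is a finite sum of terms each containing either the difference of two partial derivatives of $u_n$ and $u$ (which tends to zero in $C^{0,\beta}$), or a difference $D^\gamma G(\cdot,u_n(\cdot)) - D^\gamma G(\cdot,u(\cdot))$ with $|\gamma|\leq k$; the latter is continuous in the $C^0$ topology (since $D^\gamma G$ is uniformly continuous on the precompact set $K\times \bigcup_n u_n(K)$), and an interpolation between the uniformly bounded $C^{0,1}$ norm and the $C^0$ norm converts this into convergence in $C^{0,\beta}$.

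\textbf{Inductive step.} For $j \geq 1$, I would show that the candidate Fr\'echet derivative at $u \in C^{k,\beta}(K,X)$ is the linear map
\begin{equation*}
L_u:\; h \mapsto D_2 G(\cdot, u(\cdot))\,h(\cdot),\qquad h \in C^{k,\beta}(K,E).
\end{equation*}
Fr\'echet differentiability is obtained from Taylor's formula
\begin{equation*}
G(x, u(x)+h(x)) - G(x,u(x)) - D_2 G(x,u(x))\,h(x) = \int_0^1 (1-t)\, D_2^2 G(x,u(x)+th(x))(h(x),h(x))\,dt,
\end{equation*}
the integrand of which, as a function of $x$, lies in $C^{k,\beta}(K,F)$ and satisfies a $C^{k,\beta}$-bound quadratic in $\|h\|_{k,\beta}$ by the well-definedness step applied to $D_2^2 G \in C^{m-2}$. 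The key observation is then that $D\tilde G$ factors as
\begin{equation*}
D\tilde G(u) \;=\; M\!\left(\widetilde{D_2 G}(u)\right),
\end{equation*}
where $\widetilde{D_2 G}: C^{k,\beta}(K,X) \to C^{k,\beta}(K, L(E,F))$ is the superposition operator associated with $D_2 G \in C^{m-1}(K\times X, L(E,F))$, and $M: C^{k,\beta}(K,L(E,F)) \to L(C^{k,\beta}(K,E), C^{k,\beta}(K,F))$ is the pointwise multiplication $M(A)(h)(x) = A(x)h(x)$. The map $M$ is linear and bounded (hence smooth) by the H\"older algebra estimate, and by the inductive hypothesis $\widetilde{D_2 G}$ is of class $C^{(m-1)-k-1} = C^{j-1}$. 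Composition yields $D\tilde G \in C^{j-1}$, hence $\tilde G \in C^j$, completing the induction.

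\textbf{Main obstacle.} The technical crux is the H\"older algebra and chain-rule machinery used silently above: one must verify, with explicit constants depending only on $\|u\|_{k,\beta}$ and $\mathrm{dist}(u(K),\partial X)$, that products, compositions and integrals of $C^{k,\beta}$-valued functions yield $C^{k,\beta}$-norm bounds of the expected polynomial form, and that the Taylor remainder estimate above indeed gives $o(\|h\|_{k,\beta})$ and not just $O(\|h\|_{k,\beta}^2)$ in a weaker norm. Once these pointwise-to-H\"older estimates are in place, the inductive reduction to $D_2 G$ produces the asserted loss of exactly one derivative per differentiation step, matching the $C^{m-k-1}$ conclusion.
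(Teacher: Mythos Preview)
Your approach is correct and essentially coincides with the argument the paper invokes (the paper's proof is a one-line citation to Amann's inductive scheme together with the H\"older-space continuity result of Chiappinelli--Nugari; the commented-out expansion in the source follows exactly your pattern of reducing $D\tilde G$ to the superposition operator of $D_2G$ and inducting).

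One small refinement is worth making explicit. In the inductive step you establish Fr\'echet differentiability via the second-order Taylor remainder and invoke the well-definedness step for $D_2^2G\in C^{m-2}$. At the borderline $j=1$ (i.e.\ $m=k+2$) this is one derivative short: $D_2^2G$ is only $C^k$, so the composition $x\mapsto D_2^2G(x,u(x)+th(x))$ need not land in $C^{k,\beta}$, and the product with $h\otimes h$ does not repair the missing H\"older seminorm at top order. This is precisely the obstacle you flag in your last paragraph. The standard fix---and what the cited references do---is to use instead the first-order mean value form
\[
\tilde G(u+h)-\tilde G(u)-L_uh=\int_0^1\bigl[D_2G(\cdot,u(\cdot)+\tau h(\cdot))-D_2G(\cdot,u(\cdot))\bigr]h(\cdot)\,d\tau,
\]
and bound it by $\|h\|_{k,\beta}\sup_\tau\|\widetilde{D_2G}(u+\tau h)-\widetilde{D_2G}(u)\|_{k,\beta}$; this is $o(\|h\|_{k,\beta})$ by the continuity of $\widetilde{D_2G}$, which is your base case applied to $D_2G\in C^{m-1}$ (here $m-1\geq k+1$, so the well-definedness step applies). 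With this adjustment your sketch goes through uniformly in $j\geq 1$.
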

\begin{proof}
This follows from the proof in \cite[Theorem VII.6.4]{Amann} by using the continuity result from \cite[Theorem 2.1]{ChiapNugari}. 

\end{proof}

Next, we derive some results concerning the area and volume of hypersurfaces between CFC-surfaces.
\begin{Bemerkung}
Now suppose $\mathcal{C} \subset N$ is a cylinder, which can be foliated by CFC-surfaces $M_\tau$, i.e.
\begin{equation}
	\mathcal{C} = \underset{\tau \in J}{\bigcup}M_\tau, \quad J= [c_1, c_2),
\end{equation}
where $0 < c_1 < c_2 \leq b$, then the functions
\begin{align}
	&\tau \mapsto |M_\tau|,\\
	&\tau \mapsto V_{n+1}(M_\tau),
\end{align}
where the functions are defined on $J$, are strictly monotone decreasing and increasing respectively, where the latter follows from the monotonicity of CFC-surfaces. For the former let $M_{\tau_1}$, $M_{\tau_2} \subset \mathcal{C}$ be two CFC-surfaces, $\tau_2>\tau_1$. Then we choose the time-function from Theorem \ref{FoliationThm} and note that in this coordinate system the area is strictly decreasing in view of
\begin{equation}
\label{areaDecreasingEq}
	\frac{d}{dt} \sqrt{\det (\bar{g}_{ij}(t, \cdot))} = - \bar{H}\sqrt{\det (\bar{g}_{ij})} < 0,
\end{equation}
where we used Lemma \ref{FHineq}. Hence the statement. 
\end{Bemerkung}

Now we can derive the following consequence of Theorem \ref{MainTheorem1}
\begin{Proposition}
Let $N$, $F$ be as in Theorem \ref{MainTheorem1} with $m \geq 2$, $0 < \alpha < 1$. Let $M = $ graph $u$ be a compact, spacelike, connected, admissible hypersurface in $N$ of class $C^{4, \alpha}$ satisfying for some $0 < c_1 < c_2 < \infty$
\begin{equation}
	c_1 \leq	F_{|M} \leq c_2, 
\end{equation}
and we assume there exist two CFC-surfaces $M_{c_1}$ and $M_{c_2}$ of class $C^{4,\alpha}$ with $F$-curvature $c_1$ respectively $c_2$.
Then there holds
\begin{equation}
V_{n+1}(M_{c_1}) \leq V_{n+1}(M) \leq V_{n+1}(M_{c_2}),
\end{equation}
and
\begin{equation}
|M_{c_2}| \leq |M| \leq |M_{c_1}|.
\end{equation} 
\end{Proposition}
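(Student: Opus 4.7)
The plan is to apply Theorem \ref{MainTheorem1} to the volume preserving flow starting from $M$, once with $k=0$ for the $V_{n+1}$-estimate and once with $k=1$ for the area estimate, and then to compare the limit hypersurface with the CFC-leaves of the foliation provided by Theorem \ref{FoliationThm}.

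First I would check that Assumption \ref{MainAssumption} is satisfied when $M_0:=M$. Put $\tilde c_1 := \min_M F$ and $\tilde c_2 := \max_M F$; by hypothesis $c_1 \le \tilde c_1 \le \tilde c_2 \le c_2$, so $M_{c_1}$ and $M_{c_2}$ serve as past and future curvature barriers for $(F,\Gamma,\tilde c_1)$ and $(F,\Gamma,\tilde c_2)$ respectively, and in the non-mean-curvature case the strictly convex function $\chi$ is already at hand on the region $\bar\Omega$ between $M_{c_1}$ and $M_{c_2}$ (this is the same assumption under which Theorem \ref{FoliationThm} yields the CFC-foliation $\{M_\tau\}_{c_1\le\tau\le c_2}$). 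Lemma \ref{FMonotone}, applied twice, moreover forces $M$ to lie in this region.

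Next, I would run the $k=0$ flow starting at $M$: by Theorem \ref{MainTheorem1} it exists for all times, preserves $V_{n+1}$, and converges exponentially in $C^{m+2}$ to a compact spacelike hypersurface $M_\infty^{(0)}$ with $F\equiv c_0^{(0)}$. Because the flow is trapped between the barriers (the Proposition just after Lemma \ref{FMonotone}), $c_0^{(0)}\in[c_1,c_2]$; by the uniqueness Corollary after Lemma \ref{FMonotone}, $M_\infty^{(0)}$ coincides with the foliation leaf $M_{c_0^{(0)}}$. Hence $V_{n+1}(M)=V_{n+1}(M_{c_0^{(0)}})$, and the strict monotonicity $\tau\mapsto V_{n+1}(M_\tau)$ along the foliation, stated in the Remark preceding the Proposition, gives
\begin{equation*}
V_{n+1}(M_{c_1})\le V_{n+1}(M_{c_0^{(0)}})=V_{n+1}(M)\le V_{n+1}(M_{c_2}).
\end{equation*}

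Running the flow again with $k=1$, which preserves $|\cdot|$, the same chain of arguments produces a limit leaf $M_{c_0^{(1)}}$ in the foliation with $|M|=|M_{c_0^{(1)}}|$; the strict monotonicity $\tau\mapsto|M_\tau|$ (decreasing, via Lemma \ref{FHineq} as explained in the same Remark) then yields $|M_{c_2}|\le|M|\le|M_{c_1}|$. The only real obstacle is the bookkeeping in the first paragraph, i.e.\ verifying that $M$ together with the two given CFC-surfaces meets the full hypothesis list of Theorem \ref{MainTheorem1}; once this is in place, the argument is a clean assembly of the trapping principle, the uniqueness of CFC-surfaces, and the monotonicity along the foliation.
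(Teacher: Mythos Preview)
Your proposal is correct and is exactly the argument the paper has in mind: the paper's proof is the single sentence ``This follows from Theorem \ref{MainTheorem1} and the remark above,'' and you have simply unpacked that sentence---run the $k=0$ and $k=1$ flows from $M$, identify the limit with a leaf of the CFC-foliation via uniqueness, and invoke the monotonicity of $\tau\mapsto V_{n+1}(M_\tau)$ and $\tau\mapsto|M_\tau|$ from the preceding Remark. The bookkeeping you flag (barriers, strictly convex $\chi$, trapping of $M$ between $M_{c_1}$ and $M_{c_2}$) is indeed the only thing to verify, and you have done so correctly.
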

\begin{proof}
	This follows from Theorem \ref{MainTheorem1} and the remark above.
\end{proof}

In a certain sense we can prove the converse of the above:
\begin{Proposition}
Let $N$, $F$ be as in Theorem \ref{MainTheorem1} with $m\geq 3$, $0 < \alpha < 1$. Let $M_\tau = $ graph $u_\tau$ be a CFC-surface of class $C^{m+2,\alpha}$ with positive $F$-curvature $\tau > 0$. Then $M_\tau$ is the limit hypersurface of a non-trivial curvature flow, which preserves $|M_\tau|$ or $V_{n+1}(M_\tau)$. 
\end{Proposition}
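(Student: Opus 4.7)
The plan is to construct a non-trivial initial hypersurface $M_0$ close to $M_\tau$ having exactly the same enclosed volume (respectively area) as $M_\tau$, apply the convergence theorem \ref{MainTheorem1}, and then use preservation of the quantity together with uniqueness of CFC-surfaces to force the limit to coincide with $M_\tau$. The crucial ingredient is a local foliation of a neighbourhood of $M_\tau$ by CFC-surfaces, which both supplies barriers for the flow and parametrizes the perturbation.

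To set up the local foliation, I work in the normal Gaussian coordinate system of $M_\tau$. Applying equation \eqref{FRise} on $M_\tau$, the principal curvatures cannot all vanish (since $F_{|M_\tau} = \tau > 0$ and $F$ vanishes on $\partial \Gamma$), so $F^{ij} h_{ik} h^k_j > 0$; combined with the non-negativity of the Ricci term coming from \eqref{NPTSC} this yields $\dot{\bar F}|_{t=0} > 0$ uniformly on $M_\tau$. Consequently, for sufficiently small $\delta > 0$ there exists $\eta > 0$ such that $\max_{M(-\delta)} \bar F < \tau - \eta < \tau + \eta < \min_{M(\delta)} \bar F$, providing past and future curvature barriers for $(F, \Gamma, \tau \mp \eta)$. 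Theorem \ref{FoliationThm} then produces a foliation $\{M_s\}_{s \in [\tau - \eta, \tau + \eta]}$ of a neighbourhood $U$ of $M_\tau$ by CFC-surfaces of class $C^{m+2, \alpha}$, each writable as a graph $u_s$ over $\calS_0$ with $C^{m-1}$ dependence on $s$. The strictly convex function required when $F \neq H$ is supplied by Lemma \ref{ExistenceStrictlyConvex} on a sufficiently thin tube, using that $M_\tau$ is strictly convex when $F \in (K^*)$; in the $\sigma_2$ case this input is inherited from the standing hypotheses.

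Now fix a smooth non-constant $\phi : \calS_0 \to \bbr$ and define $u_{\varepsilon, s}(x) := u_s(x) + \varepsilon \phi(x)$, yielding admissible spacelike graphs for $(\varepsilon, s)$ near $(0, 0)$. Let $Q(\varepsilon, s)$ denote either $V_{n+1}(\textnormal{graph } u_{\varepsilon, s})$ or $|\textnormal{graph } u_{\varepsilon, s}|$; along $\varepsilon = 0$ we have $Q(0, s) = V_{n+1}(M_{\tau+s})$ or $|M_{\tau+s}|$, both strictly monotone in $s$ by the remark preceding this Proposition (for the area the strict monotonicity uses $H > 0$, i.e.\ Lemma \ref{FHineq} together with $F = \tau + s > 0$). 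Hence $\partial_s Q(0, 0) \neq 0$ and the implicit function theorem yields $s(\varepsilon)$ with $s(0) = 0$ and $Q(\varepsilon, s(\varepsilon)) = Q(M_\tau)$; choosing $\varepsilon_0 \neq 0$ small, set $M_0 := \textnormal{graph } u_{\varepsilon_0, s(\varepsilon_0)} \neq M_\tau$. For $\varepsilon_0$ small enough, $\min_{M_0} F$ and $\max_{M_0} F$ both lie in $(\tau - \eta, \tau + \eta)$, so $M_{\tau - \eta}$ and $M_{\tau + \eta}$ satisfy Assumption \ref{MainAssumption}. Theorem \ref{MainTheorem1} (with $f = f_0$ for volume, $f = f_1$ for area) then produces a global flow starting at $M_0$ converging exponentially in $C^{m+2}$ to a CFC-surface $M_\infty$ between the barriers; $M_\infty$ lies in $U$ and, by uniqueness of CFC-surfaces (Corollary after Lemma \ref{FMonotone}), coincides with some $M_{s'}$. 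Preservation of $Q$ combined with the strict monotonicity of $s \mapsto Q(M_s)$ forces $s' = \tau$, so $M_\infty = M_\tau$, and non-triviality follows from $M_0 \neq M_\tau$. The main technical obstacle is the first step: securing the local CFC-foliation with a strictly convex function on $U$, particularly for $F = \sigma_2$ where $M_\tau$ need not be strictly convex; once the foliation is in hand the remaining argument is essentially mechanical.
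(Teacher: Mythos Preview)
Your argument is correct and reaches the same conclusion, but the paper's proof is structured quite differently and avoids the machinery you invoke. Rather than building the local CFC-foliation first and then running a two-parameter implicit function theorem in $(\varepsilon,s)$, the paper works directly in the tubular neighbourhood of $M_\tau$ and applies the implicit function theorem to the map $(u,s)\mapsto V_{n+1}(\textnormal{graph }u)-s$ (respectively $|\textnormal{graph }u|-s-|M_\tau|$) on $C_\delta\times\bbr$, where $C_\delta$ is a small ball in $C^{m+1,\alpha}(M_\tau)$. This yields a continuous function $\varphi:C_\delta\to\bbr$ whose zero set consists of graphs with the prescribed volume (area). Monotonicity of volume and area along the \emph{level hypersurfaces} $\{x^0=\textnormal{const}\}$ of the tubular neighbourhood---not along a CFC-foliation---shows that $\varphi$ takes both signs near $0$; since $C_\delta\setminus\{0\}$ is connected (an infinite-dimensional sphere), $\varphi^{-1}(0)\setminus\{0\}$ must be nonempty. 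This produces the desired non-trivial $M_0$ without ever appealing to Theorem~\ref{FoliationThm}.

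What this buys the paper is that the construction of $M_0$ needs no CFC-foliation and no separate discussion of barriers or strictly convex functions at that stage; those enter only when Theorem~\ref{MainTheorem1} is finally invoked, and the barriers are then supplied (as in your argument) by the level hypersurfaces of the tube via the $F$-monotonicity in \eqref{FRise}. Your route, by contrast, is more constructive---you exhibit $M_0$ explicitly as $\textnormal{graph}(u_{s(\varepsilon_0)}+\varepsilon_0\phi)$---and your final identification $M_\infty=M_\tau$ via monotonicity of $Q$ along the foliation is spelled out more carefully than in the paper, which leaves that step implicit. The ``technical obstacle'' you flag concerning the strictly convex function for $F=\sigma_2$ is not peculiar to your approach: the paper faces exactly the same issue when it applies Theorem~\ref{MainTheorem1}, and handles it the same way you do, by inheriting it from the standing hypotheses. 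In short, both proofs are sound; the paper's is shorter because it replaces your two-parameter calculus argument by a one-line connectedness observation in function space.
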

\begin{proof}
We consider a tubular neighbourhood $\mathcal{U} = (-\delta, \delta) \times M_{\tau}$ with $\delta > 0$ around $M_{\tau}$ and work in the corresponding normal gaussian coordinate system of class $C^{m+1, \alpha}$. Furthermore, all hypersurfaces below will be considered as graphs over $M_\tau$, hence $M_\tau = $ graph $0$. For $\delta > 0$ let 
\begin{equation}
	C_\delta := \{u \in C^{m+1,\alpha}(M_\tau): ||u||_{m+1, \alpha, M_\tau} < \delta\},
	\end{equation}
an open subset of $C^{m+1,\alpha}(M_\tau)$. Again we choose $\delta> 0$ small enough, such that, for $u \in C_\delta$, graph $u$ represents a compact, connected, spacelike and admissible hypersurface contained in $\mathcal{U}$. Furthermore we can use $M_\tau$ without loss of generality as the reference hypersurface in the definition of the volume, i.e. $V_{n+1}(M_\tau) = 0$. Now define the following functionals:
\begin{equation}
\begin{split}
	V: & C_\delta \times \bbr \rightarrow \bbr,\\
	&(u, s) \mapsto V_{n+1}(\textnormal{graph } u) - s
\end{split}
\end{equation}
and
\begin{equation}
\begin{split}
	A: &C_\delta \times \bbr \rightarrow \bbr,\\
	&(u, s) \mapsto |\textnormal{graph } u| - s - |\textnormal{graph } 0|.
\end{split}
\end{equation}
Both functionals are continuously differentiable, $V$ as well as $A$ vanish at $(0, 0)$ and they satisfy $D_2V = -1$ and $D_2A = -1$. Hence we can apply the implicit function theorem to obtain an open (and, without loss of generality, connected) neighbourhood $U \subset C_\delta$ of $0$ and a function $\varphi \in C^1(U, \bbr)$, such that
\begin{equation}
	V(u, \varphi(u)) = 0 \quad \forall \, u\in U,
\end{equation} 
respectively
\begin{equation}
	A(u, \varphi(u)) = 0 \quad \forall \, u\in U.
\end{equation}
Since the volume and the area are strictly monotonically increasing and decreasing respectively in the tubular neighbourhood, see \eqref{areaDecreasingEq}, in every arbitrarily small neighbourhood of $0$ in $C_\delta$ there are compact, spacelike, connected, admissible hypersurfaces with bigger and smaller volume respectively area than $M_\tau$, hence $\varphi^{-1}(\bbr_+)$ and $\varphi^{-1}(\bbr_-)$ are both nonempty. 

We conclude that the set $B := \varphi^{-1}(0) - \{0\}$ is nonempty, for otherwise the connected set $\hat{U}:= U - \{0\}$ is identical to $\varphi^{-1}(\bbr_+) \, \dot{\cup} \, \varphi^{-1}(\bbr_-)$, which is a contradiction to the connectedness of $\hat{U}$, since the latter two sets are open in view of the continuity of $\varphi$. 

Hence we obtain a starting hypersurface of class $C^{m+1,\alpha}$, which, when $\delta$ was chosen small enough, fulfills also the barrier requirements, see the proof of Lemma \ref{LemmaFoliation}. Now, since $m\geq 3$, we can apply Theorem \ref{MainTheorem1} to complete the proof.
\end{proof}

\section{Short time existence}
\label{shorttime}
Short time existence for the flow without a global, time-dependent force term is well known, see for example \cite[Chapter 2.5]{GerhCP}. The method employed there is to show first short time existence via the inverse function theorem for a scalar evolution equation (evolution of the graphs) and then using existence results for ordinary differential equations one obtains the desired short time existence for the flow.

We will use a modification of the proof from \cite{GerhCP} and a fixed point argument as in \cite{McCoyMixedArea} to prove the short time existence for the flow with a global force term. 

\begin{Theorem}
	The equation \eqref{floweq} has a solution of class $H^{4+\alpha, \frac{4+\alpha}{2}}(\bar{Q}_\eps)$, where $Q_\eps = [0, \eps) \times M$ and $\eps$ is a small constant.
\end{Theorem}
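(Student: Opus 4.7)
The plan is to reduce \eqref{floweq} to a scalar initial value problem for the graph function $u$, treat the nonlocal term $f_k$ by a fixed point argument on a short time interval, and then recover the full embedding $x(t,\xi)$ by ODE integration as in \cite[Chapter 2.5]{GerhCP}.

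First, I would fix a small $T > 0$ (to be chosen) and work in the Banach space $X := C^0([0,T])$ with the supremum norm. For each $\tilde f \in X$, consider the scalar quasilinear parabolic problem
\begin{equation*}
\frac{\partial u}{\partial t} = -e^{-\psi} v\, (\Phi(F) - \tilde f(t)), \qquad u(0,\cdot) = u_0,
\end{equation*}
where $u_0$ parametrises $M_0$ as a graph over $\calS_0$. Short time existence for this problem, with a merely bounded continuous time-dependent forcing, follows from the standard linearisation and inverse function theorem argument in \cite[Chapter 2.5]{GerhCP}, since the principal part is unchanged from the time-independent case and the linearised operator remains an isomorphism between the relevant parabolic H\"older spaces. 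This yields $u_{\tilde f} \in H^{4+\alpha, \frac{4+\alpha}{2}}(\bar Q_{T})$ together with uniform estimates as long as $\|\tilde f\|_{X}$ stays bounded, and the lifespan may be taken uniform in $\tilde f$ on bounded subsets of $X$.

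Next, define the map
\begin{equation*}
\mathcal T : X \to X, \qquad \mathcal T(\tilde f)(t) := \frac{\int_{M_t[\tilde f]} H_k\, \Phi(F)\, d\mu_t}{\int_{M_t[\tilde f]} H_k\, d\mu_t},
\end{equation*}
where $M_t[\tilde f] := \textnormal{graph}\, u_{\tilde f}(t,\cdot)$. I would restrict $\mathcal T$ to the closed ball $B_R := \{\tilde f \in X : \|\tilde f\|_X \le R\}$ with $R := |f_k(0)| + 1$. For $T$ sufficiently small, the uniform estimates on the scalar problem ensure that $M_t[\tilde f]$ stays close to $M_0$ in $H^{2+\alpha}$, so both numerator and denominator of $\mathcal T(\tilde f)$ remain close to their values at $t=0$, giving $\mathcal T(B_R) \subset B_R$. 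The heart of the argument is the contraction estimate: taking $\tilde f_1, \tilde f_2 \in B_R$, subtracting the corresponding scalar equations and applying parabolic Schauder estimates to $w := u_{\tilde f_1} - u_{\tilde f_2}$ yields $\|w\|_{H^{4+\alpha,\frac{4+\alpha}{2}}} \le C\, T^\gamma \|\tilde f_1 - \tilde f_2\|_X$ for some $\gamma > 0$, and the integrands defining $\mathcal T$ depend locally Lipschitz continuously on $u$ up to its second derivatives. Hence $\mathcal T$ is a contraction on $B_R$ for $\eps := T$ small enough, and Banach's fixed point theorem produces $f \in B_R$ with $\mathcal T(f) = f$; the corresponding $u_f$ solves the scalar equation \eqref{partialU} with the correct nonlocal term. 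The embedding $x$ is then reconstructed from $u_f$ by integrating the tangential ODE system, cf.\ \cite[Theorem 2.5.19]{GerhCP}, giving $x \in H^{4+\alpha,\frac{4+\alpha}{2}}(\bar Q_\eps)$.

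The main obstacle is the contraction estimate for $\mathcal T$, which requires quantitative Lipschitz dependence of $u_{\tilde f}$ on $\tilde f$ in parabolic H\"older norms. The standard remedy is to linearise around the initial data $u_0$, derive a bound for the resolvent of the linearised parabolic operator in $H^{4+\alpha,\frac{4+\alpha}{2}}$, and transfer this via the implicit function theorem, with the decisive smallness factor coming from integrating the difference $\tilde f_1 - \tilde f_2$ over the short time interval $[0,\eps]$.
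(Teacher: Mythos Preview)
Your overall plan---reduce to the scalar graph equation, freeze the nonlocal term, solve the resulting local parabolic problem, and close a fixed point---matches the paper's route.  The paper, however, carries this out with the \emph{Schauder} fixed point theorem on the closed convex set
\[
M_{\eps,\delta}=\{h\in C^{1,\alpha/2}([0,\eps]):\|h-f(0)\|_{1,\alpha/2}\le\delta\},
\]
using that the uniform $H^{4+\alpha,(4+\alpha)/2}$ bound on $u_h$ forces $Th\in C^{1,(1+\alpha)/2}$, so $T$ is compact.  No contraction estimate is needed.  You instead attempt Banach's fixed point theorem in $X=C^0([0,T])$.

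There is a genuine gap in your choice of $X$.  With $\tilde f$ merely continuous in time, the forcing term $g(x,u,Du)\tilde f(t)$ lies only in $C^0_t$, not in $H^{\alpha,\alpha/2}$, so the inverse function theorem argument of \cite[Chapter~2.5]{GerhCP} does \emph{not} apply as stated: that argument maps $H^{2+\beta,(2+\beta)/2}\to H^{\beta,\beta/2}$ and needs the right-hand side in the latter space.  Likewise the bootstrap from $H^{2+\beta}$ to $H^{4+\alpha}$ via parabolic Schauder estimates requires H\"older regularity in time of the lower-order data.  This is exactly why the paper works in $C^{1,\alpha/2}$ rather than $C^0$.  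Your contraction scheme can be salvaged by taking $X=C^{0,\alpha/2}([0,T])$ (or better), but then you must also verify that the smallness factor $T^\gamma$ survives in the H\"older seminorm of $\mathcal T(\tilde f_1)-\mathcal T(\tilde f_2)$, which is more delicate than the $C^0$ estimate you sketch.  The paper's compactness argument sidesteps this entirely: once the uniform $H^{4+\alpha}$ bound on $u_h$ is in hand, the improved time regularity of $Th$ is automatic, and Schauder's theorem does the rest.
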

\begin{proof}
Let $M_0 :=$ graph $u_0$, then we will show the existence of a solution $u\in H^{4+\alpha, \frac{4+\alpha}{2}}( \bar{Q}_\eps)$ to the equation 
\begin{equation}
\label{STf}
\begin{split}
	&\frac{\partial u}{\partial t} + G(x, u, Du, D^2u) + g(x, u, Du) f(t) \equiv \frac{\partial u}{\partial t} + e^{-\psi}v (\Phi - f) = 0,\\
	&u(0) = u_0,
\end{split}
\end{equation}
on a cylinder $Q_\eps := [0, \eps) \times \mathcal{S}_0$ for a small $\eps > 0$. 

$G$ is defined and elliptic for functions $u$ belonging to an open set $\Lambda \subset C^2(\mathcal{S}_0)$, which corresponds to the hypersurfaces being \begin{em}admissible\end{em}:
\begin{equation}
	G^{ij}(x, u, Du, D^2u) < 0.
\end{equation}

	Once the existence for the scalar equation is shown, the arguments in \cite[Chapter 2.5]{GerhCP} can be applied to yield the short-time existence for the parabolic system \eqref{floweq}.

First of all we note, that there exist $\eps_0, \delta > 0$, such that the modified problem
\begin{equation}
\label{STh}
\begin{split}
	&\frac{\partial u}{\partial t} + G(x, u, Du, D^2u) + g(x, u, Du) h(t) = 0,\\
	&u(0) = u_0,
\end{split}
\end{equation}
where we have substituted $f$ by a function $h \in C^{1,\frac{\alpha}{2}}([0, \eps_0])$ and $h$ satisfies 
\begin{equation}
	||h - f(0)||_{1, \frac{\alpha}{2}, \bar{Q}_{\eps_0}} \leq \delta,
\end{equation}
has a unique solution $u \in H^{4+\alpha, \frac{4+\alpha}{2}}(\bar{Q}_{\eps_0})$ with a uniform bound
\begin{equation}
	||u||_{4+\alpha,\frac{4+\alpha}{2}, \bar{Q}_{\eps_0}} \leq c = c(||u_0||_{4+\alpha}, \eps_0, \delta, f(0)).
\end{equation}

In view of the standard parabolic estimates, see \cite[Theorem 2.5.9]{GerhCP}, it is sufficient to show the existence of a solution $u \in H^{2+\beta, \frac{2+\beta}{2}}(\bar{Q}_{\eps_0})$ to \eqref{STh} for some $0 <\beta <\alpha$ and the uniform bound in the corresponding norm. 

The existence and the necessary estimate is shown using the inverse function theorem in a similar manner as in \cite[Chapter 2.5]{GerhCP}, but using the operator
\begin{equation}
	\Psi(u, h) = (\dot{u} + G(x, u, Du, D^2u) + g(x, u, Du)h(t), u(0), h),
\end{equation}
which is well defined in an open subset of $H^{2+\beta, \frac{2+\beta}{2}}(\bar{Q}_\eps) \times C^{1, \frac{\alpha}{2}}([0, \eps])$ 
with image in $(H^{\beta, \frac{\beta}{2}}(\bar{Q}_\eps) \times H^{2+\beta}(\mathcal{S}_0)) \times C^{1, \frac{\alpha}{2}}([0, \eps])$.

We remark that the uniqueness of the solution to the modified problem follows as in the time-independent case by the parabolic maximum principle.\\

To prove short time existence for the problem \eqref{STf}, define the following closed and convex set:
\begin{equation}
	M_{\eps, \delta} := \{h \in C^{1,\frac{\alpha}{2}}([0, \eps]): ||h - f(0)||_{1, \frac{\alpha}{2}} \leq \delta\}.
\end{equation}
	For $h \in M_{\eps, \delta}$, $0< \eps < \eps_0$, $\delta$ as above, denote by $u_h$ a solution to \eqref{STh} and set
\begin{equation}
\begin{split}
T: &M_{\eps, \delta} \rightarrow M_{\eps,\delta},\\
&h \mapsto \frac{\int_{M_t}{\Phi H_k \,\mathrm{d\mu_t}}}{\int_{M_t}{H_k \,\mathrm{d\mu_t}}},
\end{split}
\end{equation}
where the quantities on the right hand side are those belonging to the solution $u_h$ to the problem \eqref{STh}. We will show that in fact $T$ maps $M_{\eps,\delta}$ into itself if $\eps$ is small enough and furthermore $T$ is a compact map, hence maps bounded to precompact sets. The existence of a solution to \eqref{STf} then follows from the Schauder fixed-point theorem.

The essential fact to prove this, is that we have uniform bounds on $u_h$ in $H^{4+\alpha, \frac{4+\alpha}{2}}(\bar{Q}_\eps)$. It follows that $Th$ is uniformly bounded in $C^{1, \frac{1+\alpha}{2}}([0, \eps])$, i.e. there exists a constant $c_0$ independent of $h \in M_{\eps,\delta}$, such that
\begin{equation}
	||Th||_{1, \frac{1+\alpha}{2}, [0,\eps]} \leq c_0.
\end{equation}
 This can be shown by differentiating $Th$ once with respect to $t$, applying partial integration on terms of the form $\dot{u}_{ij}$ and then reminding the definition of the parabolic H\"older spaces. The only critical terms are then of the form $\varphi(x, u, Du, D^2u)^{ijkl} u_{ijk} \dot{u}_l$, hence the claimed H\"older exponential.

The boundedness of $Th$ in $C^{1, \frac{1+\alpha}{2}}([0, \eps])$ implies the compactness of $T$ and by a simple argument it also shows, that $T$ maps $M_{\eps,\delta}$ into itself for small $\eps$:
\begin{equation}
	\big|\frac{d}{dt}Th(t) - \frac{d}{dt}Th(t')\big| \leq  c_0 |t-t'|^{\frac{1+\alpha}{2}} 
	\leq c_0 \eps^{\frac{1}{2}} |t-t'|^{\frac{\alpha}{2}} \leq \delta |t-t'|^{\frac{\alpha}{2}}.
\end{equation}
The $C^0$-norm can be estimated in the same way by noting that $Th(0) = f(0)$ and for the $C^1$-norm we can use the interpolation inequality \eqref{interpolCmalpha}. This completes the proof of the short time existence.
\end{proof}

We conclude this section by showing that the solution is unique, where we remark that this can not be shown as usual by using the maximum principle, in view of the presence of a global term. One rather has to use the idea of uniqueness for weak solutions.

\begin{Proposition}
	The solution to the scalar flow equation is unique in the class $H^{4+\alpha, \frac{4+\alpha}{2}}$.
\end{Proposition}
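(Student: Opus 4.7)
The obstacle to the classical maximum-principle argument is the nonlocal global term $f_k(t)$, which couples two putative solutions in a way that prevents a direct pointwise comparison. Following the ``weak-solution'' philosophy alluded to by the author, my plan is to exploit the $H^{4+\alpha}$-regularity of both solutions in order to rewrite the difference of their global terms as a linear functional of the difference $w = u_1 - u_2$ that is bounded on $L^\infty(\mathcal{S}_0)$; this reduces uniqueness to a one-line Gr\"onwall argument.

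Concretely, let $u_1, u_2 \in H^{4+\alpha, (4+\alpha)/2}(\bar{Q}_\eps)$ be two solutions of \eqref{STf} with $u_1(0) = u_2(0) = u_0$ and set $w = u_1 - u_2$. Subtracting the two equations and applying the mean value theorem to $G$ and to the factor $g$, I obtain a linear, uniformly parabolic equation
\begin{equation*}
	\partial_t w + a^{ij}(t,x)\, w_{ij} + b^i(t,x)\, w_i + c(t,x)\, w + \phi(t,x)\, J(t) = 0,\qquad w(0,\cdot) \equiv 0,
\end{equation*}
whose coefficients lie in $H^{\alpha, \alpha/2}$ thanks to $u_1, u_2 \in H^{4+\alpha, (4+\alpha)/2}$, and whose nonlocal forcing is
\begin{equation*}
	J(t) := f_k[u_1](t) - f_k[u_2](t).
\end{equation*}

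The heart of the argument is to exhibit $J(t)$ as a bounded linear functional of $w(t, \cdot)$ on $L^\infty(\mathcal{S}_0)$. I write $f_k[u] = N[u]/D[u]$ with $N[u] = \int_{M_u} H_k \Phi(F)\, d\mu$ and $D[u] = \int_{M_u} H_k\, d\mu$; the fundamental theorem of calculus along the segment $u_\theta = u_2 + \theta w$ then expresses $J(t)$ as a $\theta$-average of the linearizations $dN[u_\theta](w)$ and $dD[u_\theta](w)$. Each of these linearizations has the form
\begin{equation*}
	\int_{\mathcal{S}_0}\bigl(A_\theta\, w + B_\theta^i\, w_i + C_\theta^{ij}\, w_{ij}\bigr)\, dx,
\end{equation*}
the coefficients $A_\theta, B_\theta^i, C_\theta^{ij}$ being polynomial expressions in $u_\theta, Du_\theta, D^2 u_\theta$, hence of class $C^{2+\alpha}$ uniformly in $\theta$. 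Integrating by parts once against $B_\theta^i$ and twice against $C_\theta^{ij}$ on the closed manifold $\mathcal{S}_0$ eliminates every derivative of $w$ and produces a representation
\begin{equation*}
	J(t) = \int_{\mathcal{S}_0} K(t,x)\, w(t,x)\, dx, \qquad \|K\|_{L^\infty([0, T^*)\times \mathcal{S}_0)} \le c_0,
\end{equation*}
so that in particular $|J(t)| \le c_0\,\|w(t, \cdot)\|_{L^\infty(\mathcal{S}_0)}$.

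With this representation in hand, I absorb the zeroth-order term by replacing $w$ with $e^{-\Lambda t} w$ for large $\Lambda$ and apply the parabolic maximum principle to the resulting equation with zero initial datum to get
\begin{equation*}
	\|w(t, \cdot)\|_{L^\infty(\mathcal{S}_0)} \le C \int_0^t |J(s)|\, ds \le C c_0 \int_0^t \|w(s, \cdot)\|_{L^\infty(\mathcal{S}_0)}\, ds,
\end{equation*}
whence $w \equiv 0$ on $[0, T^*) \times \mathcal{S}_0$ by Gr\"onwall's inequality, proving uniqueness. The main obstacle is the integration-by-parts step: \emph{a priori} the forcing $J$ involves $D^2 w$, and only after one extracts the divergence structure of the linearizations $dN$ and $dD$ does the estimate close in the weakest norm. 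The two-derivative gain in the regularity of $u_1, u_2 \in H^{4+\alpha}$ (two derivatives more than those of $w$ itself) is precisely what is needed to justify that the rearrangement produces coefficients $K \in L^\infty$, which is why uniqueness is stated in this class.
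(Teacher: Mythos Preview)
Your argument is correct and reaches the same conclusion, but by a route different from the paper's. Both proofs begin identically: linearize along the segment $u_\tau = \tau u_1 + (1-\tau)u_2$ to obtain a linear parabolic equation for $w = u_1 - u_2$ with a nonlocal forcing of the form $d(t,x)\int_{\mathcal S_0}(\tilde a^{ij}w_{ij}+\tilde b^i w_i+\tilde c\,w)\,d\sigma$. From here the two arguments diverge.

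The paper runs an $L^2$ energy estimate: multiply by $2w$, integrate over $\mathcal S_0$, and use the ellipticity of the principal part to produce a good term $-c_0\|Dw\|_2^2$; the nonlocal piece, after \emph{one} integration by parts, is controlled by $\eps\|Dw\|_2^2 + C_\eps\|w\|_2^2$ via Cauchy--Schwarz and Young, and choosing $\eps<c_0$ closes the estimate $\frac{d}{dt}\|w\|_2^2 \le c\|w\|_2^2$, whence Gr\"onwall. Your approach instead stays in $L^\infty$: you integrate by parts \emph{twice} inside the nonlocal term to strip all derivatives off $w$, obtaining $|J(t)|\le c_0\|w(t,\cdot)\|_\infty$, and then the classical maximum principle (with the forcing treated as an inhomogeneity) plus Gr\"onwall finishes.

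What each buys: the paper's $L^2$ method is exactly the ``weak-solution uniqueness'' it alludes to, and is slightly more economical---one integration by parts needs only $\tilde a^{ij}\in C^1$ in $x$, hence $F\in C^2$ and $u\in C^3$ suffice. Your double integration by parts requires $(C_\theta^{ij})_{;ij}\in L^\infty$, which via the chain rule asks for $F\in C^3$ (one derivative more than the minimal hypothesis $m\ge 2$) in addition to $u\in C^4$. Relatedly, your assertion that the $C_\theta^{ij}$ are ``polynomial expressions'' in $u_\theta, Du_\theta, D^2u_\theta$ is not literally true---for $F\in(K^*)$ or for $\Phi=\log$ the dependence on $D^2u_\theta$ is genuinely nonlinear---though smoothness of that dependence is all you actually use. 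Under the paper's blanket ``for simplicity all functions are supposed to be smooth'' both approaches go through; your $L^\infty$ route is a legitimate alternative, just marginally more demanding on the regularity of the curvature function.
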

\begin{proof}
	Let $u, \tilde{u}$ be two solutions of class $H^{4+\alpha, \frac{4+\alpha}{2}}$ in $Q_\eps = [0,\eps) \times \mathcal{S}_0$ of an equation of the form
	\begin{equation}
	\begin{split}
		&\dot{u} + G(x,u,Du, D^2 u) + g(x, u, Du)\frac{\int_{\mathcal{S}_0}{B(x,u,Du, D^2u)\,\mathrm{d\si}}}{\int_{\mathcal{S}_0}{b(x,u,Du, D^2 u)}\,\mathrm{d\si}} = 0,\\
		&u(0) = u_0,
	\end{split}
	\end{equation}
	where for simplicity all functions are supposed to be smooth (the regularity we imposed is also sufficient) and $b > 0$. This equation corresponds to \eqref{totalU} and it suffices to show uniqueness to this equation. Let 
	\begin{equation}
		\varphi := u - \tilde{u}.
	\end{equation} 
	If $\eps$ is sufficiently small, then the convex combination
	\begin{equation}
		u_\tau = \tau u + (1-\tau)\tilde{u}, \quad \tau \in [0,1],
	\end{equation}
	belongs to the open set $\Lambda$, see the above proof for the notation, hence $G$ is well defined for the convex combination. By using the main theorem of calculus we deduce that $\varphi$ satisfies the following equation
	\begin{equation}
		\dot{\varphi} = a^{ij}\varphi_{ij} + b^i \varphi_i + c \varphi + d\int_{\mathcal{S}_0}{\left(\tilde{a}^{ij}\varphi_{ij} + \tilde{b}^i\varphi_i + \tilde{c}\varphi\right) \mathrm{d\si}},
	\end{equation}
	where all the coefficients have bounded derivatives and $a^{ij}$ is uniformly elliptic with ellipticity constant $c_0$. We multiply this equation by $2\varphi$, then we integrate over $\mathcal{S}_0$ and obtain, after using partial integration, the binomial formula and the Schwartz-inequality,
	\begin{equation}
	\begin{split}
		\frac{d}{dt} ||\varphi||_2^2 \equiv \frac{d}{dt} \int_{\mathcal{S}_0}{\varphi^2 \,\mathrm{d\si}} \leq c \,||\varphi||_2^2 + \eps ||D\varphi||_2^2 - c_0 ||D\varphi||_2^2,
	\end{split}
	\end{equation}
	where $\eps >0$ is the constant chosen in the binomial formula yielding the inequality
	\begin{equation}
		a \, b \leq \eps a^2 + \frac{b^2}{4\eps},
	\end{equation} 
	and $c=c(\eps, ||u||_{4+\alpha, \frac{4+\alpha}{2}}, ||\tilde{u}||_{4+\alpha, \frac{4+\alpha}{2}})$.
	Choosing $\eps < c_0$ we deduce that there holds for $h = h(t) = ||\varphi||_2^2$
	\begin{equation}
	\begin{split}
		&\dot h \leq c\, h,\\
		&h(0) = 0.
		\end{split} 
	\end{equation}
	A comparison principle for ordinary differential equations implies $h \equiv 0$. In view of the continuity of $\varphi$, this yields the desired uniqueness.
\end{proof}

\bibliographystyle{plain}
\bibliography{Bibliography}

\end{document}